
\documentclass[11pt, a4paper]{amsart}

\usepackage{amsmath}
\usepackage{amstext}
\usepackage{amssymb}
\usepackage{amsthm}
\usepackage{mathabx}
\usepackage{enumerate}
\usepackage{enumitem}
\usepackage{mathrsfs}
\usepackage[all]{xy}
\usepackage{datetime}

\usepackage{stackengine}
\stackMath
\newcommand\bfdelta{%
 \def\useanchorwidth{T}%
    \stackunder[1pt]{\delta}{\scriptscriptstyle\sim}%
}



\usepackage[svgnames]{xcolor}
\usepackage{tikz}

\usepackage[textsize=footnotesize,color=yellow, bordercolor=white]{todonotes}

\pgfdeclarelayer{edgelayer}
\pgfdeclarelayer{nodelayer}
\pgfsetlayers{edgelayer,nodelayer,main}

\tikzstyle{none}=[inner sep=0pt]



\usepackage[pdfauthor={Karen Bakke Haga, David Schrittesser, Asger Toernquist},
            pdftitle={Maximal almost disjoint families, Determinacy, and Forcing},
            pdfproducer={Latex with hyperref}]{hyperref}



\makeatletter
\def\imod#1{\allowbreak\mkern10mu({\operator@font mod}\,\,#1)}
\makeatother


\newtheorem{theorem}{Theorem}[section]

\newtheorem{mprop}[theorem]{Main Proposition}
\newtheorem{lemma}[theorem]{Lemma}
\newtheorem{blemma}[theorem]{The Branch Lemma}
\newtheorem{claim}[theorem]{Claim}
\newtheorem{corollary}[theorem]{Corollary}

\theoremstyle{definition}
\newtheorem{definition}[theorem]{Definition}

\newtheorem{question}[theorem]{Question}
\theoremstyle{remark}
\newtheorem{remark}[theorem]{Remark}
\newtheorem{assumption}[theorem]{Assumption}

\newtheorem{fact}[theorem]{Fact}
\newtheorem{facts}[theorem]{Facts}
\theoremstyle{remark}

\newtheorem{notation}[theorem]{Notation}

\numberwithin{equation}{section}


\hyphenation{na-tu-ral}

\hyphenation{other-wi-se}

\hyphenation{e-qui-va-lence}


    \DeclareMathOperator{\lh}{lh}

    \DeclareMathOperator{\dom}{dom}

    \DeclareMathOperator{\I}{I}

    \DeclareMathOperator{\fin}{Fin}


\providecommand{\conc}{ \mathbin{{}^\frown}}

    \newcommand{\ON}{\mathbb{ON}}

    \newcommand{\forces}{\Vdash}

    \newcommand{\ZF}{\mathrm{ZF}}
        \newcommand{\ZFC}{\mathrm{ZFC}}

\def\R{{\mathbb R}}

\def\P{{\mathbb P}}

\def\eL{{\mathbf{L}}}
\def\AD{{\ensuremath{\mathbf{AD}}}}
\def\PD{{\ensuremath{\mathbf{PD}}}}
\newcommand{\initseg}{\sqsubseteq}

\newcommand{\setdef}{\;\vert\;}
\providecommand{\res}{\mathbin{\upharpoonright}}

\DeclareMathOperator{\powerset}{\mathcal{P}}

\newcommand{\notforces}{\nVdash}
\def\M{{\mathbf M}}
\def\A{{\mathcal A}}
\def\I{{\mathcal I}}





\title[MAD families, PD, and forcing]{Maximal almost disjoint families, Determinacy, and Forcing}


\author{Karen Bakke Haga, David Schrittesser, Asger T\"ornquist}

\address{Department of Mathematical Sciences, University of Copenhagen, Universitetsparken 5, 2100 Copenhagen, Denmark}

\address{Kurt G\"odel Research Center, Universit\"at Wien, W\"ahringerstra\ss e 26, 1090 Vienna, Austria}

\address{Department of Mathematical Sciences, University of Copenhagen, Universitetsparken 5, 2100 Copenhagen, Denmark}

\email{karenbhaga@math.ku.dk}

\email{david@logic.univie.ac.at}

\email{asgert@math.ku.dk}


\subjclass[2010]{03E05, 03E15, 03E45, 03E60}

\date{\today}

\keywords{Definability, maximal almost disjoint families, determinacy, Mathias forcing, Fubini products, $F_\sigma$ ideals.}

\begin{document}

\begin{abstract}
We study the notion of $\mathcal J$-MAD families where $\mathcal J$ is a Borel ideal on $\omega$. We show that if $\mathcal J$ is an arbitrary $F_\sigma$ ideal, or is any finite or countably iterated Fubini product of $F_\sigma$ ideals, then there are no analytic infinite $\mathcal J$-MAD families, and assuming Projective Determinacy there are no infinite projective $\mathcal J$-MAD families; and under the full Axiom of Determinacy + $V=\eL(\R)$ there are no infinite $\mathcal J$-mad families. These results apply in particular when $\mathcal J$ is the ideal of finite sets $\fin$, which corresponds to the classical notion of MAD families. The proofs combine ideas from invariant descriptive set theory and forcing.
\end{abstract}

\maketitle



\section{Introduction}\label{s.intro}

\textbf{(A)} Let $\fin$ denote the ideal of finite subsets of $\omega$. Classically, a family $\mathcal A \subseteq \powerset(\omega)$ is called an \emph{almost disjoint family} (short: \emph{AD} \emph{family}) if the family $\mathcal A$ consists of infinite subsets of $\omega$ and any two distinct $x,y\in\mathcal A$ have finite intersection, that is $x\cap y\in \fin$. A \emph{maximal almost disjoint} (short: \emph{MAD}) \emph{family} is an AD family which is maximal (with respect to $\subseteq$) among AD families. Finite MAD families exist trivially, and using Choice (e.g., Zorn's Lemma), it is routine to show that there are infinite MAD families. \emph{Below we always assume, to avoid trivialities, that MAD families in question are infinite}.

\medskip

The study of the definability of (infinite) MAD families has a long history, but the area has in recent years seen a remarkable blossoming with many new results. The fundamental results in the area go back to Mathias' famous paper \cite{mathias}, where it was shown that there are no analytic MAD families, and further proved that assuming there is a Mahlo cardinal, $\ZF$+``there are no MAD families'' is consistent. Much more recently, T\"ornquist \cite{toernquist} showed that there are no MAD families in Solovay's model (from~\cite{solovay}), thus weakening the large cardinal assumption. Horowitz and Shelah \cite{horowitz-shelah} then removed the large cardinal assumption completely, showing that one can construct a model of $\ZF$ without MAD families just assuming $\ZF$ is consistent.

\medskip

The leading idea of the present paper is to use ideas from \emph{invariant descriptive set theory}, i.e. the descriptive set theory of definable equivalence relations and invariance properties, in combination with Mathias-like forcings and absoluteness to give uniform proofs of the non-existence of (definable) MAD families in various settings. This approach in turn allows us to prove vastly more general results about the definability of \emph{$\mathcal J$-MAD families}, where the ideal $\fin$ is replaced by an ideal $\mathcal J$ on $\omega$ for $\mathcal J$ in a large class of Borel ideals. However, we first give transparent and surprisingly uniform proofs of the following:

\begin{theorem}\label{t.intro.1D}
\ 

\begin{enumerate}
\item There are no analytic MAD families.
\item Under $\ZF$ + Dependent Choice + Projective Determinacy, there are no projective infinite MAD families.
\item Under $\ZF$ + Determinacy + $V=\eL(\R)$ there are no infinite MAD families.
\end{enumerate}

\end{theorem}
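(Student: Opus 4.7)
The plan is to prove all three parts by a uniform strategy: given an AD family $\mathcal{A}$ of the appropriate definability class, construct in the relevant model an infinite $x \in [\omega]^\omega$ almost disjoint from every $A \in \mathcal{A}$, directly contradicting maximality. The three cases differ only in the regularity properties of definable sets of reals available: the Ramsey property of analytic sets (Silver's theorem) for (1), the Ramsey property of projective sets under $\PD$ for (2), and the full Ramsey property of sets in $\eL(\R)$ under $\AD$ for (3).

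First I would exploit the Ramsey property of $\mathcal{A}$, viewed as a subset of $[\omega]^\omega$, in the Ellentuck topology. In each setting this yields $X \in [\omega]^\omega$ with either $[X]^\omega \subseteq \mathcal{A}$ or $[X]^\omega \cap \mathcal{A} = \emptyset$. The first alternative is incompatible with almost disjointness (for instance $X$ and $X \setminus \{\min X\}$ both lie in $[X]^\omega$ and share an infinite intersection), so $[X]^\omega \cap \mathcal{A} = \emptyset$. From $X$ I would then extract the desired witness $x$ via Mathias-forcing machinery: associate to $\mathcal{A}$ the ideal $\mathcal{I}(\mathcal{A})$ generated by $\mathcal{A} \cup \fin$, consider the coideal $\mathcal{H} = [\omega]^\omega \setminus \mathcal{I}(\mathcal{A})$, and argue, using MAD-ness together with the definability of $\mathcal{A}$, that $\mathcal{H}$ behaves like a semiselective ``happy family''. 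A relativised Ramsey theorem inside $\mathcal{H}$ then produces $x \in \mathcal{H}$ almost disjoint from every $A \in \mathcal{A}$.

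The absoluteness component bridges the gap between ``the Mathias forcing associated to $\mathcal{A}$ adds a suitable $x$'' and ``such an $x$ already exists in the given model''. For (1), Shoenfield-style absoluteness applied to the $\Pi^1_2$ statement ``$\mathcal{A}$ is not maximal'' suffices. For (2), the analogous projective absoluteness under $\PD$ plays this role. For (3), one invokes the absoluteness between $\eL(\R)$ and its Mathias-type extensions available in the Solovay-style analysis under $\AD + V = \eL(\R)$; here the assumption $V = \eL(\R)$ ensures that any alleged MAD family is definable from a real and so falls within the scope of the argument.

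The hardest part will be establishing that the coideal $\mathcal{H}$ is a happy family at the relevant definability level, in particular verifying the selectivity/diagonalisation property: given a descending chain $X_0 \supseteq X_1 \supseteq \cdots$ in $\mathcal{H}$, one must find $Y \in \mathcal{H}$ with $Y \subseteq^* X_n$ for all $n$. This combines MAD-ness with the definability of $\mathcal{A}$ in a non-trivial way and is the sort of statement one expects to package as a \emph{Branch Lemma}-style ingredient. Once this is in place, the Ramsey/Mathias machinery delivers the contradiction uniformly in all three settings, with only the underlying regularity hypothesis (Silver, $\PD$, or $\AD$) changing from case to case.
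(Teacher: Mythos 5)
Your outline correctly identifies the three ingredients — Mathias-type forcing relative to the ideal $\I(\A)$ generated by $\A\cup\fin$, a diagonalisation/selectivity property of the coideal, and an absoluteness step — and the diagonalisation you flag is indeed proved (Lemma~\ref{diagonalization.1D}). But as written the proposal has a genuine gap at exactly the point that carries all the weight: you never address why the object $x$ you construct avoids being caught by \emph{new} elements of $\A$ that can appear once you pass to the extension where the Mathias generic lives. When $\A=\pi[T]$ is a Suslin set, $\pi[T]$ grows in $V[G]$; the Mathias generic $x_G$ is trivially almost disjoint from each $A\in\A\cap V$ (since those lie in $\I$), but nothing in your outline explains why it is almost disjoint from $A\in\pi[T]^{V[G]}\setminus V$. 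This is precisely the content of Main Proposition~\ref{mprop.1D}, and the paper's mechanism for proving it is the invariant subtree $T^{x_G}$ of Definition~\ref{d.1D.T^x} together with the Branch Lemma~\ref{l.1D.branch} (the projection of $T^{x_G}$ is a singleton) and the $\mathrm{HOD}$-type argument of Lemma~\ref{HVDinV} (that singleton is definable from $[x_G]_{E_0}$, hence lies in $V$, hence in $\I$). Your proposal has no analogue of this device, and the diagonalisation lemma you single out as the ``hardest part'' does not in fact engage the definability of $\A$ at all — it is a routine a.d.-family construction.

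Two further issues. First, the Ellentuck/Galvin--Prikry step in your second paragraph is a detour that does not connect to the rest: applying the Ramsey property to $\A\subseteq[\omega]^\omega$ gives $X$ with $[X]^\omega\cap\A=\emptyset$, but this only says no infinite subset of $X$ \emph{belongs} to $\A$, not that $X$ (or any $x\subseteq X$) is almost disjoint from the members of $\A$ — and your homogeneous $X$ is never used afterwards. Second, for parts (2) and (3) the phrase ``projective absoluteness under $\PD$'' / ``absoluteness between $\eL(\R)$ and its Mathias-type extensions'' glosses over a real obstacle: you would need absoluteness between $V$ and a genuine forcing extension $V[G]$ of $V$, which is not available from $\PD$ (resp.\ $\AD$) alone at the required level. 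The paper sidesteps this entirely by working with inner models: under $\PD$ one produces (Lemma~\ref{l.innnermodeltheory.PD}) a $\ZFC$ model $M$ containing a tree $T$ with $\pi[T]=\A$ such that $\powerset(\powerset(\omega))^M$ is \emph{countable in $V$}, so a full $\M^\I$-generic over $M$ already exists in $V$, and the only absoluteness needed is the tree-absoluteness of Lemma~\ref{l.absoluteness}(\ref{catches}) between models of $\ZFC$. The $\AD+V=\eL(\R)$ case additionally uses Solovay's Basis Theorem (Fact~\ref{f.innnermodeltheory.reflection}) to reduce to a $\mathbf{\Sigma}^2_1$ family and then Lemma~\ref{l.innnermodeltheory.AD}. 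Without these inner-model lemmas, the absoluteness part of your argument does not close.
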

The first result is originally due to Mathias \cite{mathias}.
The remaining two results were also shown independently by Neeman and Norwood in \cite{neeman-norwood} using somewhat different methods.

We mention that two of the present authors have very recently found a proof (unpublished) that if every (resp., every projective) set is completely Ramsey and Ellentuck-comeager uniformization (resp., for projective relations) holds, then there are no (projective) MAD families.

\medskip

\textbf{(B)} 
We now discuss the notion of $\mathcal J$-MAD families and the generalization of Theorem \ref{t.intro.1D} to this setting. 

Given an ideal $\mathcal J$ on a countable set $S$, write $\mathcal J^+$ for $\powerset(S)\setminus \mathcal J$. 
One may define \emph{$\mathcal J$-almost disjoint} (short \emph{$\mathcal{J}$-AD}) sets in $\mathcal J^+$ and  \emph{$\mathcal J$-maximal almost disjoint} (short \emph{$\mathcal{J}$-MAD}) \emph{families} as subsets of $\powerset(\mathcal J^+)$ in the obvious manner (see Section~\ref{s.notation}).
MAD families are of course the special case $\mathcal J=\fin$, where $\fin$ denotes the ideal of finite sets (i.e., the Fr\'echet ideal).

\medskip

The motivating question for the results we now describe is:

\begin{question}
For which Borel ideals $\mathcal J$ on $\omega$ can we generalize results about MAD families to the case of $\mathcal J$-MAD families?
\end{question}

\medskip

As a first step we consider $F_\sigma$ ideals. It is well known that every $F_\sigma$ ideal $\mathcal J$ is given as the finite part of a lower semi-continuous (short: lsc) submeasure $\phi\colon \powerset(\omega) \to [0,\infty]$ as follows:
\[
\mathcal J = \fin(\phi) = \{J\subseteq \omega\mid\phi(J)<\omega\}.
\]
(See Section~\ref{s.notation} for a complete definition of these notions and \cite{mazur}[1.2] for a proof of the claim.) We will see in Section~\ref{s.1D} that Theorem~\ref{t.intro.1D} generalizes to $\mathcal J$-MAD families where $\mathcal J$ is any $F_\sigma$ ideal.

\medskip

We can reach Borel ideals that are more complex, in the sense of belonging to higher parts of the Borel hierarchy, using \emph{Fubini sums and products}.
If for each $k\in \omega$ we are given an ideal $\mathcal I_k$ on a countable set $S_k$, and an ideal $\mathcal I$ on $\omega$, 
we form an ideal $\bigoplus_{\mathcal I} \mathcal I_k$ on $S=\bigsqcup_{k\in\omega} S_k$, called the \emph{Fubini sum of $(\I_k)_{k\in\omega}$ over $\I$} as follows:
\[
\bigoplus_{\mathcal I} \mathcal I_k = \{I\subseteq S \mid \{k\in\omega\mid I \cap S_k \notin \mathcal \I_k\}\in\mathcal I\}
\]
In Section~\ref{s.2D} we show that our methods apply in this generalized setting:
\begin{theorem}\label{t.intro.2D}
Let $\mathcal J =\bigoplus_{\mathcal I} \mathcal I_k$ where $\mathcal I$ and $\mathcal I_k$ for each $k\in\omega$ are $F_\sigma$ ideals on $\omega$.
\begin{enumerate}
\item\label{t.intro.2D.analytic} There are no analytic infinite $\mathcal J$-MAD families.
\item\label{t.intro.2D.PD} Under ZF + Dependent Choice + Projective Determinacy, there are no projective infinite $\mathcal J$-MAD families.
\item\label{t.intro.2D.AD} Under ZF + Determinacy + $V=\eL(\R)$ there are no infinite $\mathcal J$-MAD families.
\end{enumerate} 
\end{theorem}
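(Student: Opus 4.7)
The plan is to follow the template developed for $F_\sigma$ ideals in Section~\ref{s.1D}, replacing the single-level Mathias-type forcing used there with a two-level variant $\mathbb M_{\mathcal J}$ adapted to the Fubini structure. Writing $S = \bigsqcup_{k \in \omega} S_k$ for the underlying countable set, assume toward contradiction that $\mathcal A \subseteq \powerset(S)$ is an infinite $\mathcal J$-MAD family of the appropriate definability class. The goal is to force, over $V$ (for (1) and (2)) or over a suitable inner model containing $\mathcal A$ (for (3)), to add a generic real $x \in \mathcal J^+$ that is $\mathcal J$-almost disjoint from every element of $\mathcal A$; pulling this back via the relevant absoluteness principle then contradicts the maximality of $\mathcal A$.

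The forcing $\mathbb M_{\mathcal J}$ would have conditions $(s, T)$ with $s$ a finite subset of $S$ and $T \in \mathcal J^+$ with $s \cap T = \emptyset$, ordered so that $(s', T') \le (s, T)$ iff $s \sqsubseteq s'$, $s' \setminus s \subseteq T$, and $T' \subseteq T$. By the definition of $\mathcal J = \bigoplus_{\mathcal I} \mathcal I_k$, every $T \in \mathcal J^+$ is described by its outer support $K_T = \{k : T \cap S_k \in \mathcal I_k^+\} \in \mathcal I^+$ together with its fibers $T \cap S_k \in \mathcal I_k^+$ for $k \in K_T$. Thus $\mathbb M_{\mathcal J}$ can be viewed as combining a Mathias-type forcing on the outer $F_\sigma$ ideal $\mathcal I$ with, at each relevant index $k$, a Mathias-type forcing on the inner $F_\sigma$ ideal $\mathcal I_k$; the generic real is $x = \bigcup\{s : (s,T) \in G\}$, which should be forced to lie in $\mathcal J^+$.

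The main technical step is a Fubini-style diagonalization lemma: given a condition $(s, T)$ and a countable collection of targets $\{y_n : n \in \omega\} \subseteq \mathcal J^+$ arising from $\mathcal A$, produce $T' \subseteq T$ with $T' \in \mathcal J^+$ such that $T' \cap y_n \in \mathcal J$ for every $n$. The natural approach is a fusion argument that alternates two kinds of shrinking: at each stage, apply the inner $F_\sigma$ shrinking (available from Section~\ref{s.1D}) within selected fibers $T \cap S_k$ against the $\mathcal I_k$-positive traces $y_n \cap S_k$, while simultaneously applying the outer $F_\sigma$ shrinking to $K_T$ against those indices at which too many $y_n$ contribute $\mathcal I_k$-positive fibers. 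Using the lsc submeasures $\phi$ and $\phi_k$ defining $\mathcal I$ and $\mathcal I_k$, one must carefully budget the loss at each step of the fusion to ensure $\mathcal J$-positivity of the limit.

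Once the diagonalization lemma is in place, the three conclusions should follow exactly as in the $F_\sigma$ case: by Shoenfield absoluteness for analytic $\mathcal A$ in (1), by absoluteness between $V$ and an inner model with sufficiently many Woodin cardinals (granted by PD) for (2), and by $L(\mathbb R)$-absoluteness under $\AD$ for (3). The principal obstacle is the Fubini diagonalization lemma itself: coordinating the outer and inner shrinking so that $\mathcal J$-positivity survives is strictly more delicate than the single-level case, because shrinking a fiber to avoid one $y_n$ may interact unpredictably with the $\mathcal I$-side contribution of another, and the two submeasures must be balanced against each other. A diagonal enumeration of the targets together with geometrically decreasing budgets on $\phi$ and on each $\phi_k$ should resolve this, but the bookkeeping is the heart of the matter.
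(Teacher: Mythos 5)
Your proposal identifies the \emph{routine} part of the argument and calls it ``the heart of the matter,'' while omitting the genuinely hard part. Let me be concrete.

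The Fubini-style diagonalization you describe --- given a decreasing sequence from the positive sets, produce a pseudo-intersection still in the positive sets --- corresponds to Lemma~\ref{diagonalizing.A} in the paper, which is a fairly direct adaptation of the one-dimensional $\sigma$-closure argument (Lemma~\ref{diagonalization.1D}). It is not the bottleneck. More importantly, \textbf{it does not suffice}. The difficulty is \emph{not} that one must avoid countably many ground-model elements $y_n$ of $\mathcal A$; the difficulty is that after forcing, the $\kappa$-Suslin set $\pi[T]$ reinterprets, and the generic real $x_G$ could have $\mathcal J$-large intersection with some \emph{new} element $y\in\pi[T]^{V[G]}$ that does not exist in $V$ and cannot be enumerated by any diagonalization carried out in $V$. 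No amount of budgeting submeasures against a countable family of ground-model targets addresses this. The mechanism the paper uses to handle exactly this is the $\mathcal J$-invariant subtree $T^{x} = \{t\in T\mid(\exists y\in\pi[T_{[t]}])\, y\cap x\notin\mathcal J\}$ (Definition~\ref{d.2D.tree}), the Branch Lemma $\Vdash |\pi[T^{x_{\dot G}}]|\le 1$ (Lemma~\ref{l.2D.branch}), and a product-decomposition argument (Claim~\ref{c.inV.dim2}, using Lemma~\ref{l.2D.properties}(\ref{decomposition.2D})) showing the unique branch must already lie in $V$, hence in the ideal $\I$. None of these appear in your outline.

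Two further structural issues. First, your forcing conditions $(s,T)$ with $T\in\mathcal J^+$ are not relativized: the paper forces with $T\in\I^{++}$ where $\I$ is the ideal generated by $\mathcal A\cup\mathcal J$, which is what makes the density argument forcing $x_{\dot G}\cap A\in\mathcal J$ for $A\in\mathcal A$ (Lemma~\ref{l.2D.properties}(\ref{l.2D.properties.I})) go through. Second, your ordering ``$s\sqsubseteq s'$, $s'\setminus s\subseteq T$, $T'\subseteq T$'' does not track the Fubini structure; the paper's $\M^{\I}_2$ requires $a\sqsubseteq_2 a'$ fiber-by-fiber together with $\dom(a)\initseg\dom(A)$ and $\max(a(k))<\min(A(k))$ for $k\in\dom(a)$. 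This structure is what makes $\M^{\I}_2$ factor as $\M^k\times\M^{\I}_2(\le(\emptyset,(\omega\setminus k)\times\omega))$, and that factorization is indispensable both for the Branch Lemma (via the auxiliary poset $U$, the well-founded order $\Gamma$, and Lemma~\ref{l.force.intersection}) and for Claim~\ref{c.inV.dim2}. Without it the whole engine of the proof is unavailable. To repair the proposal you would need to: introduce the fiberwise-structured forcing $\M^{\I}_2$ relative to $\I$; define $T^x$; prove the two-dimensional Branch Lemma via the $\Gamma$-wellfoundedness and case analysis (Claim~\ref{c.alternative}); and derive the Main Proposition via the product decomposition. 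The diagonalization lemma you emphasize is one small ingredient (used inside Lemma~\ref{l.force.finite.bit}), not the core.
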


We note in passing that part (1) of Theorem \ref{t.intro.2D} in the special case of $\mathcal J=\fin\otimes\fin$, the first iteration of the Fr\'echet ideal, in itself answers a question that seems to have belonged to the folklore of the field for a long time. Here $\fin\otimes\fin$ is the Fubini sum of $\fin$ over $\fin$, that is, it consists of those $X \subseteq \omega\times\omega$ such that 
\[
\{n\in\omega\mid\{m\in\omega\mid (n,m)\in X\}\text{ is infinite}\}\text{ is finite}.
\]

\medskip

Yet more complex ideals are obtained by iterating Fubini products into the transfinite. Namely, 
given $\alpha<\omega_1$ and a sequence $\vec\phi$ of lsc submeasures of length $\alpha$, we will define in a natural manner
an ideal $\fin(\vec \phi)$ on a countable set $S$, by recursively applying Fubini products. (See Section~\ref{s.alphaD} for the detailed definition.)

One can do this in such a way that $\fin(\vec \phi)$ is $\Sigma^0_{\alpha+1}$ but not $\Sigma^0_\alpha$. A particular instance of this construction is the \emph{iterated Fr\'echet ideal} $\fin^\alpha$, obtained by iterating the Fubini product construction applied to the ideal $\fin$ transfinitely, and thus obtaining ``higher dimensional'' analogues $\fin^\alpha$ of $\fin\otimes\fin$.

\medskip

We shall see in Section~\ref{s.alphaD} that our methods apply even to the more general class of ideals described in the previous paragraph:

\begin{theorem}\label{t.intro.alphaD}
Let $\mathcal J =\fin(\vec \phi)$ be as defined in Section~\ref{s.alphaD}. 
\begin{enumerate}
\item There are no analytic $\mathcal J$-MAD families.
\item Under ZF + Dependent Choice + Projective Determinacy, there are no projective infinite $\mathcal J$-MAD families.
\item Under ZF + Determinacy + $V=\eL(\R)$ there are no infinite $\mathcal J$-MAD families.
\end{enumerate} 
\end{theorem}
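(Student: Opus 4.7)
The plan is to proceed by transfinite induction on $\alpha < \omega_1$, the length of the sequence $\vec\phi$ of lsc submeasures defining $\mathcal J = \fin(\vec\phi)$. The base case $\alpha = 1$ is exactly Theorem \ref{t.intro.1D}, and by associativity of the Fubini construction every finite successor case reduces to iterated applications of Theorem \ref{t.intro.2D}. The substantive work therefore lies in the transfinite inductive step, and in particular at countable limit ordinals.

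The strategy mirrors that of Theorems \ref{t.intro.1D} and \ref{t.intro.2D}. Assuming for contradiction that $\mathcal A$ is an infinite $\mathcal J$-MAD family in the appropriate definability class, I would introduce a Mathias-style forcing $\mathbb{M}_{\vec\phi}$ tailored to the ideal $\fin(\vec\phi)$. Its conditions should be pairs $(s,X)$ where $s$ is a finite stem in $\fin(\vec\phi)$ and $X \in \fin(\vec\phi)^+$ is a reservoir controlling permissible extensions, ordered so that the generic set $\dot Y$ lies in $\fin(\vec\phi)^+$. The crucial property to extract from $\mathbb{M}_{\vec\phi}$ is a diagonalization lemma: for any $Z \in \fin(\vec\phi)^+$ coded in the appropriate inner model, some condition forces $\dot Y \cap Z \in \fin(\vec\phi)$. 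Combined with absoluteness — Shoenfield/analytic absoluteness for (1), generic absoluteness from Projective Determinacy for (2), and absoluteness for $\eL(\R)$ under $\AD$ for (3) — this produces, in $V$, a set $Y \in \fin(\vec\phi)^+$ that is $\mathcal J$-almost disjoint from every element of $\mathcal A$, contradicting maximality.

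For the successor step $\alpha = \beta+1$, the ideal $\fin(\vec\phi)$ is a Fubini sum of ideals $\fin(\vec\phi\vrestrict\beta_k)$ for $\beta_k \leq \beta$ over an $F_\sigma$ ideal, and the forcing $\mathbb{M}_{\vec\phi}$ should factor as a two-step iteration of an ``outer index'' Mathias-type forcing whose fibers are copies of the inductive-hypothesis forcing. The properness, complexity, and diagonalization properties should transfer through this iteration precisely as in Section \ref{s.2D}.

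The main obstacle is the limit step. At a limit ordinal $\alpha$, $\fin(\vec\phi)$ is built by amalgamating all shorter initial segments of $\vec\phi$ into a single ideal on a countable disjoint union of the underlying sets, and the corresponding Mathias-type forcing must be set up so that: (i) its definitional complexity remains in the appropriate pointclass — Suslin at the analytic level, reasonably definable at the projective and $\eL(\R)$ levels — so that the absoluteness machinery of Sections \ref{s.1D} and \ref{s.2D} still applies; (ii) the diagonalization lemma continues to hold, which will likely require a fusion-style argument simultaneously controlling the generic set on every lower-level component (with the control weakening as one ascends in $\alpha$); and (iii) any real coding $\vec\phi$ is absorbed into the inner model over which we force, so that the inductive hypothesis is uniformly available across all initial segments. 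Once these technicalities are settled, the contradiction with maximality is obtained exactly as in the earlier sections.
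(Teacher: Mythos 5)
Your proposal takes a genuinely different route from the paper, and there are two problems with it.

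First, the reduction at successors is not available. You assert that ``by associativity of the Fubini construction every finite successor case reduces to iterated applications of Theorem~\ref{t.intro.2D}'', but Theorem~\ref{t.intro.2D} requires the \emph{fiber} ideals $\mathcal I_k$ to be $F_\sigma$, and already $\fin\otimes\fin$ is properly $\boldsymbol\Sigma^0_3$; so $\fin^3$ (and all higher levels) cannot be written as a Fubini sum of $F_\sigma$ ideals over an $F_\sigma$ ideal, and the claimed reduction fails at $\alpha=3$. More fundamentally, ``there is no $\fin(\vec\phi)$-MAD family'' does not reduce inductively to the corresponding statements for initial segments $\vec\phi\res\beta$: a $\mathcal J$-AD family on $M^\alpha$ is not assembled from lower-level AD families, so knowing the theorem for all $\beta<\alpha$ does not by itself give the conclusion at $\alpha$. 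The paper never runs such an induction; Section~\ref{s.alphaD} defines $M^\alpha$, the orderings $\sqsubseteq_\alpha$, $\sqsubset_\alpha$, $<_\alpha$, and the forcing $\M_\alpha^\I$ once and for all (fixing cofinal sequences at limits), and then proves all the key lemmas \emph{uniformly} in $\alpha$, by arguments that parallel the two-dimensional case rather than invoke it. The product decomposition of $\M_\alpha^\I$ (Lemmas~\ref{l.alphaD.properties}(\ref{decomposition.alphaD}) and~\ref{l.decomposition'.alphaD}) is used as a tool \emph{inside} the Branch Lemma argument, not as the basis for a two-step iteration.

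Second, and more importantly, your sketch omits the central technical engine. You say the ``crucial property'' is a diagonalization lemma giving, for $Z\in\fin(\vec\phi)^+$ coded in the inner model, a condition forcing $\dot Y\cap Z\in\fin(\vec\phi)$. But the sets one must avoid are the elements of $\pi[T]^{V[G]}$, which includes branches of the Suslin tree $T$ that do \emph{not} exist in the ground model and so are not ``coded in the inner model''. The whole difficulty, and the reason the paper introduces the invariant tree $T^{x}$ (Definition~\ref{d.alphaD.tree}), the poset $\Gamma$, and the well-foundedness argument culminating in the Branch Lemma~\ref{l.alphaD.branch} ($\forces|\pi[T^{x_{\dot G}}]|\leq 1$), is to force any branch of $T$ that has large intersection with $x_{\dot G}$ to be unique, hence definable from $[x_{\dot G}]$, hence back in the ground model via the homogeneity/decomposition lemmas. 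Without this step the absoluteness argument does not close the loop, regardless of how well-behaved the diagonalization is. Your ``fusion-style argument at limits'' is also not what is needed: the paper's diagonalization lemma (Lemma~\ref{diagonalizing.A.alpha}) goes through at limits essentially verbatim from the two-dimensional case; the genuine work is in the Branch Lemma, which your outline never touches.
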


By this theorem, the ideals $\mathcal J$ for which we have the familiar pattern of non-definability of $\mathcal J$-MAD families lie cofinally in the Borel hierarchy with respect to the complexity of their definition.

\medskip

\textbf{(C)} The paper is structured as follows: 
Section~\ref{s.notation} collects definitions and facts that will be used throughout---most importantly, some facts from inner model theory which will allow us to assume that our $\mathcal J$-MAD families are $\kappa$-Suslin witnessed by a tree from  a model with small $\powerset(\powerset(\omega))$.

In Section~\ref{s.1D} we prove Theorem~\ref{t.intro.1D} for any $F_\sigma$ ideal $\mathcal J$.
We describe Mathias forcing $\M^\I$ relative to an ideal $\I$ and state a crucial fact, Main Proposition~\ref{mprop.1D}, regarding $\M^\I$ when $\I$ is generated by a $\kappa$-Suslin $\mathcal J$-MAD family.
Theorem~\ref{t.intro.1D} follows quickly from Main Proposition~\ref{mprop.1D} together with the inner model theory lemmas from Section~\ref{s.notation}.

Section~\ref{s.1D.properties} collects facts about $\M^\I$, and Section~\ref{s.1D.branch} proves Main Proposition~\ref{mprop.1D}.
This proof is based on the definition of a $\mathcal J$-invariant tree together with the purely combinatorial Branch Lemma~\ref{l.1D.branch} (stating that the projection of this tree is a singleton).

In Section~\ref{s.2D} we introduce the simple Fubini product and a 2-dimensional version $\M^{\I}_2$ of $\M^{\I}$.
Theorem~\ref{t.intro.2D} follows from Main Proposition~\ref{mprop.2D} (the analogue of \ref{mprop.1D} for $\M^{\I}_2$) by the same proof as before.
Section~\ref{s.2D.properties} collects facts about $\M^\I$ and Section~\ref{s.2D.branch} proves Main Proposition~\ref{mprop.2D};
the proof of the corresponding Branch Lemma~\ref{l.2D.branch} is much more involved.

Section~\ref{s.alphaD} is structured in the same way: We introduce Fubini products $\fin(\vec \phi)$ coming from a sequence $\vec \phi$ of lsc submeasures, an $\alpha$-dimensional version $\M^{\I}_\alpha$ of $\M^{\I}$, and state Main Proposition~\ref{mprop.alphaD} for $\M^{\I}_\alpha$, from which Theorem~\ref{t.intro.alphaD} follows immediately.
Section~\ref{s.alphaD.properties} collects facts about $\M^\I_\alpha$. 
Section~\ref{s.alphaD.branch} proves Main Proposition~\ref{mprop.alphaD} via the Branch Lemma~\ref{l.alphaD.branch}, generalizing Section~\ref{s.2D.branch}.

Finally, in section \ref{s.postscript}, we briefly discuss the general (and open) problem of characterizing precisely for which Borel ideals on $\omega$ one may hope to obtain an analogue of Theorem \ref{t.intro.alphaD}.

\medskip

\textit{Acknowledgments: We owe a great debt to Sandra M\"uller for helping us find proofs of Lemmas~\ref{l.innnermodeltheory.PD} and \ref{l.innnermodeltheory.AD}. The first author thanks for support through Asger T\"ornquist's Sapere Aude level 2 grant from the Danish Council for Independent Research. The second and third authors thank the DNRF Niels Bohr Professorship of Lars Hesselholt for support. The second author also gratefully acknowledges the generous support from the Austrian Science Fund (FWF) grant number F29999. The third author thanks the Danish Council for Independent Research for support through grant DFF-7014-00145B.}


\medskip

\section{Notation and Preliminaries}\label{s.notation}

We sometimes decorate names in the forcing language with checks and dots with the aim of helping the reader.
For notation not defined here we refer to \cite{kechris,jech,kanamori,moschovakis}

\subsection{Ideals}
Fix a countable set $S$. An \emph{ideal}  on $S$ is a family $\mathcal S \subseteq\powerset(S)$ 
satisfying
\begin{enumerate}
\item $\emptyset \in \mathcal J$;
\item If $A \in \mathcal J$, then for any subset $B \subseteq A$ we have $B \in \mathcal J$;
\item If $A \in \mathcal J$ and $B \in \mathcal J$, then $A \cup B \in \mathcal J$.
\end{enumerate}
We denote by $\fin$ the ideal of finite sets. 

Given an ideal $\mathcal J$, we write $\mathcal J^+$ to denote the co-ideal, i.e., 
\[
\mathcal J^+ = \{A \subseteq S \mid A \notin \mathcal J\}.
\]
For $A$, $B \in \powerset(S)$, we write
$$
A\subseteq^*_{\mathcal J} B \Leftrightarrow (\exists I \in \mathcal J)\; A \subseteq B\cup
I.
$$
We write $A \subseteq^* B$ for $A \subseteq^*_{\fin} B$.

\medskip

We say that a family $\A \subseteq \powerset(S)$ is $\mathcal J$-\emph{almost disjoint} (short:
\emph{$\mathcal J$-AD}) if $\A \subseteq \mathcal J^+$ and for any $A$, $B \in \A$ we have $A \cap B \in \mathcal J$.
A set
$\A\subseteq \powerset(S)$ is said to be a \emph{$\mathcal J$-MAD family} if $\A$ is a $\mathcal J$-AD family which is maximal with respect to inclusion among $\mathcal J$-AD families.

\begin{definition}
Let $\A \subseteq \powerset(S)$. 
By the \emph{ideal generated by $\A$} we mean the ideal
$\I$ on $S$ defined as follows:
$$
\I = \{I \subseteq S \mid
(\exists n \in\omega)(\exists A_0, \dots, A_{n} \in \A) \;  I\subseteq\bigcup_{i\leq n} A_i\},
$$
i.e., the smallest (under $\subseteq$) ideal on $S$ containing each set from
$\A$.
\end{definition}
Suppose $\A \subseteq \powerset(S)$ and $\mathcal J$ is an ideal on $S$. Then note the ideal generated by $\A \cup\mathcal J$ is
\begin{align*}
\{I\subseteq S \mid
     I\in \mathcal J \lor (\exists n \in\omega)(\exists A_0, \dots, A_{n} \in \A) \; I \subseteq^*_{\mathcal{J}}
       \bigcup_{i\leq n}A_i\}.
\end{align*}


We point out that if $\A$ is an infinite $\mathcal J$-AD family then $[S]^{<\omega}\subseteq \mathcal J$ and  $\mathcal J$ is proper (i.e., $S \notin \mathcal J$; otherwise there are no non-empty, let alone infinite, $\mathcal J$-AD families).
Moreover we could assume $\bigcup \mathcal A = S$ (although we shall never need this).

We point out that enlarging an ideal $\mathcal J$ by an infinite $\mathcal J$-AD family yields a proper ideal.
\begin{lemma} 
Let $S$ be arbitrary, $\mathcal J$ an ideal on $S$ and $\A \subseteq \powerset(S)$ a $\mathcal{J}$-AD
family.
If 
$\A$ is infinite, the ideal $\I$ generated by $\A \cup\mathcal J$ is proper. (The other implication
holds if $\bigcup\A = S$.)
\end{lemma}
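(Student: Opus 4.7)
The plan is to prove the main implication by contradiction, and dispatch the converse by an immediate observation. First, assume $\mathcal A$ is infinite and suppose toward contradiction that $S\in\I$. Unfolding the characterization of $\I$ given just before the lemma, there exist $n\in\omega$, sets $A_0,\dots,A_n\in\mathcal A$, and $I\in\mathcal J$ such that
\[
S\subseteq I\cup\bigcup_{i\leq n}A_i.
\]
Here the key idea is to exploit the infinitude of $\mathcal A$: choose some $B\in\mathcal A$ with $B\neq A_i$ for all $i\leq n$, which is possible because $\mathcal A$ has infinitely many elements while only finitely many $A_i$ have been used.

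Intersecting the displayed inclusion with $B$ yields
\[
B = (B\cap I)\cup\bigcup_{i\leq n}(B\cap A_i).
\]
Now $B\cap I\in\mathcal J$ since $I\in\mathcal J$ and $\mathcal J$ is downward closed, and $B\cap A_i\in\mathcal J$ for each $i\leq n$ because $\mathcal A$ is a $\mathcal J$-AD family and $B\neq A_i$. Since $\mathcal J$ is closed under finite unions, this forces $B\in\mathcal J$, contradicting the fact that $B\in\mathcal A\subseteq\mathcal J^+$. Hence no such expression for $S$ exists, so $S\notin\I$ and $\I$ is proper.

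For the parenthetical converse, assume $\bigcup\mathcal A=S$ and $\mathcal A$ is finite, say $\mathcal A=\{A_0,\dots,A_n\}$. Then $S=\bigcup_{i\leq n}A_i\in\I$ directly from the definition of $\I$, so $\I$ is not proper. I do not anticipate a real obstacle here; the only subtle point is to recognize that the argument for the main direction uses \emph{only} that $\mathcal A$ is infinite (not that it covers $S$), whereas the converse requires $\bigcup\mathcal A=S$ so that improperness is witnessed without any further appeal to $\mathcal J$.
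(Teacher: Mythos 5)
Your proof is correct and takes essentially the same approach as the paper's: assume $S\in\I$, unfold the cover $S\subseteq I\cup\bigcup_{i\leq n}A_i$, pick $B\in\mathcal A\setminus\{A_0,\dots,A_n\}$ (possible by infinitude), and derive $B\in\mathcal J$ via downward closure and finite unions, contradicting $B\in\mathcal J^+$; the converse is the same contrapositive the paper gives. (Minor nit: the displayed equality $B=(B\cap I)\cup\bigcup_{i\leq n}(B\cap A_i)$ should be an inclusion $\subseteq$, but this does not affect the argument since $\mathcal J$ is downward closed.)
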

\begin{proof}
Suppose towards a contradiction that $S \in \I$. Then there exist $A_0, \dots, A_{n} \in \A$, $J \in
\mathcal{J}$ such that $S \subseteq
\bigcup_{i\leq n}A_i \cup J$.
Since $\A$ is
$\mathcal{J}$-almost disjoint $\A=\{A_0, \hdots, A_n\}$ is finite.

For the last claim, suppose $\bigcup\A = S$; show the contrapositive. If $\A$ is finite, then $S \subseteq
\bigcup_{A\in\A}A$, and thus $S \in \I$.
\end{proof}

\medskip

A \emph{submeasure} on $\omega$ is a function $\phi\colon \powerset(\omega) \to [0,\infty]$ which
satisfies
\begin{itemize}
\item $\phi(\emptyset) = 0$;
\item $\phi(X) \leq \phi(Y)$ for $X \subseteq Y$;
\item $\phi(X \cup Y) \leq \phi(X) + \phi(Y)$ for $X,Y \in \powerset(\omega)$;
\item $\phi(\{n\}) < \infty$ for every $n\in\omega$.
\end{itemize}

We say that $\phi$ is \textit{lower
semi-continuous} (lsc) if identifying $\powerset(\omega)$ with $2^{\omega}$ carrying product topology, it is lower semi-continuous as a function $\phi \colon 2^{\omega} \to [0,\infty]$,
i.e., if $X_n \to X$ implies $\lim \inf_{n\to\infty} \phi(X_n) \geq \phi(X)$. For
submeasures, this is equivalent to saying that
$\phi(X)=\lim_{n\to\infty}\phi(X \cap n)$.

As stated already in the introduction, given a submeasure $\phi$ on $\omega$
\[
\fin(\phi)=\{X\in\powerset(\omega)\mid\phi(X)<\infty\}
\]
is an $F_\sigma$ ideal on $\omega$ and every $F_\sigma$ ideal $\mathcal J \supseteq \fin$
arises in this way \cite[1.2]{mazur}.

\subsection{Trees and Suslin sets of reals}\label{s.trees}

Let $X_0,X_1$ be a sets. 
We follow established descriptive set theoretic conventions and call a \emph{tree $T$ on $X_0\times X_1$} a subset of 
$X_0^{<\omega} \times X_1^{<\omega}$ which is closed under initial segments and such that $(t_0,t_1) \in T \Rightarrow \lh(t_0) = \lh(t_1)$ (compare \cite[2.C]{kechris}). 
Given $t=(t_0,t_1) \in T$, $\pi(t)=t_0$.
For any $s\in T$,  $T_{[s]} = \{t\in T \mid t \text{ is compatible with } s\}$. 
Of course 
\[
[T]=\{(x_0,x_1)\in
X_0^{\omega}\times X_1^{\omega}\mid (\forall n\in\omega)\; (x_0 \restriction n, x_1\res n) \in T\},
\]  
and for $w=(x_0,x_1) \in [T]$, $\pi(w)=x_0$.
Finally we write
\begin{align*}
\pi[T] = \{x_0 \in X_0^{\omega} \mid (\exists x_1 \in X_1^{\omega}) \; (x_0,x_1) \in [T]\}.
\end{align*}

Recall that $\A \subseteq 2^{\omega}$ is \emph{$\kappa$-Suslin} if and only if there exists an ordinal $\kappa$ and a tree $T$ on $2 \times \kappa$ such that
\begin{align*}
\A = \pi[T] = \{x \in 2^{\omega} \mid (\exists f \in \kappa^{\omega}) \; (x,f) \in [T]\}.
\end{align*}
The analytic subsets of $2^\omega$ are precisely the $\omega$-Suslin sets.

\medskip

If $S$ is a countable set we will also talk of $\kappa$-Suslin subsets of $\powerset(S)$.
For this purpose we shall identify $S$ with $\omega$ via some fixed bijection $h \colon \omega \to S$ as well as identify each $x \subseteq S$ with its characteristic function $\chi_x$. 

Thus, 
$\A \subseteq \powerset(S)$ is $\kappa$-Suslin if and only if there is a tree $T$ on $2\times\kappa$ such that
$\A=\{x \in \powerset(S) \mid \chi_x \circ h \in \pi[T]\}$.
We shall (sloppily and through the identifications of $S$ with $\omega$ and $\chi_x$ with $x$) also write
$\A=\pi[T]$ in such a case.

\medskip

We use both $\subseteq$ and $\sqsubseteq$ for the initial segment relation for sequences.

\begin{lemma}\label{l.absoluteness}
Let $T$ be a tree on $2\times\kappa$ and let $\mathcal J$ be a Borel ideal on a countable set $S$. Then the following properties are absolute between a ground model and its forcing extension:
\begin{enumerate}
\item \label{inBaire}``$\pi[T] \subseteq \mathcal J^+$''.
\item \label{branches_are_ad}``$\pi[T]$ is an $\mathcal J$-almost disjoint family''.
\item \label{catches} ``$y$ is $\mathcal J$-almost disjoint from every set in $\pi[T]$''.
\end{enumerate}
In the above, we mean by $\mathcal J$ the ideal obtained by interpreting the Borel \emph{definition} in the current model.
\end{lemma}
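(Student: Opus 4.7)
The plan is to rephrase each of the three properties as the assertion that a certain tree, built absolutely from $T$, a Borel code for $\mathcal J$, and (in (3)) the parameter $y$, has no infinite branch, and then to invoke the absoluteness of well-foundedness between $V$ and $V[G]$. Since $\mathcal J$ is Borel, I fix in the ground model a Borel code which, interpreted in either $V$ or $V[G]$, yields trees $U^{\mathcal J}$ and $U^{\mathcal J^+}$ on $2\times\omega$ such that $\pi[U^{\mathcal J}]=\mathcal J$ and $\pi[U^{\mathcal J^+}]=\mathcal J^+$ as computed in the ambient model---this is the standard absolute $\omega$-Suslin representation of a Borel set from its code. Since $(x,y)\mapsto x\cap y$ is continuous, I obtain likewise, uniformly from these, trees $V^{\cap\mathcal J}$ and $V^{\cap\mathcal J^+}$ on $(2\times 2)\times\omega$ projecting to $\{(x,y):x\cap y\in\mathcal J\}$ and $\{(x,y):x\cap y\in\mathcal J^+\}$.

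Now I handle each clause by amalgamating trees. For (1), set $T_1$ on $2\times(\kappa\times\omega)$ by declaring $(s,(f,g))\in T_1$ iff $(s,f)\in T$ and $(s,g)\in U^{\mathcal J}$; then $\pi[T_1]=\pi[T]\cap\mathcal J$, and so (1) is precisely the statement that $T_1$ has no infinite branch. For (3), amalgamate one copy of $T$ in the variable $x$ with $V^{\cap\mathcal J^+}$ applied to $(x,y)$, treating $y$ as a fixed ground-model parameter; this produces a tree $T_3$ with $\pi[T_3]=\{x\in\pi[T]:x\cap y\notin\mathcal J\}$, and (3) becomes the assertion that $T_3$ has no infinite branch. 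For (2), combine two copies of $T$ in variables $x$ and $y$, the tree $V^{\cap\mathcal J^+}$ applied to $(x,y)$, and a trivial tree encoding the open condition $x\neq y$ (witnessed by a single coordinate where $x$ and $y$ differ), to form a tree $T_2$ whose projection is $\{(x,y):x,y\in\pi[T],\,x\neq y,\,x\cap y\notin\mathcal J\}$; the pairwise-AD half of (2) is then the statement that $T_2$ has no infinite branch, and together with (1), which supplies the $\pi[T]\subseteq\mathcal J^+$ half of (2), and the trivial fact that a conjunction of absolute statements is absolute, this handles (2).

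The argument concludes by invoking the standard fact that well-foundedness of a tree on any set is absolute between transitive models of ZF: downwards because any infinite branch living in the smaller model is still a branch in the larger, and upwards because a ranking function into the ordinals witnessing well-foundedness in $V$ remains a ranking in $V[G]$. Hence each of $T_1$, $T_2$, $T_3$ is well-founded in $V$ iff well-founded in $V[G]$, giving the three desired absoluteness statements. There is no real obstacle; the only care needed is in the mechanics of amalgamating the trees so that each auxiliary condition (the open $x\neq y$, the continuous map $x\cap y$, and Borel membership in $\mathcal J$ or $\mathcal J^+$) is encoded inside a single tree on a product of $\kappa$ with $\omega$, which is entirely routine.
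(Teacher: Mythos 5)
Your proposal is correct and takes essentially the same approach as the paper: both reduce each of the three statements to the assertion that a suitable auxiliary tree (built from $T$, a Borel/Suslin representation of $\mathcal J$ or $\mathcal J^+$, and, in (3), the parameter $y$) has no infinite branch, and then appeal to the absoluteness of well-foundedness. The only cosmetic difference is that you form the auxiliary trees by direct level-by-level amalgamation of trees on products, whereas the paper tracks an increasing sequence of nodes from the witnessing tree $U$; the two bookkeeping schemes are interchangeable.
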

\begin{proof}
(\ref{inBaire})
Let $U$ be a tree on $2\times\omega$ such that $\mathcal J = \pi[U]$.
Consider the tree $T_{+}$ on  $2\times \kappa \times U$ defined by
\begin{align*}
T_{+} = \{ (a, s,\bar u) \in 2^{<\omega}\times\omega^{\omega}  \times U^{<\omega} \mid  &\lh(a) =\lh(s)= \lh(\bar u),\\
      &(a,s) \in T \text{ and for all $k<\lh(t)$}\\
      &\text{for all $k'<k$ }\bar u(k')\subsetneq  \bar u(k)\text{ and}\\
      &a\res k+1 \subseteq \pi(\bar u(k)) \}
\end{align*}
where we identify $a \in 2^{<\omega}$ with $\{n\mid a(n)=1\}$.
Then $\pi[T]\subseteq\mathcal J^+$ if and only if $[T_{+}] = \emptyset$, which is
absolute.

\medskip
(\ref{branches_are_ad})
By the previous item is suffices to show that ``$\forall x,y\in \pi[T]$ $x\neq y \Rightarrow x\cap y \in \mathcal J$'' is absolute.
Let $U$ be a tree on $2\times\omega$ such that $\mathcal J^+ = \pi[U]$.
Consider the tree $T_{\cap}$ on  $T\times T \times 2\times\omega$ defined by
\begin{align*}
T_{\cap} = \{ (\bar t_0,\bar t_1, a, s) \in T \times T \times 2^{<\omega}\times\omega^{<\omega} \mid &\lh(\bar t_0) = \lh(\bar t_1) = \lh(a)=\lh(s),\\
&(a,s)\in U, \pi(\bar t_0)\neq \pi(\bar t_1), \\
      &\text{and for all $k< \lh(\bar t_0)$,}\\
      &\text{for all $k'<k$, }\bar t_0(k')\subsetneq  \bar t_0(k) \in T,\\
      &\bar t_1(k')\subsetneq  t_1(k) \in T\text{, and}\\
      &a \res k+1 \subseteq \pi(\bar t_0(k))\cap\pi(\bar t_1(k))\}
\end{align*}
where we identify $s \in 2^{<\omega}$ with $\{n\mid s(n)=1\}$.
Then the statement in question holds if and only if $[T_{\cap}] = \emptyset$, which is
absolute.

\medskip
(\ref{catches}) Similarly as the previous item (left to the reader).
\end{proof}

\medskip

\subsection{Inner model theory}\label{ss.inner.model.theory}

It is well known that under $\PD$ the pointclasses  $\mathbf{\Pi}^1_{2n+1}$ and $\mathbf{\Sigma}^1_{2n+2}$ are \emph{scaled} (i.e., they have the prewellordering property---see \cite{moschovakis}, \cite[Chapter~36]{kechris}, or \cite[\S30]{kanamori} for an introduction to the theory of scales).
These scales provide us with tree representations for projective sets while at the same time, each scale can be captured by a `small' model.
For the proof of the next lemma, also recall that ${\bfdelta}^1_n$ is the supremum of the lengths of $\mathbf{\Delta}^1_n$ prewellorderings of $\omega^{\omega}$ (see, e.g., \cite[p.~423]{kanamori}).

\begin{lemma}\label{l.innnermodeltheory.PD}
Assume $\PD$. Suppose $A$ is projective. 
There exists a model $M$ of $\ZFC$ and a tree $T \in M$ on $\omega \times \kappa$ (for some ordinal $\kappa$) such that
$\pi[T]=A$ and $\powerset(\powerset(\omega))^M$ is countable in $V$.
\end{lemma}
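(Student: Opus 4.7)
The plan is to combine Moschovakis' scale property, which yields tree representations for projective sets under $\PD$, with Steel's theory of mice, which provides small models in which those trees can be located. As a preliminary, under $\PD$ the pointclasses $\mathbf{\Pi}^1_{2n+1}$ and $\mathbf{\Sigma}^1_{2n+2}$ are scaled, so every projective set $A$ carries a projective scale and is $\kappa$-Suslin for some $\kappa\le\bfdelta^1_m$, with $m$ depending on the complexity of $A$; the tree $T$ of such a scale satisfies $\pi[T]=A$ in $V$. This alone does not produce a small $M$, since $\kappa$ is typically uncountable and $T$ sits in a very large ambient structure.

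To localize the tree, I would invoke two facts, both valid under $\PD$: (a) for each $n\in\omega$ the minimal $\omega_1$-iterable premouse $M_n^{\#}$ with $n$ Woodin cardinals exists and is a countable structure; and (b) by the Steel--Woodin analysis of scales inside Woodin mice, for $n$ sufficiently large (depending on the complexity of $A$) the tree $T$ can be chosen to lie in $M_n^{\#}$, on $\omega\times\kappa$ with $\kappa$ below some Woodin cardinal of $M_n^{\#}$, and in such a way that $\pi[T]=A$ \emph{computed in $V$}. The last assertion---that the tree extracted from the mouse projects in $V$ to the full $V$-set $A$, not merely to the reals of $A$ lying inside $M_n^{\#}$---is the essential inner-model-theoretic input. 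It comes from the universal Baireness (equivalently, homogeneous Suslin) properties of scale representations inside Woodin mice, together with the theory of generic iterations of $M_n^{\#}$. This is the step which the authors attribute in the acknowledgments to Sandra M\"uller.

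Finally, since the premouse $M_n^{\#}$ itself satisfies only a fragment of $\ZFC$, I would pass to a suitable initial segment: pick $\lambda$ inaccessible in $M_n^{\#}$ with $\kappa<\lambda$ (possible because $M_n^{\#}$ has Woodin cardinals, hence a proper class of inaccessibles below them), and set $M=V_\lambda^{M_n^{\#}}$. Then $M\models\ZFC$, $T\in M$, and because $M\subseteq M_n^{\#}$ is countable in $V$, the set $\powerset(\powerset(\omega))^M\subseteq M$ is countable in $V$ as required. The main obstacle is precisely the absoluteness claim of the previous paragraph; once it is granted, the remaining steps are routine.
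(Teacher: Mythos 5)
There is a genuine flaw in the localization step, and it is not a detail that can be patched: if $T\in M_n^{\#}$ is a tree on $\omega\times\kappa$ and $M_n^{\#}$ is countable in $V$ (which you explicitly use at the end, writing ``because $M\subseteq M_n^{\#}$ is countable in $V$''), then $\kappa<\omega_1^V$ and $T$ itself is a countable object in $V$. A countable tree on $\omega\times\kappa$ can be recoded as a tree on $\omega\times\omega$, so $\pi[T]$ as computed in $V$ is always $\mathbf{\Sigma}^1_1$. Thus the claim that one can arrange $\pi[T]=A$ in $V$ with $T$ living inside a countable mouse cannot hold for any $A$ that is properly projective; no appeal to universal Baireness or generic iterations of $M_n^{\#}$ can save this, because the obstruction is purely one of cardinality. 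Relatedly, your $M=V_\lambda^{M_n^{\#}}$ is countable, which is a much stronger property than what the lemma asks for (and what the subsequent forcing arguments in the paper actually use), but it also makes the conclusion $\pi[T]=A$ impossible for non-analytic $A$.

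The paper's proof sidesteps exactly this by working with an uncountable (indeed proper-class) inner model rather than a countable one. It sets $M=\eL[T_n,a]$, where $T_n$ is the (uncountable) tree of a $\Pi^1_n$-scale on a complete $\Pi^1_n$ set, and then shows $\powerset(\powerset(\omega))^M$ is countable by identifying $M$ with $\eL[Q\vrestrict\bfdelta^1_n]$, where $Q$ is an iterate of the \emph{class-sized} model $M_{n-1}(a)$ obtained by iterating out the top extender of $M^{\#}_{n-1}(a)$; the countability of $\powerset(\powerset(\omega))^{M_{n-1}(a)}$ (hence of $\powerset(\powerset(\omega))^Q$ and $\powerset(\powerset(\omega))^M$) is cited from \cite{mueller-ea}. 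The model $M$ is uncountable and contains all ordinals, so it has room for the uncountable tree $T_n$ and for a tree $T$ with $\pi[T]=A$, while still having only countably many sets of reals. The step you were reaching for---that the mouse witnesses a small $\powerset(\powerset(\omega))$---is correct in spirit, but it must be applied to the class-sized iterate, not to the countable sharp itself. If you replace $V_\lambda^{M_n^{\#}}$ by a model of the form $\eL[T_n,a]$ (or $\eL[Q\vrestrict\bfdelta^1_n]$) and argue its $\powerset(\powerset(\omega))$ is countable via the mouse, you arrive at the paper's argument.
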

\begin{proof}
Given a projective set $A$, suppose without loss of generality that $A$ is $\mathbf{\Pi}^1_{n}$, $n \geq 3$, and $n$ is odd.
By the scale property let $T_n$ be the tree given by a $\Pi^1_{n}$ scale on a complete $\Pi^1_{n}$ set.
Fix $a \in \omega^{\omega}$ such that $A$ is $\Sigma^1_n(a)$ and let 
$M = \eL[T_n,a]$. 

\medskip

To see that $M$ satisfies what is claimed in the lemma, let
$M_{n-1}(a)$ be the class-sized iterable model with $n-1$ Woodin cardinals obtained by iterating out the top extender of $M^\#_{n-1}(a)$ (see Definitions~1.5 and 1.6 in \cite{mueller-ea}).
By $\PD$ this model exists   
and $\powerset(\powerset(\omega))^{M_{n-1}(a)}$ is countable in $V$ (by \cite[Theorem~2.1]{mueller-ea}).
As is pointed out in \cite[p.~12--13]{steel-games-scales} (the proof is said to be implicit in \cite{steel-HOD-L-of-R-core-model}) there is an iterate $Q$ of $M_{n-1}(a)$ such that
$M = \eL[Q|{\bfdelta}^1_n]$ (for this we need that $n$ is odd and $n \geq 3$). 
Moreover $\powerset(\powerset(\omega))^M$ is the same as $\powerset(\powerset(\omega))^{Q}$, which is countable in $V$.
By the presence of $T_n$ and $a$ in $M$, it is easy to obtain $T$ such that $\pi[T]=A$.

\end{proof}
There is a version of this based on the full Axiom of Determinacy ($\AD$), which we shall also use:
\begin{lemma}\label{l.innnermodeltheory.AD}
Assume $\AD$ holds and $V=\eL(\R)$. Suppose $A$ is $\mathbf{\Sigma}^2_1$. 
There exists a model $M$ of $\ZFC$ and a tree $T \in M$ on $\omega \times \kappa$ (for some ordinal $\kappa$) such that
$\pi[T]=A$ and $\powerset(\powerset(\omega))^M$ is countable in $V$.
\end{lemma}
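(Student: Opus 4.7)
The plan is to mimic the proof of Lemma~\ref{l.innnermodeltheory.PD} almost verbatim, replacing the mouse $M^{\#}_{n-1}(a)$ for a fixed finite level of the projective hierarchy with a mouse that captures the $\mathbf{\Sigma}^2_1$ scale under $\AD^{\eL(\R)}$, and replacing the $\PD$-input (scale property for odd projective levels plus the $M_n^\#$-theory of \cite{mueller-ea}) with its $\AD$-counterpart.

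First I would use the scale property. Under $\AD + V = \eL(\R)$ the pointclass $\mathbf{\Sigma}^2_1$ has the scale property (Moschovakis; see \cite{moschovakis}), so fix a $\Sigma^2_1$ scale on a universal $\mathbf{\Sigma}^2_1$ set and let $T_\ast$ be its scale tree, a tree on $\omega \times \kappa$ for some $\kappa < \Theta$. Pick $a \in \omega^\omega$ such that $A$ is $\Sigma^2_1(a)$ and obtain, by taking the appropriate $a$-section of $T_\ast$, a tree $T$ with $\pi[T] = A$ and $T \in \eL[T_\ast, a]$. Set $M = \eL[T_\ast, a]$; then $T \in M$, $\pi[T] = A$, and $M \models \ZFC$.

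Next I would verify that $\powerset(\powerset(\omega))^M$ is countable in $V$. Under $\AD^{\eL(\R)}$ the relevant analogue of $M_{n-1}^\#(a)$ exists; concretely, one takes the class-sized iterable inner model $N(a)$ obtained by iterating out the top extender of the ``$\mathbf{\Sigma}^2_1$-mouse'' above $a$ (the analogue in the $\AD$ setting of $M_{n-1}(a)$, falling within the scope of the theory developed in \cite{steel-games-scales} and \cite{steel-HOD-L-of-R-core-model}). In $V$, $\powerset(\powerset(\omega))^{N(a)}$ is countable (the $\AD^{\eL(\R)}$-analogue of \cite[Theorem~2.1]{mueller-ea}). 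As in Lemma~\ref{l.innnermodeltheory.PD}, one then finds an iterate $Q$ of $N(a)$ such that $M = \eL[Q|\delta]$ for an appropriate cutpoint $\delta$ (here playing the role that $\bfdelta^1_n$ plays in the projective case). Since $\powerset(\powerset(\omega))^{M} = \powerset(\powerset(\omega))^{Q}$ and the latter is countable in $V$, the conclusion follows.

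The main obstacle, and essentially the only non-routine step, is the inner-model-theoretic input in the previous paragraph: identifying the correct mouse above $a$ whose iterates absorb the $\mathbf{\Sigma}^2_1$ scale tree $T_\ast$, and citing (or establishing) the $\AD^{\eL(\R)}$-version of the fact that such a mouse has $\powerset(\powerset(\omega))$ countable in $V$. Once the appropriate reference (presumably due to Steel and Woodin, or a direct adaptation of \cite{mueller-ea}) is in hand, the construction of $T$ from the universal $\mathbf{\Sigma}^2_1$-tree by fixing the parameter $a$, and the transfer of countability from $Q$ to $\eL[Q|\delta]$, are entirely parallel to the $\PD$ argument.
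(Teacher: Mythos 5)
Your proposal is correct and follows essentially the same route as the paper: use the scale property for $\mathbf{\Sigma}^2_1$ under $\AD + V=\eL(\R)$ to get a scale tree $T^*$, then invoke Steel's results that $\eL[T^*]$ (or a relativization $\eL[T^*,a]$) can be written as $\eL[Q]$ for $Q$ an iterate of a mouse with countable $\powerset(\powerset(\omega))$ in $V$. One small point: the paper's version goes directly through an iterate of an initial segment of $M_\omega$ (unrelativized) and somewhat elides the real parameter $a$, whereas you mimic Lemma~\ref{l.innnermodeltheory.PD} more closely by relativizing the mouse to $a$ and setting $M=\eL[T^*,a]$ --- which is arguably the more careful formulation for boldface $\mathbf{\Sigma}^2_1$ sets, and harmless since the same inner-model-theoretic references cover both.
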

\begin{proof}
As we are working in $\eL(\R)$, $\mathbf{\Sigma}^2_1$ and $\Sigma_1(\R\cup\{\R\})$ are the same pointclass (see, e.g., \cite[p.~13]{steel-games-scales}).
Under the hypothesis of the lemma, by \cite{martin-steel} this pointclass is scaled; let $T^*$ be the tree coming from this scale.
According to \cite[p.~13]{steel-games-scales}, \cite{steel-HOD-L-of-R-core-model} shows that $\eL[T^*]= \eL[Q]$ where $Q$ is an iterate of an initial segment of $M_\omega$ and again it holds that
$\powerset(\powerset(\omega))^{\eL[T^*]}$ is countable in $V$.
Moreover $\mathcal A = \pi[T]$ for some tree  $T$ such that $T \in \eL[T^*]$.
\end{proof}
Finally we shall need the following result (due to Woodin) known as Solovay's Basis Theorem (see \cite[Remark~2.29(3)]{koellner-woodin}).
\begin{fact}\label{f.innnermodeltheory.reflection}
Assume $\AD$ holds and $V=\eL(\R)$. Then every $\mathbf{\Sigma}^2_1$ statement is witnessed by a set $A\subseteq \R$ which is itself $\mathbf{\Delta}^2_1$. 
\end{fact}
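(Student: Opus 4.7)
The plan is to invoke $\Sigma_1$-reflection inside $\eL(\R)$, leveraging the equivalence between the pointclasses $\mathbf{\Sigma}^2_1$ and $\Sigma_1(\R \cup \{\R\})$ already used in the proof of Lemma~\ref{l.innnermodeltheory.AD}. Given a true $\mathbf{\Sigma}^2_1$ statement, I would first rewrite it as $\exists x\, \chi(x, \R)$, where $\chi$ is a $\Sigma_0$ formula over $\eL(\R)$ (absorbing the clause ``$x \subseteq \R$'' into $\chi$). Since $V = \eL(\R)$ satisfies this existential, $\Sigma_1$-reflection in $\eL(\R)$ (which is immediate from the upward absoluteness of $\Sigma_1$ formulas between the levels $\eL_\alpha(\R)$) yields a least ordinal $\alpha_0$ such that $\eL_{\alpha_0}(\R) \models \exists x\, \chi(x, \R)$.

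Next, I would take $A$ to be the $<_{\eL_{\alpha_0}(\R)}$-least witness inside $\eL_{\alpha_0}(\R)$, where $<_{\eL_\alpha(\R)}$ denotes the canonical wellorder of $\eL_\alpha(\R)$ definable uniformly from $\R$ and $\alpha$. Then $A \subseteq \R$ witnesses the original statement, and both ``$r \in A$'' and ``$r \notin A$'' admit $\Sigma_1$-definitions over $\eL(\R)$ with parameter $\R$. Explicitly, $r \in A$ iff there exist $\alpha$ and $x$ such that $\alpha$ is the least ordinal with $\eL_\alpha(\R) \models \exists y\, \chi(y, \R)$, $x$ is the $<_{\eL_\alpha(\R)}$-least such $y$, and $r \in x$; and the analogous formula with ``$r \in x$'' replaced by ``$r \notin x$'' captures ``$r \notin A$''. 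Under the equivalence recalled above, $A$ is both $\mathbf{\Sigma}^2_1$ and $\mathbf{\Pi}^2_1$, hence $\mathbf{\Delta}^2_1$.

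The step I expect to require the most care is the justification of the two standard ingredients invoked: the identification of $\mathbf{\Sigma}^2_1$ with $\Sigma_1(\R \cup \{\R\})$ over $\eL(\R)$, and the uniform definability of a canonical wellorder on each $\eL_\alpha(\R)$ from the parameters $\R$ and $\alpha$. Both are standard facts about the construction of $\eL(\R)$ (the first being the version of the Levy hierarchy appropriate when $V = \eL(\R)$, the second coming from the definability of the relative constructibility ordering), but they are the crucial inputs; once they are in hand, the rest of the argument is purely bookkeeping around the minimality of $\alpha_0$ and of the least witness.
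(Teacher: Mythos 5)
The paper does not prove this statement itself; it cites it as Woodin's form of Solovay's Basis Theorem, pointing to the Koellner--Woodin reference. So there is no internal proof against which to compare. Your proposed argument does capture the right skeleton: rewrite a true $\mathbf{\Sigma}^2_1$ statement as $\Sigma_1$ over $\eL(\R)$ with parameters from $\R \cup\{\R\}$, $\Sigma_1$-reflect to a least $\alpha_0$, and then try to extract a single witness whose membership and non-membership are both $\Sigma_1$-expressible.

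However, there is a genuine gap at the extraction step. You appeal to a ``canonical wellorder of $\eL_\alpha(\R)$ definable uniformly from $\R$ and $\alpha$'' to select a least witness. No such wellorder exists under $\AD$: since $\R \subseteq \eL_\alpha(\R)$ for every $\alpha$ and $\AD$ rules out a wellordering of $\R$, the set $\eL_\alpha(\R)$ has no wellordering at all, definable or otherwise. What the relative constructibility ordering actually gives you is a prewellordering by stage of construction; within a stage, an element is pinned down by a formula, ordinal parameters, and finitely many \emph{real} parameters, and the real parameters cannot be chosen canonically. So ``the $<_{\eL_{\alpha_0}(\R)}$-least witness'' is undefined, and the two $\Sigma_1$-definitions you write down for ``$r\in A$'' and ``$r\notin A$'' are not definitions. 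The standard repair replaces the wellorder by a condensation argument: by minimality of $\alpha_0$, the $\Sigma_1$-Skolem hull of $\R\cup\{\R\}$ (together with the real parameter of the original statement) in $\eL_{\alpha_0}(\R)$ transitively collapses onto $\eL_{\alpha_0}(\R)$ itself, so \emph{some} witness $A$ is $\Sigma_1$-definable over $\eL_{\alpha_0}(\R)$ from $\R$, $\{\R\}$, and finitely many reals. For that particular $A$ the bookkeeping you outline --- encoding ``$\alpha_0$ is least'' in a $\Sigma_1$ way and then asserting $r\in A$ resp.\ $r\notin A$ inside $\eL_{\alpha_0}(\R)$ --- does succeed, yielding a $\mathbf{\Delta}^2_1$ witness with real parameters, which is exactly what the boldface class permits.
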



\medskip

\section{Classical MAD families (and a bit more)}\label{s.1D}

In this section we give proofs of the following:
\begin{theorem}\label{t.main.1D}
Let $\mathcal J =\fin$, or more generally $\mathcal J=\fin(\phi)$ where $\phi$ is an lsc submeasure on $\omega$.
\begin{enumerate}
\item There are no analytic infinite $\mathcal J$-MAD families.
\item Under ZF + Dependent Choice + Projective Determinacy, there are no projective infinite $\mathcal J$-MAD families.
\item Under ZF + Determinacy + $V=L(\R)$ there are no infinite $\mathcal J$-MAD families.
\end{enumerate} \end{theorem}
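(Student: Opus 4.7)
The plan is to deduce all three parts uniformly from a single Main Proposition~\ref{mprop.1D} about a Mathias-like forcing $\M^{\I}$. Assume toward contradiction that $\mathcal{A}$ is an infinite $\mathcal{J}$-MAD family of the complexity allowed by the hypothesis. The first step is to reduce to the situation where $\mathcal{A}=\pi[T]$ for a tree $T$ on $2\times\kappa$ lying inside a transitive model $M$ of (a sufficient fragment of) $\ZFC$ with $\powerset(\powerset(\omega))^M$ countable in $V$. For~(1), $\mathcal{A}$ is $\omega$-Suslin, so any countable transitive $M$ containing $T$ and a code for $\phi$ works. For~(2), Lemma~\ref{l.innnermodeltheory.PD} supplies such $M$ directly. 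For~(3), the statement ``there is an infinite $\mathcal{J}$-MAD family'' is $\mathbf{\Sigma}^2_1$, so by Solovay's Basis Theorem (Fact~\ref{f.innnermodeltheory.reflection}) the witnessing family can be chosen $\mathbf{\Delta}^2_1$, and Lemma~\ref{l.innnermodeltheory.AD} then provides $M$ and $T$.

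Working inside $M$ I form the ideal $\I$ generated by $\mathcal{A}\cup\mathcal{J}$, which is proper precisely because $\mathcal{A}$ is infinite (by the lemma in Section~\ref{s.notation}). According to the forthcoming Main Proposition~\ref{mprop.1D}, the forcing $\M^{\I}$ built in $M$ adds a real $\dot x_G$ forced both to lie in $\mathcal{J}^+$ and to be $\mathcal{J}$-almost disjoint from every set in $\pi[T]$. Since $\powerset(\powerset(\omega))^M$ is countable in $V$, I enumerate the dense subsets of $\M^{\I}$ belonging to $M$ and, using Dependent Choice in $V$ (available throughout: part of $\ZFC$ for~(1), explicit in~(2), and a theorem of $\ZF+\AD+V=\eL(\R)$ for~(3)), construct an $M$-generic filter $G$ meeting all of them. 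Let $x_G$ be the associated real. By the absoluteness Lemma~\ref{l.absoluteness} (applied to a trivial tree encoding $x_G$ for item~(1), and directly to item~(3)), the assertions ``$x_G\in\mathcal{J}^+$'' and ``$x_G$ is $\mathcal{J}$-almost disjoint from every element of $\pi[T]$'' persist from $M[G]$ to $V$, contradicting the $\mathcal{J}$-maximality of $\mathcal{A}$ in $V$.

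The real obstacle is therefore Main Proposition~\ref{mprop.1D} itself; the three steps above (reduce to the $\kappa$-Suslin case, run the Mathias-like forcing, transfer via absoluteness) form only a short outer shell. Its proof is where one genuinely exploits that $\mathcal{A}$ is $\kappa$-Suslin: I expect one to build a $\mathcal{J}$-invariant auxiliary tree that searches for names for sets violating the desired conclusion, and then to show that its projection is a singleton---the content of the forthcoming Branch Lemma~\ref{l.1D.branch}. This combinatorial core is the technically demanding step, and its analogue must be reproved in the more complex Fubini-product contexts of Sections~\ref{s.2D} and \ref{s.alphaD}.
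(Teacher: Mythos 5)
Your outer shell is correct and matches the paper's strategy: everything is pushed onto Main Proposition~\ref{mprop.1D} and transferred via absoluteness. The one genuine deviation is part~(1), which the paper handles more directly: since $\mathcal{A}=\pi[T]$ is analytic, the statement ``$\pi[T]$ is $\mathcal{J}$-maximal'' is $\mathbf{\Pi}^1_2$ in a real coding $T$ and $\mathcal J$, so by Shoenfield absoluteness it persists to any forcing extension $V[G]$, and forcing with $\M^{\I}$ over $V$ itself already gives the contradiction---no countable model $M$ is required. Your uniform small-model route also works, and has the virtue of treating all three cases identically; what it costs is a bit more verification (that a countable $M$ with $T$ codes $\pi[T]$ correctly, i.e.\ that $M\vDash$ ``$\pi[T]$ is an infinite $\mathcal{J}$-AD family,'' which follows from absoluteness of wellfoundedness of $T_{[t]}$ but is not literally one of the clauses of Lemma~\ref{l.absoluteness}; the paper asserts this without comment in the PD case as well). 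Two small cleanups: the assertion ``$x_G\in\mathcal{J}^+$'' needs no tree encoding---since $\mathcal{J}$ is Borel, this is an arithmetic statement about the real $x_G$ and hence automatically absolute; and for part~(2) one should invoke Lemma~\ref{l.absoluteness}(\ref{catches}) just as in the paper's Corollary~\ref{cor.1D.PD} rather than a ``trivial tree.'' Otherwise the reduction is faithful and you correctly identify the Branch Lemma as the combinatorial core.
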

The first item was first shown by Mathias \cite{mathias} (at least in the case of $\fin$). The next two items are independently, and by a somewhat different method shown by Neeman and Norwood \cite{neeman-norwood} (also in the case of $\fin$).

\medskip

We use the following close relative of Mathias forcing:
\begin{definition} \label{mathiasforcing}
Suppose that $\I \supseteq \fin$ is an ideal on $\omega$, and $\I^+$ its co-ideal. Define
$$
\M^{\I} = \{(a,A) \mid a\in[\omega]^{<\omega}, A \in \I^+, \max(a)
< \min(A)\}
$$
ordered by
$$
(a',A') \leq (a,A) \text{ if and only if } a\sqsubseteq a'\cup A' \subseteq a \cup A.
$$
Of course for $X,Y\subseteq \omega$,  $X\sqsubseteq Y$ means $X = Y\cap (\max(X)+1)$.
We write $\M$ for $\M^{\fin}$.
\end{definition}

We use the following notation, which should be familiar enough:
\begin{notation}
\begin{enumerate}
\item
Given a filter $G$ on $\M^{\I}$, let
$$
x_G = \bigcup\{a \mid (\exists A \in\I^{+}) \; (a,A) \in G\}.
$$
Note that if $G$ is $\M^{\I}$-generic then $x_G \notin \fin$.
\item
For $(a,A) \in \M^{\I}$, and $b\subseteq A$
finite, let $A / b = \{n\in A \mid n > \max(b)\}$.
\item
For $p\in \M^{\I}$, we write $p=(a(p),
A(p))$ when we want to refer to its components.
\item
For $p\in \M^{\I}$, we let $\M^{\I}(\leq p) = \{q\in\M^{\I} \mid q \leq p\}$.
\end{enumerate}
\end{notation}

\begin{assumption}\label{assumption.1D}
Until the end of Section~\ref{s.1D} let $\mathcal J = \fin$ or more generally $\mathcal J=\fin(\phi)$ and assume $\A \subseteq \powerset(\omega)$ is an infinite $\mathcal J$-AD family
which is $\kappa$-Suslin. 
Fix a tree $T$ on $2\times
\kappa$ such that $\pi[T] = \A$. Let $\mathcal I$ be the ideal generated by $\mathcal A
\cup \mathcal J$.
\end{assumption}

To avoid overly cumbersome notation, we shall phrase our presentation in terms of the ideal $\fin$. %
However this section is written so that whenever relevant, the reader may replace $\fin$ (but \emph{not} the word ``finite'' or the expression $[\omega]^{<\omega}$) with $\fin(\phi)$, for any lsc submeasure $\phi$ on $\omega$, in which case she must also replace ``almost disjoint'' by ``$\fin(\phi)$-AD'', etc.
We will point out how to modify proofs when these trivial substitutions do not suffice.

\medskip

The main workload in the proof Theorem~\ref{t.main.1D} is carried by the following Main Proposition, of which we give a proof in Section~\ref{s.1D.branch} after we collect some properties of the forcing $\M^{\I}$ in Section~\ref{s.1D.properties}.
\begin{mprop}\label{mprop.1D}
$\forces_{\M^{\I}} (\forall y \in \pi[T]) \; y \cap x_{\dot G} \in \fin$.
In other words, $\forces_{\M^{\I}} x_{\dot G} \notin \pi[T]$ and $\{x_{\dot G}\}\cup
\pi[T]$ is an almost disjoint family.
\end{mprop}
Before we prove the Main Proposition, we show how easily it leads to Theorem~\ref{t.main.1D}.
Firstly, we give a very short proof of the
classical result that there are no analytic MAD families:
\begin{corollary}[\cite{mathias}]\label{cor.1D.analytic}
There are no analytic MAD families.
\end{corollary}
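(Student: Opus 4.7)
My plan is to combine Main Proposition~\ref{mprop.1D} with Shoenfield absoluteness. I would begin by assuming for contradiction that $\mathcal A$ is an infinite analytic MAD family on~$\omega$. Since the analytic subsets of $\powerset(\omega)$ are exactly the $\omega$-Suslin sets, I would fix a tree $T$ on $2 \times \omega$ with $\pi[T] = \mathcal A$, and let $\mathcal I$ denote the ideal generated by $\mathcal A \cup \fin$; because $\mathcal A$ is infinite, the lemma just before this section guarantees that $\mathcal I$ is a proper ideal, so $\M^{\mathcal I}$ is a well-defined forcing notion.

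The next step is to apply Main Proposition~\ref{mprop.1D}, which says $\Vdash_{\M^{\mathcal I}} \{x_{\dot G}\} \cup \pi[T]$ is an infinite almost disjoint family. In any generic extension $V[G]$ by $\M^{\mathcal I}$, the infinite set $x_G$ is therefore almost disjoint from every element of $\pi[T]^{V[G]}$, and in particular from every element of the ground-model family $\mathcal A$, since by upward absoluteness of analyticity one has $\mathcal A \subseteq \pi[T]^{V[G]}$. Consequently the statement
\[
\psi(T) \;:\; (\exists y \in [\omega]^\omega)(\forall z)\bigl(z \in \pi[T] \to |y \cap z| < \omega\bigr)
\]
holds in $V[G]$, witnessed by $y = x_G$.

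Finally I would descend to~$V$: $\psi(T)$ is $\Sigma^1_2$ in the real parameter~$T$, since ``$z \in \pi[T]$'' is $\Sigma^1_1(T)$ and finiteness of $y \cap z$ is arithmetic. Shoenfield absoluteness then implies $\psi(T)$ already holds in $V$, giving an infinite $y \in V$ almost disjoint from every element of $\mathcal A$. Such a $y$ cannot lie in $\mathcal A$, for otherwise $y = y \cap y$ would be finite, and hence $\mathcal A \cup \{y\}$ is a strictly larger AD family, contradicting the maximality of~$\mathcal A$. I do not anticipate any serious obstacle beyond Main Proposition~\ref{mprop.1D} itself: the crucial point that $x_{\dot G}$ is forced to be almost disjoint from \emph{every} branch of~$T$, and not only from the ground-model branches, is precisely what makes the $\Sigma^1_2$ formulation of~$\psi(T)$ available and thereby enables the Shoenfield reflection.
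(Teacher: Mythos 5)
Your proof is correct and follows essentially the same route as the paper: invoke Main Proposition~\ref{mprop.1D} to get a real in $V[G]$ almost disjoint from every member of $\pi[T]^{V[G]}$, then reflect this $\Sigma^1_2$ fact between $V$ and $V[G]$ via Shoenfield absoluteness to contradict maximality. The paper phrases the absoluteness step dually (as upward preservation of the $\Pi^1_2$ statement "$\pi[T]$ is maximal" to $V[G]$ rather than downward reflection of the $\Sigma^1_2$ non-maximality witness to $V$), but these are the two sides of the same coin.
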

\begin{proof}
Suppose $\A$ is an analytic almost disjoint family, and fix a tree $T$ on
$2\times\omega$ such that $\A= \pi[T]$ (identifying $\powerset(\omega)$ with $2^\omega$).
By Levy-Shoenfield Absoluteness 
$\pi[T]^{V[G]}$ should be maximal in any forcing extension $V[G]$ of $V$; but by
Main Proposition~\ref{mprop.1D}, there is a forcing extension $V[G]$ containing a real which is almost disjoint from any set in $\pi[T]^{V[G]}$.
\end{proof}

We likewise obtain an easy and transparent proof that under projective determinacy, there are no projective MAD families.
Here we make use of the inner model theory facts from Section~\ref{ss.inner.model.theory}.

\begin{corollary}\label{cor.1D.PD}
Under $\PD$ there are no projective MAD families.
\end{corollary}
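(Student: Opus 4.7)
The plan is to mimic the proof of Corollary~\ref{cor.1D.analytic}, replacing Shoenfield absoluteness (which suffices only for analytic families) with the inner model theoretic input of Lemma~\ref{l.innnermodeltheory.PD}. Suppose, for contradiction, that $\mathcal A$ is a projective MAD family in $V$. First I would invoke Lemma~\ref{l.innnermodeltheory.PD} to fix a transitive model $M$ of $\ZFC$ and a tree $T\in M$ on $\omega\times\kappa$ such that $\pi[T]=\mathcal A$ in $V$ and $\powerset(\powerset(\omega))^M$ is countable in $V$.

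Inside $M$, the family $\pi[T]^M \subseteq \mathcal A$ is almost disjoint, and by the properties of the scale tree provided by the lemma is infinite in $M$; so Main Proposition~\ref{mprop.1D} applies. Letting $\I^M$ denote the ideal generated in $M$ by $\pi[T]^M \cup \fin$, we obtain $M \models\, \forces_{\M^{\I^M}} x_{\dot G}$ is almost disjoint from every $y \in \pi[T]$. Since $\powerset(\powerset(\omega))^M$ is countable in $V$, the collection of dense subsets of $\M^{\I^M}$ lying in $M$ is countable in $V$, so I can select, within $V$, a filter $G$ which is $M$-generic for $\M^{\I^M}$. In $M[G]$ the real $x_G$ is then almost disjoint from every element of $\pi[T]^{M[G]}$.

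The main obstacle is the final step: transferring this conclusion from $M[G]$ up to $V$. Since $V$ is not in general a forcing extension of $M[G]$, Lemma~\ref{l.absoluteness}(3) does not apply directly. I would get around this by noting, as in the proof of that lemma, that the statement ``every $y\in \pi[T]$ has finite intersection with $x_G$'' is equivalent to the well-foundedness of an auxiliary tree $T^*(x_G)$ built from $T$ and $x_G$, which lives in $M[G]$ and is a tree on ordinals; well-foundedness of such trees is absolute between all transitive models, hence the statement survives the passage from $M[G]$ to $V$. Consequently $x_G$ is almost disjoint in $V$ from every $y\in\pi[T]^V = \mathcal A$; since $x_G$ is infinite and hence not itself in $\mathcal A$, the family $\mathcal A\cup\{x_G\}$ contradicts the maximality of $\mathcal A$.
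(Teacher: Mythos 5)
Your proof is correct and follows essentially the same route as the paper's: fix $M$ and $T$ via Lemma~\ref{l.innnermodeltheory.PD}, use the countability of $\powerset(\powerset(\omega))^M$ in $V$ to find an $M$-generic $G$ for $\M^{\I}$ as computed in $M$, apply Main Proposition~\ref{mprop.1D}, and then pull the conclusion up to $V$. You are right that the absoluteness step requires more than the literal statement of Lemma~\ref{l.absoluteness}(3), since $V$ is not a forcing extension of $M[G]$; the paper's proof invokes the lemma but in fact relies, exactly as you spell out, on the wellfoundedness-of-trees argument underlying it, which transfers between arbitrary transitive $\ZFC$-models containing the relevant tree.
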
\label{t.pd}
\begin{proof}
Assume $\PD$ holds and suppose $\mathcal A$ is an infinite almost disjoint family which is projective.
Fix a tree $T$ so that $\mathcal A=\pi[T]$ and a model $M$ as in the previous lemma.
Note that $M \vDash \pi[T]$ is an infinite almost disjoint family. 
Working inside $M$ 
let $\I$ be the ideal generated by $\fin\cup\pi[T]$ and let $\P$ denote $\M^{\I}$ in $M$. 
As $\powerset(\powerset(\omega))^M$ is countable in $V$ we may find $r \in [\omega]^\omega$ which is $\P$-generic.
By Main Proposition~\ref{mprop.1D}
\[
M[r]\vDash (\forall y \in \pi[T])\; y \text{ is almost disjoint from }r.
\]
By Item~\ref{catches} of Lemma~\ref{l.absoluteness} the statement on the right is absolute for models of ZFC and therefore holds in $V$.
Thus, $\mathcal A$ is not maximal.
\end{proof}

A similar proof can be given of the $\AD$ analogue:
\begin{corollary}\label{cor.1D.AD}
If $\eL(\R)\vDash \AD$, there are no MAD families in $\eL(\R)$. 
\end{corollary}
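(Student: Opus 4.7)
The plan is to follow the strategy of the proof of Corollary~\ref{cor.1D.PD}, working inside $\eL(\R)$ (which by hypothesis satisfies $\AD + V = \eL(\R)$), and to use Solovay's Basis Theorem to reduce matters to the situation where Lemma~\ref{l.innnermodeltheory.AD} applies. Note first that it suffices to derive a contradiction from the assumption, within $\eL(\R)$, that an infinite MAD family exists.

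My first step would be to verify that the assertion ``there exists an infinite MAD family'' is a $\mathbf{\Sigma}^2_1$ statement: it asserts the existence of a set of reals $\mathcal A \subseteq \powerset(\omega)$ such that every element of $\mathcal A$ is infinite, any two distinct members of $\mathcal A$ meet in a finite set, and for every $y \in [\omega]^\omega$ there is $x \in \mathcal A$ with $x \cap y \notin \fin$. All of these conditions are arithmetic (and hence first-order) over the structure $(\R,\mathcal A)$. By Fact~\ref{f.innnermodeltheory.reflection}, if a MAD family exists in $\eL(\R)$ then there is one which is $\mathbf{\Delta}^2_1$, and in particular $\mathbf{\Sigma}^2_1$.

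With such a $\mathbf{\Sigma}^2_1$ MAD family $\mathcal A$ in hand, I would apply Lemma~\ref{l.innnermodeltheory.AD} to obtain a model $M \vDash \ZFC$ and a tree $T \in M$ with $\pi[T] = \mathcal A$ and $\powerset(\powerset(\omega))^M$ countable in $V = \eL(\R)$. By items (\ref{inBaire}) and (\ref{branches_are_ad}) of Lemma~\ref{l.absoluteness}, $M$ still sees $\pi[T]$ as an infinite AD family, so working inside $M$ one forms the ideal $\I$ generated by $\fin \cup \pi[T]$ and the forcing $\P = \M^{\I}$. Countability of $\powerset(\powerset(\omega))^M$ in $V$ supplies a $\P$-generic real $r \in [\omega]^\omega$ over $M$. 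Invoking Main Proposition~\ref{mprop.1D} inside $M$ then yields
\[
M[r] \vDash (\forall y \in \pi[T])\; y \cap r \in \fin,
\]
and by item (\ref{catches}) of Lemma~\ref{l.absoluteness} this is absolute to $V$, contradicting the maximality of $\mathcal A$ in $\eL(\R) = V$.

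The only new obstacle relative to the $\PD$ case is ensuring that existence of a MAD family is genuinely $\mathbf{\Sigma}^2_1$ so that Fact~\ref{f.innnermodeltheory.reflection} is legitimately applied; once this is granted, the remainder of the argument is structurally identical to the proof of Corollary~\ref{cor.1D.PD}, with Lemma~\ref{l.innnermodeltheory.PD} replaced by its $\AD$ analogue.
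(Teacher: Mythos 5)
Your proposal is correct and follows the paper's proof essentially verbatim: reduce via Solovay's Basis Theorem (Fact~\ref{f.innnermodeltheory.reflection}) to a $\mathbf{\Sigma}^2_1$ MAD family, invoke Lemma~\ref{l.innnermodeltheory.AD} to get a small model $M$ with a tree $T$ representing it, and then run the argument of Corollary~\ref{cor.1D.PD} with Main Proposition~\ref{mprop.1D} and Lemma~\ref{l.absoluteness}(\ref{catches}). You usefully spell out why ``there is a MAD family'' is $\mathbf{\Sigma}^2_1$, which the paper takes for granted.
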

\begin{proof}
Suppose towards a contradiction that $V=\eL(\R)$, $\AD$ holds, and there is a MAD family.
As the existence of a MAD family is a $\Sigma^2_1$ statement, 
by Lemma~\ref{f.innnermodeltheory.reflection}, there is a $\Sigma^2_1$ MAD family $\mathcal A$.
By Lemma~\ref{l.innnermodeltheory.AD} we may pick an ordinal $\kappa$ and a tree $T$ on $\kappa\times\omega$ such that $\pi[T]= \mathcal A$.
Moreover, there is a model $M$ such that $T\in M$ and $\powerset(\powerset(\omega))^{M}$ is countable.
Now argue precisely as in Corollary~\ref{cor.1D.PD} above to show that 
$\mathcal A$ is not maximal, reaching a contradiction.
\end{proof}

\medskip

\subsection{Properties of Mathias forcing relative to an ideal}\label{s.1D.properties}

For the proof of the Main Proposition \ref{mprop.1D} in the next section, 
we need to explore the
immediate properties of the forcing notion $\M^{\I}$. 

\medskip

The following lemma holds for any ideal $\mathcal I\supseteq\fin$ and under our Assumption~\ref{assumption.1D}.
\begin{lemma}~\label{l.1D.properties}
\begin{enumerate}
\item \label{xGisAD}$\forces_{\M^{\I}} (\forall y \in \I)\; x_{\dot G} \cap y \in \fin$.
\item\label{1D.large.x_G} $\forces_{\M^{\I}} x_{\dot G} \in \fin^+$.
\item \label{isomorphicPO} Fix $A\in\I^+$ and $a_0, a_1 \in [\omega]^{<\omega}$ with $\max
(a_i) < \min (A)$ for each $i \in \{0,1\}$. Let $p_i = (a_i, A)$. Then $h\colon \M^{\I}(\leq
p_0) \to \M^{\I}(\leq p_1)$ given by
$$
h({a_0}\cup b,B) = ({a_1}\cup b, B),
$$
where $b \subseteq A$ is finite and $B \subseteq A / b$, is an isomorphism of partial orders.

\item \label{finitepartnegligible} For $p_0, p_1$ as above, $\theta$ a formula in the
language of set theory, and $v \in V$ it holds that
$$
p_0 \forces \theta(v, [x_{\dot G}]_{E_0}) \text{ if and only if }
p_1\forces \theta(v, [x_{\dot G}]_{E_0})
$$

\end{enumerate}
\label{factsaboutMI}
\end{lemma}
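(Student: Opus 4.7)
The plan is to verify (1) and (2) by density arguments, (3) by a direct algebraic check, and (4) by transporting generics using (3). The only real obstacle, in (3), is to isolate the order relation as depending only on the ``tail'' data $(b,C)$ and not on the common stem $a_i$, which is where the assumption $\max(a_i) < \min(A)$ pays off.

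For (1), let $y \in \I$ and $(a,A) \in \M^{\I}$. Since $\I$ is an ideal and $A \in \I^{+}$, we have $A \setminus y \in \I^{+}$ (else $A = (A \setminus y) \cup (A \cap y) \in \I$); so $(a, A \setminus y) \leq (a,A)$ and forces $x_{\dot G} \setminus a \subseteq A \setminus y$, hence $x_{\dot G} \cap y \subseteq a$, which is finite. Density over $(a,A)$ gives the claim. For (2), note that $A \in \I^{+}$ implies $A \notin \fin$ (since $\fin \subseteq \I$); given $k \in \omega$, pick $b \subseteq A$ with $|b| \geq k$ and observe that $(a \cup b, A/b) \leq (a,A)$ forces $b \subseteq x_{\dot G}$. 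In the $\fin(\phi)$ variant, lower semi-continuity gives $\phi(A) = \lim_n \phi(A \cap n) = \infty$, so one may instead choose finite $b \subseteq A$ with $\phi(b) > k$, yielding the stronger conclusion $\phi(x_{\dot G}) > k$.

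For (3), observe first that $(c,C) \leq (a_i, A)$ holds iff $c = a_i \cup b$ for some finite $b \subseteq A$ and $C \subseteq A/b$ with $C \in \I^{+}$ (using $\max(a_i) < \min(A)$ to split $c$ into $a_i$ and $b$). Unpacking the order further, $(a_i \cup b', C') \leq (a_i \cup b, C)$ iff $b \sqsubseteq b'$, $b' \setminus b \subseteq C$, and $C' \subseteq C$ --- a relation involving only the tail data $(b,C)$ and not $a_i$. Thus $h$ is a well-defined order-isomorphism whose inverse is the analogous map swapping $a_1$ back to $a_0$.

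For (4), let $G_0$ be $\M^{\I}$-generic with $p_0 \in G_0$ and set $G_0' = G_0 \cap \M^{\I}(\leq p_0)$. By (3), $G_1' := h[G_0']$ is $\M^{\I}(\leq p_1)$-generic, hence extends uniquely to an $\M^{\I}$-generic $G_1 \ni p_1$ with $V[G_0] = V[G_1]$. Since $a_i \cap A = \emptyset$, we have $x_{G_i} \setminus a_i = \bigcup \{b : (\exists B)\ (a_i \cup b, B) \in G_i'\}$, and the isomorphism $h$ identifies the two unions; hence $x_{G_0} \triangle x_{G_1} \subseteq a_0 \cup a_1$ is finite, so $[x_{G_0}]_{E_0} = [x_{G_1}]_{E_0}$. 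For $v \in V$, the truth value of $\theta(v, [x_{\dot G}]_{E_0})$ is therefore the same in $V[G_0]$ and $V[G_1]$, yielding the claimed equivalence of forcing relations.
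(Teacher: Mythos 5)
Your proof is correct and follows essentially the same approach as the paper: density arguments for (1) and (2), direct verification for (3), and transporting generics through $h$ for (4). The only cosmetic difference is that the paper proves (2) by contradiction (assuming some condition forces $\phi(x_{\dot G}) < n$) while you give the equivalent direct density argument, and you spell out the routine verification of (3) that the paper leaves as "immediate."
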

\begin{proof}
(\ref{xGisAD}) For any $y \in \I$, the set
\begin{align*}
D_y = \{p\in\M^{\I} \mid  A(p) \cap y = \emptyset\}
\end{align*}
is dense in $\M^{\I}$, which implies that for any generic $G$ we have $x_G \cap y \in \fin$.

\medskip
(\ref{1D.large.x_G}) We verify the general case where $\mathcal J=\fin(\phi)$.
Supposing $p\forces x_{\dot G}\in\fin(\phi)$ we can find $p' \leq p$ and $n\in\omega$ so that
$p' \forces \phi(x_{\dot G})< \check n$. Since $\phi$ is lower semi-continuous and $\phi(A(p'))=\infty$ we can find a finite set $a$ such that $a(p') \sqsubseteq a \subseteq A(p')$ and $\phi(a) > n$.
Since $(a,A(p')/a)\forces a\subseteq x_{\dot G}$ we reach a contradiction.

\medskip
(\ref{isomorphicPO}) Immediate from the definitions.

\medskip
(\ref{finitepartnegligible}) Suppose $p_1 \forces \theta(v, [x_{\dot
G}]_{E_0})$. Let $G$ be a generic such that $p_0 \in G$.  Use $h\colon \M^{\I}(\leq p_0) \to
\M^{\I}(\leq
p_1)$ from (\ref{isomorphicPO}) to obtain a generic $h(G)$ containing $p_1$. Since $x_G
E_0 x_{h(G)}$, we conclude $\theta(v, [x_{\dot G}]_{E_0})$, proving that ``if''
holds. The proof of ``only if'' is analogous.
\end{proof}

Furthermore, we have the following diagonalization result.

\begin{lemma}\label{diagonalization.1D}
Let $(A_k)_{k\in\omega}$ be a sequence from $\I^+$ 
satisfying that $A_{k+1} \subseteq A_k$ for every $k\in\omega$. Then there is
$A_{\infty} \in \I^{+}$ such that $A_{\infty} \subseteq^* A_k$ for every $k\in\omega$.
\end{lemma}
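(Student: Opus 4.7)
Plan: The plan is to construct $A_\infty$ as a diagonal union of carefully chosen finite pieces from each $A_k$, with the main challenge being to ensure $A_\infty$ lands not merely in $\mathcal J^+$ but in $\I^+$.

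For the easy step, since each $A_k \in \I^+ \subseteq \mathcal J^+$ we have $\phi(A_k) = \infty$; by the lower semi-continuity of $\phi$ combined with the fact that every finite initial segment of $\omega$ lies in $\fin \subseteq \mathcal J$ (and so removing one leaves $\phi(A_k)$ infinite), we may inductively pick a finite $b_k \subseteq A_k$ with $\phi(b_k) > k$ and $\min b_k > \max b_{k-1}$. Setting $A_\infty = \bigcup_k b_k$, the inclusion $b_j \subseteq A_j \subseteq A_k$ for $j \geq k$ gives $A_\infty \setminus A_k \subseteq \bigcup_{j < k} b_j$, a finite set, so $A_\infty \subseteq^* A_k$; and $\phi(A_\infty) \geq \sup_k \phi(b_k) = \infty$ by monotonicity of $\phi$, so $A_\infty \in \mathcal J^+$.

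The delicate step is to ensure $A_\infty \in \I^+$, not merely $\mathcal J^+$. I would use the following key observation: if $A_\infty \cap B \in \mathcal J^+$ for infinitely many distinct $B \in \mathcal A$, then $A_\infty \in \I^+$. Indeed, suppose $A_\infty \subseteq \bigcup F \cup J$ for some finite $F \subseteq \mathcal A$ and $J \in \mathcal J$; pick such a $B$ outside the finite set $F$. Then $B \cap \bigcup F = \bigcup_{A \in F}(B \cap A)$ is a finite union of sets in $\mathcal J$ by the $\mathcal J$-almost disjointness of $\mathcal A$, hence itself in $\mathcal J$; so $A_\infty \cap B \subseteq (B \cap \bigcup F) \cup J \in \mathcal J$, contradicting $A_\infty \cap B \in \mathcal J^+$.

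To arrange this, I would split into cases based on the decreasing family $F_k = \{A \in \mathcal A : A \cap A_k \in \mathcal J^+\}$. If $\bigcap_k F_k$ is infinite, choose distinct $B_i \in \bigcap_k F_k$ and refine the above construction so that each $b_k$ contains a $\phi$-positive piece of $B_i \cap A_k$ for each $i \leq k$; this ensures $A_\infty \cap B_i \in \mathcal J^+$ for every $i$, yielding the desired infinite family. Otherwise each tail $A_k$ has a large $\mathcal J$-positive portion outside $\bigcup\mathcal A$ (modulo $\mathcal J$), and one diagonalizes there, producing an $A_\infty$ which is $\mathcal J$-AD from every element of $\mathcal A$ and so automatically in $\I^+$. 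The main obstacle I expect is that $\mathcal A$ may be uncountable, preventing any direct enumeration of finite subsets $F \subseteq \mathcal A$ to diagonalize against; the sublemma above is precisely what circumvents this, by reducing $\I$-positivity to a countable ``spread'' condition that can be maintained inductively.
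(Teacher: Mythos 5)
Your easy step and Case~1 are correct in spirit (one should require $\phi(b_k \cap B_i \cap A_k) \geq k$ rather than merely ``positive'' to guarantee $\phi(A_\infty \cap B_i) = \infty$), and your ``key observation'' is exactly the right tool. However, Case~2 rests on a false claim: it is \emph{not} true that if $\bigcap_k F_k$ is finite then each tail $A_k$ has a $\mathcal J$-positive portion outside $\bigcup\A$. For instance, with $\mathcal J = \fin$ take $\A = \{C_n : n \in \omega\}$ a partition of $\omega$ into infinite pieces and $A_k = \bigcup_{n \geq k} C_n$; then $\bigcap_k F_k = \emptyset$ but $A_k \setminus \bigcup\A = \emptyset$ for every $k$, so there is nowhere to ``diagonalize outside $\bigcup\A$''. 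As you yourself flagged, $\A$ may be uncountable, so one cannot fall back on enumerating $\A$ and diagonalizing directly either.

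Fortunately the gap is easy to fill, and in a way that makes Case~2 nearly trivial. Observe that once you build \emph{any} $A_\infty \subseteq^* A_k$ for all $k$ with $A_\infty \in \mathcal J^+$ (your easy step), $A_\infty$ is \emph{automatically} $\mathcal J$-AD from every $B \notin \bigcap_k F_k$: if $B \cap A_k \in \mathcal J$, then $A_\infty \cap B \subseteq (A_\infty \setminus A_k) \cup (A_k \cap B)$, with the first piece finite and the second in $\mathcal J$. So in Case~2 the only possible witnesses to $A_\infty \in \I$ are the finitely many $D_1, \dots, D_m \in \bigcap_k F_k$; replacing $A_k$ by $A_k \setminus (D_1 \cup \cdots \cup D_m)$ (still in $\I^+$) before running the easy step, you get $A_\infty$ which is $\mathcal J$-AD from all of $\A$ and in $\mathcal J^+$, hence in $\I^+$. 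Once so repaired, your upfront dichotomy on $\bigcap_k F_k$ is a genuinely different organization from the paper's, which runs a single adaptive recursion: at each stage it attempts a diagonal pseudo-intersection $B$ avoiding finitely many elements $C_0, \dots, C_{n-1}$ of $\A$ picked so far, and if $B$ lands in $\I$ it extracts a new $C_n \in \A$ with $B \cap C_n \notin \mathcal J$, to be avoided at the next stage. Your Case~1 corresponds to that recursion running forever, your Case~2 to it terminating; the paper's adaptive version avoids having to decide in advance which situation one is in, which is precisely where your argument went wrong.
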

In both the lemma and its proof for the case $\mathcal J=\fin(\phi)$ there is no need to replace $\subseteq^*$ by
$\subseteq^*_{\fin(\phi)}$.

Also note that it follows that the preorder $( \I^{+},
\subseteq^*_{\mathcal J})$ is $\sigma$-closed.
In fact, $\I^{+}$ is a selective co-ideal---however, we will only need the statement in the
lemma.

\begin{proof}
We construct two sequences $( B_n)_{n\in \alpha}$ and  $(C_n)
_{n\in \alpha}$ of length $\alpha \leq \omega$ such that for each $n < \alpha$, 
\begin{itemize}
\item $B_n \subseteq A_n$;
\item for each $m$, $B_n \subseteq^*_{\I} A_m$;
\item $C_n \in \A\setminus\{A_i\mid i<n\}$;
\item $B_n \cap C_n \notin \fin$ and $B_n \cap C_i \in \fin$ for $i<n$.
\end{itemize}
Suppose we have found $B_i$ and $C_i$ as above for $i<n$. 
Define a sequence $m_0, m_1, \hdots$ from $\omega$ by recursion on $k$ as follows:
\[
m_k = \min\Big( A_{n+k} \setminus \big(\{ m_i  \mid i < k\} \cup
          \bigcup_{i < n} C_i\big)\Big)
\]
and let $B = \{ m_k \mid k\in \omega\}$.

In the case of $\fin(\phi)$, instead chose finite sets $M_0, M_1, \hdots$ such that $M_k\subseteq A_{n+k}\setminus\big(\bigcup_{i < n} C_i\cup M_i\big)$
and $\phi(M_k)>0$ for each $k\in\omega$.
This is possible since for each $k$, $A_{n+k} \setminus 
          \big(\bigcup_{i < n} C_i \cup M_i\big)  \in \fin(\phi)^+$.
Then let $B=\bigcup_{k\in\omega}M_k$.

\medskip

If $B \in\I^+$, we let $\alpha = n-1$ and we are done, since $B \subseteq^* A_i$ for
every $i \in \omega$.
If on the other hand $B \notin \I^+$, we let $B_n = B$; since $B \in \fin^+$ we can pick
$C_n \in \A\setminus\{C_i\mid i<n\}$ such that $B_n \cap C_n \notin \fin$.

Supposing that the construction does not end at a finite stage, let $A_{\infty} =
\bigcup_{n\in\omega}B_n\cap C_n$.  It
is clear by construction that $A_{\infty} \subseteq^* A_m$ for every $m\in\omega$.
Furthermore, since $A_{\infty}$ is an infinite union of sets not in $\fin$ which are also
subsets of distinct elements in $\A$, and the latter is an almost disjoint family, we conclude that
$A_{\infty} \in \I^+$.
\end{proof}

\begin{lemma}\label{HVDinV}
Let $HVD(X)$ denote the sets which are hereditary definable using parameters from $V
\cup \{X\}$. Then the following holds:
$$
\forces_{\M^{\I}}(\ON^{\omega})^{HVD([x_{\dot G}]_{E_0})} \subseteq V.
$$
\end{lemma}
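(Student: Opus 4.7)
The plan is to show by a density argument that the set of conditions forcing $\dot f \in V$ is dense below any condition forcing $\dot f \in (\ON^{\omega})^{HVD([x_{\dot G}]_{E_0})}$. The main tools are the $E_0$-invariance from Lemma~\ref{l.1D.properties}(\ref{finitepartnegligible}) and the $\sigma$-closure of $(\I^+, \subseteq^*)$ from Lemma~\ref{diagonalization.1D}. Fix a name $\dot f$ and suppose $p^* = (a^*, A^*) \forces \dot f \in (\ON^{\omega})^{HVD([x_{\dot G}]_{E_0})}$; by definition of $HVD$, there is a formula $\theta$ and a parameter $v \in V$ so that $p^* \forces (\forall n, \alpha)(\dot f(\check n) = \alpha \leftrightarrow \theta(\check n, \alpha, \check v, [x_{\dot G}]_{E_0}))$.

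Given an arbitrary $p = (a, A) \le p^*$, I will construct $p' \le p$ and $g \in V$ with $p' \forces \dot f = \check g$. Recursively build a $\subseteq$-decreasing sequence $(A_n)_n$ in $\I^+$ with $A_0 \subseteq A$, together with ordinals $\alpha_n$, such that $(a, A_n) \forces \dot f(\check n) = \check \alpha_n$. At step $n$, density gives some $q = (b, B) \le (a, A_{n-1})$ deciding $\dot f(\check n) = \check\alpha$ for some $\alpha$; since ``$\dot f(\check n) = \check\alpha$'' is, via $\theta$ and $v \in V$, a statement about $[x_{\dot G}]_{E_0}$, Lemma~\ref{l.1D.properties}(\ref{finitepartnegligible}) transfers the decision to $(a, B)$, and we set $A_n := B$, $\alpha_n := \alpha$. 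Applying Lemma~\ref{diagonalization.1D} then yields $A_\infty \in \I^+$ with $A_\infty \subseteq^* A_n$ for every $n$; intersecting with $A_0$ we may assume $A_\infty \subseteq A$. Define $p' := (a, A_\infty)$ and $g(n) := \alpha_n$ in $V$.

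The heart of the argument is to verify $p' \forces \dot f = \check g$. Let $G$ be $V$-generic with $p' \in G$ and fix $n$. By density pick $q \in G$ below $p'$ with $q \forces \dot f(\check n) = \check\beta$ for some ordinal $\beta$; by Lemma~\ref{l.1D.properties}(\ref{finitepartnegligible}), also $(a, A(q)) \forces \dot f(\check n) = \check\beta$. Since $A(q) \subseteq A_\infty$ and $A_\infty \setminus A_n$ is finite, the set $C := A(q) \cap A_n$ is cofinite in $A(q)$ and thus in $\I^+$; moreover $(a, C)$ is a valid condition extending both $(a, A(q))$ and $(a, A_n)$, so it forces $\dot f(\check n) = \check\beta$ and $\dot f(\check n) = \check\alpha_n$ simultaneously, whence $\beta = \alpha_n = g(n)$. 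Thus $\dot f[G] = g \in V$, and by density the lemma follows.

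The main obstacle is precisely this last verification: the output $A_\infty$ of Lemma~\ref{diagonalization.1D} is only $\subseteq^*$-below each $A_n$ rather than $\subseteq$-below, so $(a, A_\infty)$ does not directly extend $(a, A_n)$, and one must manually trim the finite discrepancy $A_\infty \setminus A_n$ to construct a common strengthening from which both forced values for $\dot f(\check n)$ can be compared.
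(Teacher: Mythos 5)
Your proof is correct and takes essentially the same route as the paper's: use Lemma~\ref{l.1D.properties}(\ref{finitepartnegligible}) to transfer the decision about $\dot f(\check n)$ to a condition with a fixed finite part, diagonalize via Lemma~\ref{diagonalization.1D}, and patch the finite $\subseteq^*$-discrepancy when comparing the decided values. The paper defers the application of (\ref{finitepartnegligible}) to the verification step and argues by contradiction with the intersection $B\cap A_{n+1}$, whereas you apply it during the construction and pass through a generic extension, but these are merely organizational differences, not a different argument.
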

\begin{proof}
Suppose $\theta(x_1, x_2, x_3, x_4)$ is a formula with all free variables shown,
$p_0\in \M^{\I}$, $a$ is arbitrary, and $\dot x$ is a $\M^{\I}$-name
such that
\begin{align*}
p_0 \forces \dot x\colon\omega \to \ON \land
            (\forall n\in\omega) (\forall \alpha\in\ON)\;
            \dot x(n) = \alpha \Leftrightarrow \theta(n,\alpha,\check a,[x_{\dot G}]_{E_0}).
\end{align*}
Let $A_0 = A(p_0)$, and build $A_0 \supseteq A_1 \supseteq A_2 \supseteq \cdots$ and
$\alpha_0, \alpha_1, \alpha_2, \dots$ a sequence of ordinals as follows: given $A_n$, find
$(b,A_{n+1}) \leq (a(p_0),A_n)$ and $\alpha_{n}$ such that
\begin{align*}
(b,A_{n+1}) \forces \theta(n,\check \alpha_{n},\check a,[x_{\dot G}]_{E_0}).
\end{align*}
Finally, find $A_{\infty}$ such that $A_{\infty} \subseteq^* A_n$ for every $n\in\omega$.

W claim that $(a(p_0),A_{\infty}) \forces (\forall n\in \omega)\; \dot x(n) = \check \alpha_n$, and thus
$\dot x \in V$.
To prove this, suppose towards a contradiction that there is $n\in \omega$ such that
$(a(p_0),A_{\infty}) \notforces\dot x(n) = \check \alpha_n$, and find $(b,B) \leq
(a(p_0),A_{\infty})$ such
that $(b,B) \forces \dot x(n) \neq \check \alpha_n$. That is, $(b,B) \forces \neg
\theta(n,\check \alpha_n,\check a,[x_{\dot G}]_{E_0})$. By 
Fact~\ref{factsaboutMI}(\ref{finitepartnegligible}), also $(a(p_0),B)\forces
\neg\theta(n,\check \alpha_n,\check a,[x_{\dot G}]_{E_0})$. However, since $B \subseteq
A_{\infty} \subseteq^* A_{n+1}$ we know that 
$(a(p_0), B \cap A_{n+1}) \leq (a(p_0),B), (a(p_0), A_{n+1})$.
This contradicts
the fact that $(a(p_0),A_{n+1}) \forces \theta(n,\check \alpha_n,\check a, [x_{\dot G}]_{E_0})$.

\end{proof}

\medskip

\subsection{The Branch Lemma}\label{s.1D.branch}

In this section we shall finally prove the Main Proposition~\ref{mprop.1D}.
We make a crucial definition (imported from \cite{toernquist}), followed by some fairly straightforward observations:

\begin{definition}\label{d.1D.T^x}
For $x \subseteq \omega$, let
\begin{align*}
T^x = \{t\in T \mid (\exists w \in [T_{[t]}]) \; \pi(w)\cap x \notin\fin\}.
\end{align*}
\end{definition}

\begin{facts}~ \label{factsaboutTx}
\begin{enumerate}

\item If $x \mathbin{E_0} z$, then $T^x = T^z$. This means that for a generic $G$, the
tree $T^{x_{G}}$ is definable from $[x_G]_{E_0}$.

\item\label{factsaboutTx.start} $T^x$ is a pruned tree on $2\times\kappa$.

\item $t\in T^x$ if and only if there is some $y\in \pi[T_{[t]}^x]$ such that $y\cap x
\notin\fin$.

\item $\emptyset \notin T^x$ is equivalent to $T^x = \emptyset$, as well as to $[T^x] =
\emptyset$, as well as to that $\{x\}\cup \A$ is not an AD family.

\item\label{factsaboutTx.end} Since $T^x$ is a subtree of $T$, $\pi[T^x]\subseteq \A$.
\end{enumerate}
\end{facts}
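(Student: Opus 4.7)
The plan is to verify the five items essentially by unfolding the definition of $T^x$; none requires substantive combinatorics. Before starting I would record once that in the general $\fin(\phi)$ reading of this section, $E_0$ is to be interpreted as $x\, E_0\, z \Leftrightarrow x \triangle z \in \fin(\phi)$, so that membership of an intersection in $\fin$ is $E_0$-invariant.

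For (1), under $x\, E_0\, z$, for every $w \in [T]$ the sets $\pi(w)\cap x$ and $\pi(w)\cap z$ differ by a subset of $x \triangle z \in \fin$, so one lies in $\fin$ iff the other does; hence $T^x = T^z$, which also means $T^{x_G}$ depends only on the $E_0$-class of $x_G$. For (2), closure under initial segments is immediate since the witness $w$ for $t \in T^x$ also lies in $[T_{[s]}]$ whenever $s \sqsubseteq t$; pruning follows because each $w \res n$ with $n \geq \lh(t)$ inherits $w$ itself as a witness, so $t$ has arbitrarily long extensions in $T^x$. For (3), the ``if'' direction is trivial from $T^x_{[t]} \subseteq T_{[t]}$; for ``only if'', given a witness $w \in [T_{[t]}]$ with $\pi(w) \cap x$ infinite, the same $w$ simultaneously witnesses $w \res n \in T^x$ for every $n \geq \lh(t)$, so $w \in [T^x_{[t]}]$ and $y := \pi(w)$ does the job.

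For (4) I would chain equivalences. Closure under initial segments gives $T^x = \emptyset$ iff $\emptyset \notin T^x$; pruning (from (2)) gives $T^x = \emptyset$ iff $[T^x] = \emptyset$, since from any nonempty pruned tree one can greedily extend initial segments to produce a branch. Finally $\emptyset \in T^x$ unpacks as the existence of $w \in [T]$ with $\pi(w) \cap x \notin \fin$, equivalently (since $\pi[T] = \A$) as the existence of $y \in \A$ with $y \cap x \notin \fin$, which is exactly the failure of $\{x\} \cup \A$ to be almost disjoint (tacitly excluding the vacuous case $x \in \A$, which does not arise in the intended application to a Mathias generic $x_G$). Item (5) is immediate from $T^x \subseteq T$ and $\pi[T] = \A$. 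The one mildly delicate point is the pruning in (2) and the attendant branch construction in (4), both of which feed the Branch Lemma later; but these require nothing beyond taking initial segments of an already-fixed $w$, so no real obstacle arises.
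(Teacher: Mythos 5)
Your arguments are correct, and since the paper states these facts without proof there is no author argument to compare against; each item is, as you observe, a direct unfolding of Definition~\ref{d.1D.T^x}, and your pruning/greedy-branch reasoning in (2) and (4) is exactly what is needed.

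One genuine discrepancy worth flagging concerns item~(4). The paper's text reads ``$\emptyset\notin T^x$ is equivalent to \ldots\ $\{x\}\cup\A$ is \emph{not} an AD family,'' but what you (correctly) establish is that $\emptyset\in T^x$ is equivalent to $\{x\}\cup\A$ failing to be AD, i.e.\ $\emptyset\notin T^x$ is equivalent to $\{x\}\cup\A$ \emph{being} AD, modulo the side conditions you point out ($x\in\fin^+$ and $x\notin\A$, which hold for a Mathias generic $x_G$). The ``not'' in the paper's item~(4) appears to be a misprint; the version you prove is the one consistent with the first three equivalences and the one actually used in the sequel, where $t_0\in T^{x_G}$ is invoked to produce some $y\in\pi[T^{x_G}_{[t_0]}]$ with $y\cap x_G\notin\fin$.

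A minor remark on your opening paragraph: reinterpreting $E_0$ as symmetric difference lying in $\fin(\phi)$ is harmless but not needed. Throughout the section $E_0$ is ordinary finite symmetric difference (it arises from the isomorphism in Lemma~\ref{l.1D.properties}(\ref{isomorphicPO}), which only changes a finite stem), and since $\fin\subseteq\fin(\phi)$ is assumed, a finite change to $x$ already leaves membership of $\pi(w)\cap x$ in $\fin(\phi)$ unchanged. So the standard reading of $E_0$ suffices even in the general $\fin(\phi)$ case, and your proof of (1) goes through verbatim under it.
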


The proof of the Main Proposition is actually based on the following Branch Lemma.

\begin{blemma}\label{l.1D.branch}
$\forces_{\M^{\I}} |\pi[T^{x_{\dot G}}]| \leq 1$.  \end{blemma}

Momentarily assuming the Branch lemma,  we can very quickly show the Main Proposition~\ref{mprop.1D}, i.e., that
$$\forces_{\M^{\I}} (\forall y \in \pi[T]) \; y \cap x_{\dot G} \in \fin$$
as follows.
\begin{proof}[Proof of the Main Proposition~\ref{mprop.1D}]
Towards a contradiction, suppose $G$ is $\M^{\I}$-generic and we have $y \in {\pi[T]}^{V[G]}$ such that $y
\cap x_G \notin \fin$. 
By the Branch Lemma $\pi[T^{x_G}] = \{y\}$. 
Thus, since $y$ is definable from $[x_{\dot G}]_{E_0}$, we have $y
\in \pi[T] \cap V \subseteq \I$ by Lemma \ref{HVDinV}. 
But then by \ref{factsaboutMI}(\ref{xGisAD}), $x_G \cap y \in\fin$, contradiction.  \renewcommand{\qedsymbol}{{\tiny Main Proposition \ref{mprop.1D}.} $\Box$}
\end{proof}

\medskip

For the proof of Theorem~\ref{t.main.1D}, it remains but to prove the Branch Lemma.
\begin{proof}[Proof of the Branch Lemma~\ref{l.1D.branch}.]
Towards a contradiction, suppose $G$ is $\M^{\I}$-generic  and we have  
distinct $x_0, x_1 \in \pi[T^{x_G}]$. Fix $n$ such that $x_0\restriction{n} \neq
x_1\restriction{n}$, and let $s_i = w_i\restriction{n}$ where $x_i = \pi(w_i)$ and
$w_i \in [T]$.

\begin{claim}\label{disjointExtension}
There exists $t_0, t_1 \in T^{x_G}$ such that
\begin{enumerate}
\item $s_i \sqsubseteq t_i$ for $i\in\{0,1\}$;
\item $\forall x_0^*, x_1^*$ such that $x_i^*\in \pi[T_{[t_i]}^{x_G}] $ it holds that $x_0^*
\cap x_1^* \subseteq \pi(t_0) \cap \pi(t_1)$.
\end{enumerate}
\end{claim}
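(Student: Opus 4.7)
The plan is proof by contradiction: assuming no such $t_0, t_1$ exist, I iteratively construct branches $w_0^*, w_1^* \in [T^{x_G}]$ whose projections are distinct elements of $\mathcal A$ with infinite intersection, contradicting the fact that $\mathcal A = \pi[T]$ remains almost disjoint in $V[G]$ by Lemma~\ref{l.absoluteness}(\ref{branches_are_ad}).

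Since $x_i \in \pi[T^{x_G}]$, I may assume the $w_i$ are chosen in $[T^{x_G}]$ (rather than merely in $[T]$), so that $s_i \in T^{x_G}$. I then build inductively nodes $t_0^{(m)}, t_1^{(m)} \in T^{x_G}$ with $s_i \sqsubseteq t_i^{(m)} \sqsubseteq t_i^{(m+1)}$, common length $N_m$ where $n = N_0 < N_1 < \cdots$, together with witnesses $k_m \in \pi(t_0^{(m+1)}) \cap \pi(t_1^{(m+1)})$ satisfying $k_m \geq N_m$. Set $t_i^{(0)} = s_i$. For the step $m \mapsto m+1$: applying the negation of the Claim to $(t_0^{(m)}, t_1^{(m)})$ yields $x_0^*, x_1^*$ with $x_i^* \in \pi[T^{x_G}_{[t_i^{(m)}]}]$ and some $k_m \in (x_0^* \cap x_1^*) \setminus (\pi(t_0^{(m)}) \cap \pi(t_1^{(m)}))$; since any such $x_i^*$ must agree with $\pi(t_i^{(m)})$ on $[0, N_m)$, necessarily $k_m \geq N_m$. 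Fix $w_i^{(m)} \in [T^{x_G}]$ with $w_i^{(m)} \supseteq t_i^{(m)}$ and $\pi(w_i^{(m)}) = x_i^*$, and set $N_{m+1} = k_m + 1$, $t_i^{(m+1)} = w_i^{(m)} \restriction N_{m+1}$.

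Let $w_i^* = \bigcup_m t_i^{(m)}$. Each initial segment of $w_i^*$ equals some $t_i^{(m)} \in T^{x_G}$, so $w_i^* \in [T^{x_G}]$ and therefore $\pi(w_i^*) \in \pi[T^{x_G}] \subseteq \mathcal A$. The projections $\pi(w_0^*)$ and $\pi(w_1^*)$ disagree already on $[0, n)$, where they coincide with $\pi(s_0)$ and $\pi(s_1)$, so they are distinct elements of $\mathcal A$. On the other hand $k_m \in \pi(w_0^*) \cap \pi(w_1^*)$ for every $m$, and $k_{m+1} \geq k_m + 1$ by construction, so $\{k_m : m \in \omega\}$ is a strictly increasing infinite subset of $\pi(w_0^*) \cap \pi(w_1^*)$. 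This contradicts Lemma~\ref{l.absoluteness}(\ref{branches_are_ad}).

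The main obstacle is that $T^{x_G}$ is a tree on $2 \times \kappa$ with $\kappa$ possibly uncountable, so there is no direct K\"onig-style compactness argument producing an infinite branch through $T^{x_G} \times T^{x_G}$ with the desired property; the inductive construction above side-steps this by explicitly tracing out branches using the witnesses supplied by each invocation of the negated Claim. For the more general setting $\mathcal J = \fin(\phi)$, producing an $\aleph_0$-infinite intersection no longer contradicts $\mathcal J$-almost disjointness on its own, and the iteration must be refined so that at each stage a finite piece of the intersection of $\phi$-mass bounded below by a fixed positive constant is added---using the lower semi-continuity of $\phi$ together with a correspondingly strengthened application of the negated Claim---in order to ensure $\phi(\pi(w_0^*) \cap \pi(w_1^*)) = \infty$.
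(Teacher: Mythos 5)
Your argument is correct and takes essentially the same route as the paper: the paper compresses the inductive construction of $w_0^*, w_1^*$ into the single phrase ``we may build branches,'' whereas you spell out exactly how the negated claim is applied at each stage and why the resulting branches lie in $[T^{x_G}]$ with infinite projection-intersection. Your closing observation---that for $\mathcal J = \fin(\phi)$ one must arrange for the accumulated intersection to have infinite $\phi$-mass rather than merely infinite cardinality, since adding one element per step need not accumulate mass---correctly flags a subtlety that the paper's blanket ``replace $\fin$ by $\fin(\phi)$'' convention elides in this particular proof.
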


\begin{proof}[Proof of claim.]
Suppose otherwise. Then for all $t_0, t_1 \in T^{x_G}$ extending $s_0,s_1$ respectively,
there exists $x_0^*, x_1^*$ such that $x_i^* \in \pi[T^{x_G}_{[t_i]}]$ and $\pi(t_0) \cap \pi(t_1)
\subsetneq x_0^* \cap x_1^*$. We may build branches $w_0^*,
w_1^*\in [T^{x_G}]$ such that $s_i \sqsubseteq w_i^*$ and
$\pi(w_0^*)\cap\pi(w_1^*)\notin \fin$. This however contradicts the fact that
$\pi[T^{x_G}]\subseteq \pi[T]$, which is an almost disjoint family.
\renewcommand{\qedsymbol}{{\tiny Claim \ref{disjointExtension}.} $\Box$}
\end{proof}

Thus, pick $t_0, t_1 \in T^{x_G}$ as in the claim, and let
\begin{align*}
y_i = \bigcup \pi[T^{x_G}_{[t_i]}], \quad i\in\{0,1\}.
\end{align*}
It must be the case that $y_0  \in V$ since $y_0$ is definable from $[x_G]_{E_0}$ (the same is true of $y_1$).
Noting $y_0  \in \fin^+$, one of the two following cases occurs:

\medskip

\textbf{Case 1:} $x_G \cap y_0 \in \fin$. 
This, however, is a contradiction; indeed, since $y_0 = \bigcup \pi[T^{x_{\dot G}}_{[t_0]}]$ where $t_0 \in
T^{x_{\dot G}}$, Facts \ref{factsaboutTx} yields the existence of a set $y \in
\pi[T^{x_{\dot G}}_{[t_0]}]$ such that $y \cap x_{\dot G} \notin \fin$.

\medskip

\textbf{Case 2:} If the first case fails, since $\{p\in \M^\I\mid A(p) \subseteq^* y_0 \vee A(p) \cap y_0 \in \fin\}$ is dense in $\M^\I$ 
we have $x_G \subseteq^* y_0$. But then $x_G \cap y_1 \in \fin$. 
This is also a contradiction, for the same reasons as above.
\renewcommand{\qedsymbol}{{\tiny Lemma \ref{l.1D.branch}.} $\Box$}
\end{proof}


\medskip

\section{Simple Fubini products}\label{s.2D}

The ideas from the previous section can be used to prove similar results about ideals
that are further up the Borel hierarchy. In this section, we will take one step up the
ladder, whilst in the following section we see that we can go all the way. 

\medskip


Recall from Section~\ref{s.intro} that
given ideals  $\mathcal J_*$, $\mathcal J_k$   on $\omega$ (for each $k\in \omega$)
we can form the ideal $\bigoplus_{\mathcal J_*} \mathcal J_k$ on $\omega\times\omega$. 
If $\mathcal J_k = \mathcal J'$ for each $k\in\omega$, one writes $\mathcal J_* \otimes \mathcal J'$ for $\bigoplus_{\mathcal J_*} \mathcal J_k$ (called the \emph{Fubini product of $\mathcal J_*$ with $\mathcal J'$}).

\medskip
We will study ideals of the form $\mathcal J=\bigoplus_{\fin(\phi)}\fin(\phi_k)$, where $\phi$ and $\phi_k$  for each $k\in\omega$ are lsc submeasures on $\omega$. 
Clearly this includes $\fin\otimes\fin$, which is
$\fin(\phi)\otimes\fin(\phi)$ where $\phi$ is the counting measure.
For $X \subseteq \omega\times\omega$ we write 
\begin{align*}
X(n) &= \{k \in \omega\mid (n,k)\in X\},\\
\dom^{\mathcal J}_{\infty}(X) &= \{n\in \omega \mid X(n) \notin
\mathcal \fin(\phi_n)\}.
\end{align*}
We write $\dom_\infty$ for $\dom^{\fin\otimes\fin}_\infty$, and note that
$$\mathcal \fin\otimes\mathcal \fin = \{X\subseteq \omega \times \omega \mid \dom_\infty(X)\in\fin \}.$$

\medskip

We will use the two following orderings on $\powerset(\omega\times\omega)$.
For
$X \subseteq\omega\times\omega$ finite and $Y \subseteq \omega\times\omega$ we say
\begin{align*}
X \initseg_2 Y \Leftrightarrow \dom(X) \initseg \dom(Y) \land (\forall n\in\dom(X)) \; X(n)
\initseg Y(n),
\end{align*}
and
\begin{align*}
X \sqsubset_2 Y &\Leftrightarrow \dom(X) \sqsubsetneq \dom(Y) \land (\forall n\in \dom(X)) \;
X(n) \sqsubsetneq Y(n)
\intertext{(of course $X\sqsubseteq Y\iff X = Y\cap (\max(X)+1)$). In the general case where $\mathcal J =  \bigoplus_{\fin(\phi)}\fin(\phi_k)$, let} 
X \sqsubset_2 Y &\Leftrightarrow 
X\sqsubseteq_2 Y \land
 \phi(\dom(X)) <  \phi(\dom(Y)) \land \\
&(\forall n\in \dom(X)) \;\phi_n(X(n))<\phi_n(Y(n)).
\end{align*}
This section was written so that most proofs generalize almost mechanically from $\fin\otimes\fin$ to the above more general case; often this is made possible by the definition of $\sqsubset_2$ given above.

\medskip
We let as usual $(\fin\otimes\fin)^+$ (resp., $\mathcal J^+$) denote the co-ideal.

\begin{definition}
Let $(\fin\otimes\fin)^{++}$ denote the set of $A \in (\fin\otimes\fin)^+$ such that
for all $k \in \dom(A)$, $A(k)\notin \fin$.

Conditions of  the forcing notion $\M_2$ are pairs $(a,A)$ where 
\begin{enumerate}[label=(\alph*)]
\item $a\subseteq\omega\times\omega$ and is finite;
\item\label{r.forcing.large} $A \in (\fin\otimes\fin)^{++}$;
\item $\max(a(k)) < \min(A(k))$ for every $k\in\dom(a)$;
\item $\dom(a) \initseg \dom(A)$.
\end{enumerate}
We let $(a',A')\leq (a,A)$ just in case $A'\subseteq A$, and $a
\sqsubseteq_2 a' \subseteq a \cup A$.

\medskip
In the general case when $\mathcal J =  \bigoplus_{\fin(\phi)}\fin(\phi_k)$, 
$\mathcal J^{++}$ denotes the set of $A \in \mathcal J^+$ such that
for all $k \in \dom(A)$, $A(k)\notin \fin(\phi_k)$.
Moreover, replace \ref{r.forcing.large} in the definition\footnote{This makes the designation somewhat ambiguous; i.e., $\M_2$ depends on the ideal $\mathcal J$ being considered. Note that the $\M^\I_2$ notation does not provide a way to refer to these variants, unlike in the previous section.} of $\M_2$ by $A \in \mathcal J^{++}$.
\end{definition}

\medskip

Note that if $(a,A)$ is a condition in $\M_2$ then for every $k\in\dom(a)$,
the pair $(a(k), A(k))$ is a Mathias forcing condition (resp., a condition in $\M^{\fin(\phi_k)}$). Moreover, the pair $(\dom(a), \dom(A))$
is a Mathias forcing condition (resp., a condition in $\M^{\fin(\phi)}$) as well. 

\medskip

As in the $1$-dimensional case, a
relativized forcing notion is needed.

\begin{definition}
If $\I^+$ is a co-ideal of an ideal $\I\supseteq\fin\otimes\fin$,
then we write $\I^{++}$ for $\I^{+} \cap (\fin\otimes\fin)^{++}$. We let
$$
\M_2^{\I}=\{(a,A)\in \M_2: A\in\I^{++}\}
$$
equipped with the ordering inherited from $\M_2$.
\end{definition}

Note that if $\I=\mathcal J$ then $\M_2^{\mathcal I}=\M_2$. Note furthermore that if $A \in
\I^+$, then we can always find a subset $B \subseteq A$ such that $B \in \I^{++}$. We need
to establish some notation:

\medskip
\begin{notation}~
\begin{enumerate}
\item
Given a filter $G$ on $\M_2^{\I}$, let
$$
x_G=\bigcup\{a:(\exists A) (a,A)\in G\}.
$$
It is easy to see $\forces x_{\dot G}\in (\mathcal J)^{++}$ when $\mathcal J=\fin\otimes \fin$; we will check this more carefully for $\bigoplus_{\fin(\phi)}\fin(\phi_k)$ below, see Lemma~\ref{l.2D.properties}(\ref{2D.large.x_G}).

%
\item
For $p\in\M^{\I}_2$, we write $p = (a(p), A(p))$ when we want to refer to the
components of $p$.
\item
For $(a,A) \in \M_2^{\I}$ and $b\subseteq a\cup A$ finite, let
\begin{align*}
A / b =
\bigcup_{n\in N} A(n) \setminus \{m\in\omega \mid m\leq \max(b(n))\}
\end{align*}
where $N=\dom(b) \cup [\max(\dom(b))+1,\infty)$.
\item For $p\in\M^{\I}_2$, let $\M^{\I}_{2}(\leq p) = \{q \in \M^{\I}_2 \mid q \leq p\}$.
\end{enumerate}
\end{notation}


\medskip
\begin{remark}
Note that in order to meaningfully talk about $\kappa$-Suslin sets in $\powerset(\omega\times\omega)$, we identify $\omega\times\omega$ with $\omega$ (via some fixed bijection), sets with their characteristic functions, and in effect, $\powerset(\omega\times\omega)$ with $2^\omega$ (see also Section~\ref{s.trees}).
\end{remark}

\begin{assumption}
Until the end of Section~\ref{s.2D} let $\mathcal J = \fin\otimes\fin$, or more generally let $\mathcal J = \bigoplus_{\fin(\phi)}\fin(\phi_k)$ as above. 
Moreover suppose $\A \subseteq \powerset(\omega\times\omega)$ to be a
$\mathcal J$-almost disjoint family which is $\kappa$-Suslin and fix a tree $T$ on $2\times\kappa$  
such that $\pi[T] = \A$. Finally, let $\I$ be the ideal generated by $\A \cup \mathcal J$.
\end{assumption}

To ease the notation, we will focus our attention on $\mathcal J=\fin\otimes\fin$. 
However, our proofs work for $\mathcal J=\bigoplus_{\fin(\phi)}\fin(\phi_k)$ as above.
For the general case, substitute $\fin\otimes\fin$ (but \emph{not} the word finite or the expression $[\omega^2]^{<\omega}$) by $ \bigoplus_{\fin(\phi)}\fin(\phi_k)$, $\dom_\infty$ by $\dom^{\mathcal J}_\infty$, etc.~wherever relevant, unless we provide commentary.

\medskip

Now we are ready to state the Main Proposition regarding $\M^{\I}_2$ from which  Theorem~\ref{t.intro.2D} follows as a corollary, precisely analogous to the previous section. 
The proof of the Main Proposition will again rely on a Branch Lemma and will be postponed until Section~\ref{s.2D.branch}.

\begin{mprop}\label{mprop.2D}
$\forces_{\M^{\I}_2} (\forall y \in \pi[T]) \; y \cap x_{\dot G} \in \fin\otimes\fin$.
\end{mprop}
As in the one-dimensional case, our main result about $\fin\otimes\fin$ also follows directly from the Main Proposition.
\begin{corollary}\label{cor.2D}
Theorem~\ref{t.intro.2D} holds.
\end{corollary}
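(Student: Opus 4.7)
The plan is to derive Corollary~\ref{cor.2D} from Main Proposition~\ref{mprop.2D} by reproducing, mutatis mutandis, the three arguments (Corollaries~\ref{cor.1D.analytic}, \ref{cor.1D.PD}, and \ref{cor.1D.AD}) that yielded Theorem~\ref{t.main.1D} from Main Proposition~\ref{mprop.1D}. Throughout, I replace $\M^\I$ with $\M^\I_2$ and the one-dimensional ideal $\fin$ (or $\fin(\phi)$) with $\mathcal J = \fin\otimes\fin$ (or $\bigoplus_{\fin(\phi)}\fin(\phi_k)$). The absoluteness statements in Lemma~\ref{l.absoluteness} are formulated for arbitrary Borel ideals on a countable set, so they apply directly to these Fubini-sum ideals without modification.

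For part~(1), I would assume towards a contradiction that $\A$ is an analytic infinite $\mathcal J$-MAD family and fix a tree $T$ on $2\times\omega$ with $\pi[T]=\A$ (using the usual identification of $\powerset(\omega\times\omega)$ with $2^\omega$). Letting $\I$ be the ideal generated by $\A\cup\mathcal J$, I would force with $\M^\I_2$. Main Proposition~\ref{mprop.2D}, together with Lemma~\ref{l.2D.properties}(\ref{2D.large.x_G}), guarantees that in $V[G]$ the generic real $x_G$ lies in $\mathcal J^+$ and is $\mathcal J$-almost disjoint from every element of $\pi[T]^{V[G]}$. Shoenfield absoluteness applied to the $\mathbf\Pi^1_2$ statement that $\pi[T]$ is $\mathcal J$-maximal then yields the desired contradiction with the maximality of $\A$ in $V$.

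For part~(2), under $\PD$, given a projective infinite $\mathcal J$-MAD family $\A$ I would invoke Lemma~\ref{l.innnermodeltheory.PD} to obtain a model $M\models\ZFC$ and a tree $T\in M$ with $\pi[T]=\A$ and $\powerset(\powerset(\omega))^M$ countable in $V$. Working inside $M$, let $\I^M$ be the ideal generated by $\pi[T]\cup\mathcal J$ and set $\P=(\M^{\I^M}_2)^M$. Countability of $\powerset(\powerset(\omega))^M$ in $V$ lets me construct a $\P$-generic filter $G$ over $M$ inside $V$; Main Proposition~\ref{mprop.2D} applied inside $M$ produces $x_G\in\mathcal J^+$ which is $\mathcal J$-AD from every member of $\pi[T]$. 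Since this last property is absolute by Lemma~\ref{l.absoluteness}(\ref{catches}), it holds in $V$, contradicting the $\mathcal J$-maximality of $\A$. Part~(3) follows by the same argument, after first reducing to a $\mathbf\Sigma^2_1$ $\mathcal J$-MAD family via Fact~\ref{f.innnermodeltheory.reflection} and invoking Lemma~\ref{l.innnermodeltheory.AD} in place of Lemma~\ref{l.innnermodeltheory.PD}.

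There is no real obstacle here beyond careful bookkeeping, since all mathematical weight has been shifted onto Main Proposition~\ref{mprop.2D}, which is the main task of the rest of the section. Lemma~\ref{l.absoluteness} handles both the transfer of the generic's $\mathcal J$-AD property across forcing extensions in part~(1) and the transfer from $M[G]$ down to $V$ in parts~(2)--(3); the only point requiring verification is that its hypothesis (that $\mathcal J$ is Borel) is satisfied, which is immediate for $\mathcal J = \bigoplus_{\fin(\phi)}\fin(\phi_k)$.
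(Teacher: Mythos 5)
Your proposal is correct and takes essentially the same approach as the paper, which simply says the three parts follow from Main Proposition~\ref{mprop.2D} by repeating the proofs of Corollaries~\ref{cor.1D.analytic}, \ref{cor.1D.PD}, and \ref{cor.1D.AD} with $\M^{\I}_2$ in place of $\M^{\I}$. You have merely spelled out the routine substitutions (using Lemma~\ref{l.absoluteness} and the inner-model lemmas exactly as in the one-dimensional case) that the paper leaves implicit.
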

\begin{proof}
The proofs are essentially identical to those of Corollary~\ref{cor.1D.analytic}, Corollary~\ref{cor.1D.PD}, and Corollary~\ref{cor.1D.AD}, simply substituting $\M^{\I}_2$ for $\M^{\I}$.
\end{proof}

\medskip

\subsection{Properties of the two-dimensional forcing}\label{s.2D.properties}

Before we can prove the Main Proposition, we shall collect some of the necessary facts  about the forcing $\M^{\I}_2$.

\begin{lemma}\label{l.2D.properties}~
\begin{enumerate}
\item\label{l.2D.properties.I} For any $A \in \mathcal I$, $\forces_{\M^{\I}_2} x_{\dot G} \cap \check A \in \fin\otimes\fin$.
\item\label{decomposition.2D}
For any $k \in \omega$ the partial order $\M_2^{\I}$ is isomorphic to the product
$\M^k \times \M_2^{\I}(\leq(\emptyset, (\omega\setminus k)\times\omega))$, where $\M^k$ is the set of $k$-tuples of
classical (1-dimensional) Mathias forcing conditions. 
In the general case where $\mathcal J = \bigoplus_{\fin(\phi)}\fin(\phi_i)$
we have
\[
\M^\I_2 \cong \Big(\prod_{i<k} \M^{\fin(\phi_i)} \Big) \times \M^{\I}_2\Big(\leq\big(\emptyset, (\omega\setminus k)\times\omega\big)\Big).
\]
\item\label{2D.large.x_G} $\forces_{\M^{\I}_2} x_{\dot G} \in (\fin\otimes\fin)^{++}$.
\end{enumerate}
\end{lemma}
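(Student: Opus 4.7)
The three items can all be proved by adapting the corresponding arguments from the one-dimensional Lemma~\ref{l.1D.properties} to the two-dimensional setting.

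For item (1), the plan is to show that for each $A \in \mathcal I$, the set $D_A := \{p \in \M^\I_2 \mid A(p) \cap A = \emptyset\}$ is dense. Given $(a, B) \in \M^\I_2$, I would form $B_1 := B \setminus A$; since $\mathcal I$ is an ideal with $B \in \I^+$ and $A \in \I$, we have $B_1 \in \I^+$. I then restrict $B_1$ to the columns that still lie in $\fin^+$ (resp., $\fin(\phi_k)^+$ in the general case), obtaining $B'' \in \I^{++}$: the discarded columns form a set in $\fin\otimes\fin$ (resp., $\bigoplus_{\fin(\phi)}\fin(\phi_k)$) and hence in $\I$, so the shrinking preserves $\I^+$-positivity, while the surviving columns satisfy the required column-wise positivity by construction. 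After trimming below $\max(a(k))$ column-wise, $(a, B'')$ lies in $D_A$ below $(a, B)$. Genericity then delivers some $p_0 \in G \cap D_A$ with $x_G \cap A \subseteq a(p_0)$, which is finite and thus in $\fin\otimes\fin$.

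For item (2), I would write down the natural decomposition map $\Phi(a, A) = ((a(i), A(i))_{i<k}, (a \setminus (k \times \omega), A \setminus (k \times \omega)))$, supplying $(\emptyset, A(i))$ whenever $i < k$ but $i \notin \dom(a)$. Verifying that $\Phi$ is an order-isomorphism onto its image (the set of conditions with $k \subseteq \dom(A)$, which coincides with the image of the inverse gluing map) is a routine coordinate-by-coordinate check: both the side condition $\max(a(i)) < \min(A(i))$ and the initial-segment condition $\dom(a) \initseg \dom(A)$ decompose cleanly into 1-dimensional parts for $i < k$ and a tail condition, and the forcing order on $\M^\I_2$ acts column-by-column.

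For item (3), I would establish two density facts forcing $x_{\dot G} \in (\fin\otimes\fin)^{++}$ (resp., $\mathcal J^{++}$ in the general case). First, for each $n$, the set of $p$ with $\dom(a(p)) \not\subseteq n$ is dense: given $p$, extend $a(p)$ by adjoining some $(k^*, m)$ with $k^* \in \dom(A(p)) \setminus n$ and $m \in A(p)(k^*)$; such $k^*$ exists because $\dom(A(p))$ is infinite, and of arbitrarily large $\phi$-mass in the general case via lower semicontinuity of $\phi$. Second, for each $k$ and $n$, the set of $p$ with either $k \notin \dom(a(p))$ or $|a(p)(k)| > n$ (resp., $\phi_k(a(p)(k)) > n$) is dense: given $p$ with $k \in \dom(a(p))$, extend $a(p)(k)$ by a finite subset of $A(p)(k)$ of sufficient cardinality (resp., $\phi_k$-mass), which exists because $A(p) \in \I^{++}$ guarantees $A(p)(k) \notin \fin$ (resp., $\notin \fin(\phi_k)$) and $\phi_k$ is lower semicontinuous. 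The step requiring the most care, in both items (1) and (3), is maintaining $\I^{++}$-positivity rather than merely $\I^+$-positivity after column-wise shrinkings; this is exactly where the stronger co-ideal $\I^{++}$ in the definition of $\M^\I_2$ earns its keep, by guaranteeing that one may drop too-small columns and still remain $\I$-positive.
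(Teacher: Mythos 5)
Your items (2) and (3) follow the paper closely; for (3) the paper instead leverages the decomposition from (2) to invoke the one-dimensional Lemma~\ref{l.1D.properties}(\ref{1D.large.x_G}) column-by-column, while you run a direct density argument, but either route works. Item (1), however, has a genuine gap. The density set $D_A = \{p \in \M^\I_2 \mid A(p) \cap A = \emptyset\}$ is \emph{not} dense, and the step you identify as needing care is exactly where the construction breaks. Take $A \in \A$ with $\{0\}\times\omega \subseteq A$ (nothing in the definitions prevents a member of $\A$, hence of $\I$, from containing an entire column), and any condition $(a,B)$ with $0 \in \dom(a)$. Then $B(0) \setminus A(0)$ is finite, so you cannot keep an infinite subset of column $0$ disjoint from $A$; but you also cannot discard column $0$, since $0\in\dom(a)$ and the requirement $\dom(a)\initseg\dom(A(p))$ must persist. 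Hence $(a,B)$ has no extension in $D_A$. The same example shows that your final conclusion ``$x_G \cap A \subseteq a(p_0)$ is finite'' is false in general: if $0\in\dom(x_G)$ and $\{0\}\times\omega\subseteq A$, then $\{0\}\times x_G(0) \subseteq x_G\cap A$, which is infinite by item~(3).

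The repair is to remember that the goal is only $x_G\cap A \in \fin\otimes\fin$, not $x_G\cap A$ finite, so finitely many ``bad'' columns are tolerable. Show instead that $D'_A := \{p \in \M^\I_2 \mid A(p)\cap A \in \fin\otimes\fin\}$ is dense. Given $(a,B)$, for each $k\in\dom(B)$ set $B'(k)=B(k)\setminus A(k)$ whenever this is infinite; if $B(k)\setminus A(k)$ is finite and $k\notin\dom(a)$ (so $k>\max\dom(a)$ by the $\initseg$ condition) drop column $k$; and if $B(k)\setminus A(k)$ is finite but $k\in\dom(a)$, keep $B'(k)=B(k)$ unchanged. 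Then $B'\in\I^{++}$ (what was removed lies in $A$ together with finitely-columned junk, hence in $\I$, so $\I^+$-positivity survives, and all surviving columns are infinite), $(a,B')\leq(a,B)$ is a valid condition, and $\dom_\infty(B'\cap A)\subseteq\dom(a)$ is finite, so $(a,B')\in D'_A$. Now if $p\in G\cap D'_A$ then $x_G\subseteq a(p)\cup A(p)$, so $x_G\cap A\subseteq(a(p)\cap A)\cup(A(p)\cap A)$, a union of two members of $\fin\otimes\fin$, which gives the claim. The same modification carries over to $\bigoplus_{\fin(\phi)}\fin(\phi_k)$.
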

\begin{proof}
(\ref{l.2D.properties.I}) By an easy density argument.

\medskip
(\ref{decomposition.2D}) Define a map $\phi\colon \M^k \times \M_2^{\I}  \to \M_2^{\I}$ by
\begin{align*}
((c_i,C_i)_{i < k},(a,A)) \mapsto
(\bigcup_{i < k} \{i\}\times c_i \cup a, \bigcup_{i < k}\{i\}\times C_i \cup  A)
\end{align*}
This map is easily seen to be bijective and order preserving. The same definition works in the general case.

\medskip
(\ref{2D.large.x_G}) Work in the general case where $\mathcal J = \bigoplus_{\fin(\phi)}\fin(\phi_i)$.
First note that $\forces_{\M^\I_2} \dom(x_{\dot G})=\dom_\infty(x_{\dot G})$:
For let $n$ and $p$ be such that $p\forces \check n\in \dom(x_{\dot G})$. It must hold that $n\in\dom(a(p))$.
By Lemma~\ref{l.1D.properties}(\ref{1D.large.x_G}), 
\[
\big(a(p)(n), A(p)(n)\big)\forces_{\M} x_{\dot G} \notin \fin(\phi_n)
\] 
so by item (\ref{decomposition.2D}) of the present lemma, $p\forces_{\M^\I_2}  \check n\in \dom_\infty(x_{\dot G})$.

It remains to show $\forces_{\M^\I_2}  \dom_\infty(x_{\dot G}) \notin \fin(\phi)$.
Towards a contradiction, suppose there is $n$ and $p$ so that $p\forces \phi(\dom(x_{\dot G})) < \check n$.
Find a finite set $d$ such that $\dom(a(p)) \sqsubseteq d\subseteq \dom(a(p))\cup\dom(A(p))$ and $\phi(d) >n$, and $a$ such that
$a(p) \sqsubseteq_2 a \subseteq a(p) \cup A(p)$ and $\dom(a)=d$.
We reach a contradiction since $(d,A(p)/d) \forces d\subseteq \dom(x_{\dot G})$ and $\phi(d) > n$. 
\end{proof}

We prove a general diagonalization result (which shall be put to use in Lemma~\ref{l.force.finite.bit} below):

\begin{lemma}\label{diagonalizing.A}
Let $(A_k)_{k\in\omega}$ be a sequence from $\I^{++}$ 
satisfying that $A_{k+1} \subseteq A_k$ for every $k\in\omega$. Then there is
$A_{\infty} \in \I^{++}$
such that $A_{\infty} \subseteq^*_{\fin\otimes\fin} A_k$ for every $k\in\omega$.
\end{lemma}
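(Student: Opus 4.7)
The plan is to adapt the proof of Lemma~\ref{diagonalization.1D} to two dimensions, building sequences $(B_n)_{n<\alpha}$ and $(C_n)_{n<\alpha}$ (with $\alpha\le\omega$) such that $C_n\in\A\setminus\{C_i\mid i<n\}$, $B_n\cap C_n\notin\fin\otimes\fin$, and $B_n$ is $\fin\otimes\fin$-almost contained in every $A_k$ while being disjoint from every $C_i$ with $i<n$. At each stage one attempts to build a candidate $B$; if $B\in\I^+$ the construction halts with $A_\infty$ a cleaned-up version of $B$, and otherwise $B\in\I$ is $\fin\otimes\fin$-almost covered by finitely many elements of $\A$, which by pigeonhole yields a new $C_n\in\A$ with $B\cap C_n\notin\fin\otimes\fin$.

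To construct $B$ at stage $n$, I would recursively pick column indices $m_0<m_1<\cdots$ and infinite sets $B_k\subseteq\omega$ so that $m_k\in\dom(A_{n+k})\setminus\{m_i\mid i<k\}$ and $B_k\subseteq A_{n+k}(m_k)\setminus\bigcup_{i<n}C_i(m_k)$, and then set $B=\bigcup_{k\in\omega}\{m_k\}\times B_k$. The combinatorial input needed at each step is that the set
\[
D_{n+k}=\{m\in\omega\mid A_{n+k}(m)\setminus\bigcup_{i<n}C_i(m)\text{ is infinite}\}
\]
is itself infinite; otherwise $A_{n+k}\setminus\bigcup_{i<n}C_i\in\fin\otimes\fin$ would force $A_{n+k}\in\I$, contradicting $A_{n+k}\in\I^+$. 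For the general case $\mathcal J=\bigoplus_{\fin(\phi)}\fin(\phi_k)$ one picks finite sets of positive $\phi_{m_k}$- and $\phi$-submeasure in place of single elements, exactly as in the proof of Lemma~\ref{diagonalization.1D}.

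Directly from the construction one reads off $B\in(\fin\otimes\fin)^{++}$, $B\cap C_i=\emptyset$ for $i<n$, $B\subseteq A_j$ for $j\le n$, and $B\setminus A_j\subseteq\bigcup_{k<j-n}\{m_k\}\times B_k$ for $j>n$; in particular $B\subseteq^*_{\fin\otimes\fin}A_k$ for every $k$. If $B\in\I^+$ we set $A_\infty=B$ and halt; otherwise extract $C_n$ as above (noting $C_n\neq C_i$ for $i<n$ because $B\cap C_i=\emptyset$ while $B\cap C_n\notin\fin\otimes\fin$) and iterate.

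If the iteration never halts, set $A'_\infty=\bigcup_{n\in\omega}(B_n\cap C_n)$. Since $B_n\subseteq A_k$ for $n\ge k$, we immediately get $A'_\infty\subseteq^*_{\fin\otimes\fin}A_k$. The main obstacle, and the only genuinely new ingredient compared to the 1D proof, will be to certify $A'_\infty\in\I^{++}$: the membership $A'_\infty\in\I^+$ follows because any finite cover $F_0,\ldots,F_{l-1}\in\A$ of $A'_\infty$ modulo $\fin\otimes\fin$ would, by pigeonhole, force some $F_j$ with $B_n\cap C_n\cap F_j\notin\fin\otimes\fin$ for infinitely many $n$, which contradicts the $\fin\otimes\fin$-almost disjointness of $\A$ together with the distinctness of the $C_n$. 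Finally I would restrict $A'_\infty$ to the (infinitely many) columns on which it is infinite, discarding a $\fin\otimes\fin$-small remainder, to obtain the required $A_\infty\in\I^{++}$ still satisfying $A_\infty\subseteq^*_{\fin\otimes\fin}A_k$ for every $k$.
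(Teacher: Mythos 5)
Your proof is correct and takes essentially the same approach as the paper: at each stage build a candidate $B\in(\fin\otimes\fin)^{++}$ that is $\subseteq^*_{\fin\otimes\fin}$-below every $A_k$ and avoids the previously chosen $C_i\in\A$; if $B\in\I^+$ halt, otherwise extract a fresh $C_n\in\A$ with $B\cap C_n\in(\fin\otimes\fin)^+$; and if the construction runs for $\omega$ stages, take $\bigcup_n(B_n\cap C_n)$. You also correctly supply two details that the paper relegates to the reader by saying the rest is ``essentially identical to the 1-dimensional case'' --- namely the almost-disjointness-plus-pigeonhole argument that the diagonal union lies in $\I^+$, and the final trimming (discard columns with finite fibers) needed to pass from $\I^+$ to $\I^{++}$.
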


In other words, $(\I^{++},
\subseteq^*)$ is $\sigma$-closed.
In a certain sense $\I^{+}$ is even a selective co-ideal, a fact which will be more or less implicit in the proof of Lemma~\ref{exploringTx} below.

\begin{proof}
As in the previous section, we construct two sequences $( B_n)_{n\in \alpha}$ and  $(C_n)
_{n\in \alpha}$ of length $\alpha \leq \omega$ such that for each $n < \alpha$, 
\begin{itemize}
\item $B_n \in (\fin\otimes\fin)^{++}$;
\item $B_n \subseteq A_n$ and $(\forall k\in\omega)\; B_n \subseteq^*_{\fin\otimes\fin} A_k$;
\item $C_n \in \A\setminus\{C_i\mid i<n\}$;
\item $B_n \cap C_n \in (\fin\otimes\fin)^+$ and $B_n \cap C_i \in \fin\otimes\fin$ for $i<n$.
\end{itemize}
Suppose we have found $B_i$ and $C_i$ as above for $i<n$. 
Define a sequence $m^n_0, m^n_1, \hdots$ from $\omega$ by recursion on $k$ as follows:
\[
m^n_k = \min\Big( \dom\big(A_{n+k} \setminus \big(\{ m_i  \mid i < k\} \cup
          \bigcup_{i < n} C_i\big)\big)\Big)
\]
and let $B = \bigcup_{k\in\omega} A_{n+k}(m^n_k)$.

In the case of $\fin(\phi)$, instead chose finite sets $M^n_0, M^n_1, \hdots$ such that 
$M^n_k\subseteq \dom\big(A_{n+k}\setminus\bigcup_{i < n}(C_i \cup M^n_i)\big)$
and $\phi(M^n_k)>0$ for each $k\in\omega$.
Then let 
\[
B=\bigcup_{k\in\omega}\bigcup_{m\in M^n_k} A_{n+k}(m).
\]
The remainder of the proof is essentially identical to the 1-dimensional case, i.e., Lemma~\ref{diagonalization.1D},
simply replacing $\fin$ by $\fin\otimes\fin$ everywhere.
We leave this to the reader.

\end{proof}

\medskip

\subsection{The two-dimensional Branch Lemma}\label{s.2D.branch}

The crucial definition is again that of an invariant tree, analogous to Definition~\ref{d.1D.T^x}.
\begin{definition}\label{d.2D.tree}
For $x \subseteq \omega\times\omega$, let
\[
T^x = \{t\in T\mid (\exists y \in \pi[T_{[t]}])\; y \cap x \notin \fin\otimes\fin\}
\]
\end{definition}
As in Section~\ref{s.1D.branch}, it is easy to see that whenever $x \Delta x' \in \fin\otimes\fin$, $T^x = T^{x'}$.
Moreover Facts~\ref{factsaboutTx}(\ref{factsaboutTx.start})--(\ref{factsaboutTx.end}) hold here as well.

\medskip

We are now ready to state the main lemma of this section.

\begin{blemma}\label{l.2D.branch}
$
\forces_{\M^{\I}_2} |\pi[T^{x_{\dot G}}]| \leq 1.
$
\end{blemma}

We postpone the proof of the Branch Lemma and first give the proof of the Main Proposition \ref{mprop.2D}, assuming the lemma. The proof is not quite as straightforward as in the previous section, but the idea remains the same.

\begin{proof}[Proof of the Main Proposition~\ref{mprop.2D}]
Suppose towards a contradiction there is $p_0\in \M^{\I}_2$ forcing that 
there is $A \in \pi[T]^{V[G]}$ with $A \cap x_G \notin \fin\otimes\fin$.
By the Branch Lemma~\ref{l.2D.branch}, $p_0$ forces that $\pi[T^{x_G}]$ has precisely one element;
let $\dot A$ be a name for it.
\begin{claim}\label{c.inV.dim2}
There is $q\in \M^{\I}_2$ and $A'\in V$ such that $q\forces\dot A=\check A'$.
\end{claim}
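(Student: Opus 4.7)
The plan is to prove a two-dimensional analogue of Lemma \ref{HVDinV} and apply it to $\dot A$. Since $T^{x_G}$ (and hence its unique branch $\dot A$) depends only on the $\fin\otimes\fin$-equivalence class of $x_G$, the statement ``$n\in\dot A$'' for $n\in\omega\times\omega$ is definable from $[x_G]_{\fin\otimes\fin}$ together with the parameter $T\in V$. It therefore suffices to show that below $p_0$ some condition forces such a name to equal the check-name of a ground-model set.

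First I would establish the 2D ``finite-part-negligible'' property: for $p,q\in \M^\I_2$ with $A(p)=A(q)$, and any formula $\theta$ with parameter from $V$,
\[
p \forces \theta([x_{\dot G}]_{\fin\otimes\fin}) \iff q \forces \theta([x_{\dot G}]_{\fin\otimes\fin}).
\]
This follows exactly as in Lemma \ref{l.1D.properties}(\ref{finitepartnegligible}): the map $(a(p)\cup b,B)\mapsto (a(q)\cup b,B)$ is an order-isomorphism of $\M^\I_2(\leq p)$ onto $\M^\I_2(\leq q)$, and the two corresponding generic reals have symmetric difference $a(p)\Delta a(q)$, which is finite and hence in $\fin\otimes\fin$.

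Next, enumerate $\omega\times\omega$ as $(n_k)_{k\in\omega}$ and construct recursively a decreasing sequence $A(p_0)=A_0 \supseteq A_1 \supseteq A_2 \supseteq \cdots$ in $\I^{++}$ together with $b_k\in\{0,1\}$ such that $(a(p_0), A_{k+1}) \forces$ ``$n_k\in\dot A$'' $\Leftrightarrow$ $b_k=1$. At each step, extend $(a(p_0), A_k)$ arbitrarily to decide ``$n_k\in\dot A$'', then use the homogeneity above to shift the stem back to $a(p_0)$, yielding $(a(p_0), A_{k+1})$ with $A_{k+1}\subseteq A_k$. Then apply Lemma \ref{diagonalizing.A} to obtain $A_\infty\in\I^{++}$ with $A_\infty\subseteq^*_{\fin\otimes\fin} A_k$ for every $k$, and set $q=(a(p_0), A_\infty)$ (trimmed so that $q\leq p_0$) and $A'=\{n_k : b_k=1\}$.

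To verify $q\forces\dot A = \check A'$, suppose some $q'\leq q$ decides ``$n_k\in\dot A$'' opposite to $b_k$. Since $A(q')\subseteq A_\infty\subseteq^*_{\fin\otimes\fin} A_{k+1}$ one can pass to a common strengthening of $q'$ and $(a(p_0),A_{k+1})$ (aligning stems via the homogeneity), contradicting the defining property of $A_{k+1}$. The main technical obstacle I expect lies in this final step: because $A_\infty$ agrees with $A_{k+1}$ only modulo $\fin\otimes\fin$, one must carefully trim $A(q')$ of a $\fin\otimes\fin$ piece—discarding the finitely many columns that would become finite after trimming—so that the remaining set still lies in $\I^{++}$ and supports a legitimate condition of $\M^{\I}_2$. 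Verifying that this trimming preserves the $\fin\otimes\fin$-invariance of $\dot A$, and thus the decision about ``$n_k\in\dot A$'', is the delicate combinatorial point.
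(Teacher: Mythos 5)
Your overall architecture (decide each $(n,m)\in\dot A$ on a shrinking sequence of tails with the stem fixed at $a(p_0)$, then diagonalize via Lemma~\ref{diagonalizing.A}) matches the paper's, but the engine you use to shift the stem back is where the argument breaks. Your proposed ``2D finite-part-negligible'' lemma rests on the claim that for $p,q\in\M^{\I}_2$ with $A(p)=A(q)$, the map $(a(p)\cup b,B)\mapsto(a(q)\cup b,B)$ is an order-isomorphism of $\M^{\I}_2(\leq p)$ onto $\M^{\I}_2(\leq q)$. That is true in one dimension (Lemma~\ref{l.1D.properties}(\ref{isomorphicPO})) but false in two: a condition of $\M^{\I}_2$ must satisfy $\dom(a)\sqsubseteq\dom(A)$, and this constraint is \emph{not} invariant under swapping the stem. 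Concretely, suppose $\dom(a(p))=\{0\}$ and $\dom(a(q))=\{0,1\}$. Take $(a(q)\cup b,B)\leq q$ in which $b$ contributes nothing to column $1$ but does contribute to column $2$; then writing this as $a(q)\sqcup c$ and passing to $(a(p)\sqcup c,B)$ gives $\dom(a(p)\sqcup c)=\{0,2\}$, which fails $\sqsubseteq\dom(B)$ whenever $1\in\dom(B)$. So the map is not a bijection, and the lemma cannot be established this way. The case you actually need --- replacing $a(p)$ by $a(p_0)\sqsubseteq_2 a(p)$ with $\dom(a(p_0))\subsetneq\dom(a(p))$ --- is exactly the problematic one.

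The paper sidesteps this by not moving a finite stem at all. Instead it uses the product decomposition $\M^{\I}_2\cong\M^k\times\M^{\I}_2\big(\leq(\emptyset,(\omega\setminus k)\times\omega)\big)$ from Lemma~\ref{l.2D.properties}(\ref{decomposition.2D}), taking $k$ large enough that $\dom(a(p))\subseteq k$. Given a generic $G\ni(a(p_0),A(p))$, it writes $G=G_0\times G_1$ and observes that $T^{x_G}$, hence the unique branch $\dot A^G$, depends only on $G_1$ (because replacing $G_0$ changes $x_G$ by a subset of $k\times\omega$, which is in $\fin\otimes\fin$). Then it produces a different $G'_0$ over $V[G_1]$ so that $p\in G'_0\times G_1$, and concludes $\dot A^{G'}=\dot A^G$, hence $(n,m)\in\dot A^G$. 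The key point is that the product factorization allows changing \emph{entire columns} $0,\dots,k-1$, not just a finite chunk of each; this is exactly what the $\fin\otimes\fin$-invariance of $T^x$ permits, and it avoids the domain-matching problem your stem-swap runs into. The secondary worry you flag at the end --- that $A_\infty\subseteq^*_{\fin\otimes\fin}A_{k+1}$ is only inclusion modulo the ideal --- is also real and would require separate handling, but the primary gap is in the homogeneity step.
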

\begin{proof}[Proof of Claim.]
It suffices to show that if $p \leq p_0$ and
$p$ decides $(n,m)\in \dot A$ then in fact $(a(p_0),A(p))$ decides $(n,m)\in\dot A$:
For then we may pick $A_0 \supseteq A_1 \supseteq \hdots$ such that for each pair $(n,m) \in \omega\times\omega$, some $(a(p_0),A_k)$ decides $(n,m)\in\dot A$; by Lemma~\ref{diagonalizing.A} we can find $A_\infty$ diagonalizing $(A_k)_{k\in\omega}$.
Any condition below $q=(a(p_0),A_\infty)$ is compatible with each $(a(p_0),A_k)$, and 
so $q$ decides all of $\dot A$.

\medskip

So suppose $p\leq q$ decides $(n,m)\in\dot A$; we must show $(a(p_0),A(p))$ decides $(n,m)\in\dot A$. 
Let us suppose that $p\forces (n,m)\in \dot A$; the proof is similar in case $p\forces (n,m)\notin \dot A$ and we leave this case to the reader. 

Fix any $\M^{\I}_2$-generic  $G$ such that $(a(p_0),A(p))\in G$. 
By Lemma~\ref{l.2D.properties}(\ref{decomposition.2D}) we can decompose $G$ as $G_0 \times G_1$ where $G_1$ is $\M^{\I}_2$-generic and $G_0$ is $\M^k$-generic for $k$ large enough so that $\dom(a(p))\subseteq k$.
Note that as $x_{G} \Delta x_{G_1} \in \fin\otimes\fin$, $T^{x_{G}} = T^{x_{G_1}} \in V[G_1]$. 
Since $V[G]\vDash \pi[T^{x_{G}}]= \{\dot A^{G}\}$, by a simple absoluteness argument the same must hold in $V[G_1]$, i.e., $\dot A^{G} \in \dot V[G_1]$ and $V[G_1]\vDash \pi[T^{x_{G}}]=\{\dot A^{G}\}$.

Since $\dom(a(p)) \subseteq k$ we can find $G'$ which is $\M^{\I}_2$-generic over $V$ such that $G'=G'_0 \times G_1$ and 
$p \in G'$. 
Clearly $(n,m)\in\dot A^{G'}$ (since $p \forces (n,m) \in \dot A$).
Arguing as before using absoluteness, this time between $V[G']$ and $V[G_1]$, 
$\dot A^{G'}$ must equal the unique element of $\pi[T^{x_{G_1}}]$, i.e., $\dot A^{G'}=\dot A^{G}$ and so $(n,m)\in\dot A^{G}$. 
Since $G$ was arbitrary, $(a(p_0), A(p)) \forces (n,m)\in \dot A^{G}$. 
\renewcommand{\qedsymbol}{{\tiny Claim \ref{c.inV.dim2}.} $\Box$}
\end{proof}
Now $A'\in \pi[T] \cap V$ and thus  $A'\in \mathcal I$,  but also
$q \forces x_{\dot G} \cap \check A' \notin\fin\otimes\fin$, contradicting Lemma~\ref{l.2D.properties}(\ref{l.2D.properties.I}).
\renewcommand{\qedsymbol}{{\tiny Main Proposition \ref{mprop.2D}.} $\Box$}
\end{proof}

\medskip

We now gradually work towards the proof of the Branch Lemma, for which it is necessary to introduce some notation.
Firstly, write
\[
U = [\omega\times\omega]^{<\omega}\times T.
\]
Given a pair $\vec u \in U$, we write it as $(a(\vec u), t(\vec u))$ if we
want to refer to the components of $\vec u$.

We define a partial order $\leq_{U}$ on $U$ 
as follows: 
\begin{align*}
\vec u_1 \leq_U \vec u_0 \Leftrightarrow a(\vec u_1) \sqsupseteq_{2} a(\vec u_0)
\land t(\vec u_1) \sqsupseteq t(\vec u_0).
\end{align*}

\medskip

Now secondly assume $G$ is $\M^{\I}_2$-generic over $V$;
working in $V[G]$ for the moment and for a fixed $x \in \powerset(\omega\times\omega)$, define the set
$U^{x}\subseteq U$ consisting of those pairs
$(a,t)\in U$ such that there is $w \in [T_{[t]}]$ with
\begin{enumerate}
\item $\pi(w)\cap x \notin \fin\otimes\fin$, 
\item $\dom(a) \subseteq \dom_{\infty}(\pi(w)\cap x)$ and
\item for each $k \in \dom(a)$, $a(k) \subseteq \pi(w)(k)\cap x(k)$.
\end{enumerate}

Intuitively, $U^{x}$ searches for a branch through $T$ whose projection has large
intersection with $x$ and a subset of this intersection in $(\fin\otimes\fin)^{++}$ to
witness its largeness.

In analogy to the tree $T^x$, when $\vec u_0\in U$ write $U^{x}_{[\vec u_0]}$ for 
$\{\vec u\in U \mid \vec u \leq_U \vec u_0\}$.

\medskip

The following three lemmas gather some observations concerning $U^{x_G}$ which will be important in the proof of the Branch Lemma.

\begin{lemma} \label{exploringTx} Suppose $(a,A) \forces \vec u \in U^{x_{\dot G}}$.
\begin{enumerate}

\item \label{a(u)-subset} It holds that $a \supseteq a(\vec u)$.

\item \label{infiniteDomain} The set $A' \subseteq \omega\times\omega$ defined by
\begin{align*}
A'=\{(k,l)\mid (\exists p' \leq (a,A))(\exists \vec u'\leq_U \vec u)\; (k, l) \in a(\vec u') \land
p'\forces \vec u' \in U^{x_{\dot G}}\}
\end{align*}
is not in $\I$.

\item \label{infiniteExtensionInK}For any $k\in \dom(a(\vec u))$, the set $A_k
\subseteq\omega$ defined by
\begin{align*}
\{l \mid (\exists p' \leq (a,A))(\exists \vec u'\leq_U \vec u)\; l\in a(\vec u')(k) \land
p'\forces \vec u' \in U^{x_{\dot G}}\}
\end{align*}
is not in $\fin$ (resp., in $\fin(\phi_k)$).
\end{enumerate}
\end{lemma}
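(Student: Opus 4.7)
The plan is to prove (1) directly from the forcing definition and (2), (3) by density arguments inside a generic extension.

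For (1), the key observation is that for every $(k,l) \in a(\vec u)$, the definition of $U^{x_G}$ implies $l \in x_G(k)$ in every generic $V[G]$ containing $(a,A)$, so $(a,A) \forces (k,l) \in x_{\dot G}$. If however $(k,l) \notin a$, then the condition $(a,\, A \setminus \{(k,l)\})$ — which remains in $\M^{\I}_2$ since removing a single point preserves membership in $\I^{++}$ — extends $(a,A)$ and forces $(k,l) \notin x_{\dot G}$, a contradiction. Hence $a(\vec u) \subseteq a$.

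For (3), fix $k \in \dom(a(\vec u))$ and a generic $G \ni (a,A)$. In $V[G]$ pick a witness $w \in [T_{[t(\vec u)]}]$ for $\vec u \in U^{x_G}$; since $k \in \dom(a(\vec u)) \subseteq \dom^{\mathcal J}_\infty(\pi(w) \cap x_G)$, the column $(\pi(w) \cap x_G)(k)$ is not in $\fin(\phi_k)$. For each $l$ in this column with $l > \max a(\vec u)(k)$ I set $\vec u' := (a(\vec u) \cup \{(k,l)\},\, t(\vec u))$; then $\vec u' \leq_U \vec u$, and $w$ itself still witnesses $\vec u' \in U^{x_G}$. By the forcing theorem some $p' \leq (a,A)$ in $G$ forces $\vec u' \in U^{x_{\dot G}}$, putting $l$ into $A_k$. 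Thus $A_k$ contains a tail of $(\pi(w) \cap x_G)(k)$, which is not in $\fin(\phi_k)$; as $A_k \in V$ and the statement ``$A_k \notin \fin(\phi_k)$'' is absolute, this holds already in $V$.

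For (2) I would combine the density argument from (3) across many $k$ with $\I$-maximality. Suppose towards contradiction $A' \in \mathcal I$ and fix $B_0,\ldots,B_n \in \mathcal A$ and $J \in \mathcal J$ with $A' \subseteq \bigcup_i B_i \cup J$. In a generic $V[G] \ni (a,A)$, let $w$ witness $\vec u \in U^{x_G}$. Since each $B_i \in \mathcal I$, Lemma~\ref{l.2D.properties}(\ref{l.2D.properties.I}) gives $B_i \cap x_G \in \mathcal J$, while $\pi(w) \cap x_G \notin \mathcal J$, so $\pi(w) \neq B_i$ for every $i$; then $\mathcal J$-almost disjointness of $\mathcal A$ forces $\pi(w) \cap A' \in \mathcal J$. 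On the other hand, the construction in (3), now also allowing \emph{new} indices $k > \max\dom(a(\vec u))$ by setting $a(\vec u')(k) := \{l\}$, shows that for every such $k \in \dom^{\mathcal J}_\infty(\pi(w) \cap x_G)$ and every $l \in (\pi(w)\cap x_G)(k)$ we have $(k,l) \in A'$. Since $\dom^{\mathcal J}_\infty(\pi(w)\cap x_G) \notin \fin(\phi)$ and only a finite initial segment is excluded, the resulting subset of $\pi(w) \cap A'$ does not lie in $\mathcal J$, contradicting the previous sentence. The main obstacle I anticipate is verifying that all the constructed $\vec u'$ genuinely lie in $U^{x_G}$ — this is handled cleanly by reusing the single witness $w$ throughout and taking $t(\vec u') = t(\vec u)$, since $\leq_U$ imposes no growth on the tree component.
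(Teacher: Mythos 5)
Your proof is correct, but for items (\ref{infiniteDomain}) and (\ref{infiniteExtensionInK}) it takes a genuinely different route from the paper's. The paper argues by contradiction entirely in the ground model: assuming $A'\in\mathcal I$ (resp.\ $A_k\in\fin(\phi_k)$), it passes to the subcondition $(a,B)$ with $B\in\mathcal I^{++}$ disjoint from $A'$ (resp.\ with $B(k)$ disjoint from $A_k$), forces a new pair $(k,l)$ into $\dot w\cap x_{\dot G}\setminus a(p)$, and observes that this pair must land both in $B$ and (by definition) in $A'$ (resp.\ $A_k$) --- an immediate contradiction. You instead work directly in $V[G]$: you fix a single witness $w$ for $\vec u\in U^{x_G}$, reuse it for every one-point extension $\vec u'=(a(\vec u)\cup\{(k,l)\},t(\vec u))$, and conclude via the forcing theorem and absoluteness that $A'$ (resp.\ $A_k$) is already $\mathcal I$-positive (resp.\ $\fin(\phi_k)$-positive) in $V$. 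Your approach to (\ref{infiniteExtensionInK}) is arguably cleaner, since it establishes largeness directly rather than shrinking to refute smallness. For (\ref{infiniteDomain}), however, your route is more laborious: because you argue for positivity rather than for a contradiction, you need the additional ingredients (Lemma~\ref{l.2D.properties}(\ref{l.2D.properties.I}) plus the absoluteness of ``$\pi[T]$ is $\mathcal J$-AD'') to separate $\pi(w)$ from the finitely many generators $B_0,\dots,B_n$ of $A'$. The paper's shrinking trick bypasses this entirely: once $B$ is chosen disjoint from $A'$, no almost-disjointness argument is needed. One minor expository point worth making explicit in your write-up of (\ref{infiniteDomain}): when you say the forcing theorem ``puts $l$ into $A_k$'', you should note that the $p'$ provided by the forcing theorem must first be strengthened inside $G$ to lie below $(a,A)$, as the definition of $A'$ (and $A_k$) quantifies only over $p'\leq(a,A)$.
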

\begin{proof}
(\ref{a(u)-subset}) Immediate from the definition of $U^{x_{\dot G}}$.

\medskip
(\ref{infiniteDomain})
Assume to the contrary that $A' \in \mathcal I$. Then $A \setminus A' \in \I^{+}$, so
take $B \subseteq A \setminus A'$ such that $B \in \I^{++}$ and set $p = (a,B) \in \M^{\I}_2$.
Since $p \forces \vec u \in U^{x_{\dot G}}$ we can find a name $\dot w$ such that
\[
p \forces \dot w \in \pi[T_{[t(\vec u)]}] \land \dot w\cap x_{\dot G} \notin \fin\otimes\fin.
\]
(In fact, all we need here is that $p\forces T^{x_{\dot G}}\neq\emptyset$).
Thus we can extend $p$ to $p'$ to force a pair $(k,l)$ into $\dot w\cap x_{\dot G}\setminus a(p)$.
But it has to be the case that $(k, l) \in a(p')$, whence $(k, l) \in A'$ by definition of $A'$,
contradicting that also $(k,l) \in B$  which is disjoint from $A'$.

\medskip
(\ref{infiniteExtensionInK})
Assume to the contrary that $k\in \dom(a(\vec u))$ and $A_k \in \mathcal \fin$. Take $B \subseteq A\setminus (\{k\} \times A_k)$ such that $B \in \I^{++}$,
and set $p = (a,B) \in \M^{\I}_2$.
Since $p \forces \vec u \in U^{x_{\dot G}}$ we can find a name $\dot w$ such that
\[
p \forces \dot w \in \pi[T_{[t(\vec u)]}] \land  \dot w\cap x_{\dot G} \in (\fin\otimes\fin)^{+}.
\]
and 
\[
p \forces  \dom(a(\vec u))\subseteq \dom_\infty(\dot w\cap x_{\dot G}).
\]
As $k \in \dom_\infty(\dot w \cap x_{\dot G})$, we can extend $p$ to $p'$ to force a pair $( k,l)$ into $\dot w\cap x_{\dot G}\setminus a(p)$.
But as in the proof of the previous item, it has to be the case that $( k, l ) \in a(p')$, whence $l  \in A_k$ by definition of $A_k$,
contradicting that also $ l\in B(k)$ which is disjoint from $A_k$.
\end{proof}

\medskip

In order to prove the two-dimensional Branch Lemma, we also need to introduce the partially ordered
set $\Gamma$ defined as follows: 
\[
\Gamma = \{(p, \vec u^0, \vec u^1) \in \M^{\I}_2 \times U \times U \mid (\forall
i \in \{0,1\}) \; p \forces \vec u^i \in U^{x_{\dot G}}\}.  \]
This set carries a weak and a strict order, defined as follows:
\[
(p_1, \vec u^0_1, \vec u^1_1) \leq_\Gamma (p_0, \vec u^0_0, \vec u^1_0)
\]
if and only if $p_1 \leq p_0$, and for each $i\in\{0,1\}$, $a(u^i_1) \sqsupseteq_2 a(u^i_0)$ and $t(u^i_1) \sqsupseteq t(u^i_0)$
(that is, $u^i_1 \leq_U u^i_0$);
and 
\[
(p_1, \vec u^0_1, \vec u^1_1) \prec_\Gamma (p_0, \vec u^0_0, \vec u^1_0)
\]
if and only in addition, $a(\vec u_0^0)\cap a(\vec u_0^1) \sqsubset_2 a(\vec
u_1^0) \cap a(\vec u_1^1)$.

Note that $\Gamma$ is well-founded with respect to the second, strict ordering
$\prec_{\Gamma}$; indeed, suppose towards a contradiction that there is an infinite
$\prec_\Gamma$-descending sequence \[
\hdots \prec_\Gamma
(p_3, \vec u^0_3, \vec u^1_3)\prec_\Gamma (p_2, \vec u^0_2, \vec u^1_2) \prec_{\Gamma}
(p_1, \vec u^0_1, \vec u^1_1) \]
from $\Gamma$.
Define
\begin{align*}
y^i &= \bigcup_{n>1} t(\vec u^i_n)\\
\intertext{for $i\in\{0,1\}$ and}
A &= \bigcup_{n>1} a(\vec u^0_n)\cap a(\vec u^1_n).
\end{align*}
Since the sequence is $\prec_\Gamma$-decreasing and from $\Gamma$, $A \in
(\fin\otimes\fin)^{++}$ and $A \subseteq \pi(y^0) \cap \pi(y^1)$, contradicting
that $\pi[T]$ is $\fin\otimes\fin$-almost disjoint.

\medskip

The following lemma says that we can approximate $U^{x_G}$ reasonably well in the ground model.
A very similar proof shows that $\M^{\I}_2$ is proper.
\begin{lemma}\label{l.force.finite.bit}
For each $\vec u_0 \in U$ the set $D(\vec u_0)$ is dense and open in
$\M^{\I}_2$, where we define $D(\vec u_0)$ to be the set of $p\in \M^{\I}_2$ such that for
all $p' \leq p$ and any $\vec u \in U$,
\[
\left[\, p' \forces\, \vec u \in U^{x_{\dot G}}_{[\vec u_0]}\, \right] \Rightarrow
(a(p'),A(p)/a(p')) \forces (\exists t \in T) \; (a(\vec u)),t) \in
U^{x_{\dot G}}_{[\vec u_0]}.
\]
\end{lemma}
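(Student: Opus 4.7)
\emph{Openness.} Suppose $p \in D(\vec u_0)$ and $q \leq p$. For any $p' \leq q$ we have $p' \leq p$, so $(a(p'), A(p)/a(p')) \forces (\exists t)\; (a(\vec u),t) \in U^{x_{\dot G}}_{[\vec u_0]}$ whenever $p' \forces \vec u \in U^{x_{\dot G}}_{[\vec u_0]}$. Since $A(q) \subseteq A(p)$ yields $(a(p'), A(q)/a(p')) \leq (a(p'), A(p)/a(p'))$, the stronger condition forces the same statement, showing $q \in D(\vec u_0)$.

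\emph{Density.} Given $p_0 \in \M^{\I}_2$, I plan to build $p \leq p_0$ in $D(\vec u_0)$ via a Mathias-style diagonal fusion, exploiting the $\sigma$-closure of $(\I^{++}, \subseteq^*_{\fin\otimes\fin})$ from Lemma~\ref{diagonalizing.A}. Enumerate as $\langle (s_n, a_n) \rangle_{n \in \omega}$ the countable collection of pairs of finite subsets of $\omega \times \omega$ with $a(p_0) \sqsubseteq_2 s_n \subseteq a(p_0) \cup A(p_0)$ and $a(\vec u_0) \subseteq a_n \subseteq s_n$; by Lemma~\ref{exploringTx}(\ref{a(u)-subset}) this contains every pair $(a(p'), a(\vec u))$ arising from $p' \leq p_0$ forcing $\vec u \in U^{x_{\dot G}}_{[\vec u_0]}$. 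Recursively construct a decreasing $(B_n)_{n \in \omega}$ in $\I^{++}$ with $B_0 = A(p_0)$ such that at step $n$, using the (essentially selective) pure-decision technique for $\M^{\I}_2$, the condition $(s_n, B_n/s_n)$ decides $\phi_n := (\exists t \in T)\,(a_n, t) \in U^{x_{\dot G}}_{[\vec u_0]}$: either some extension of $(s_n, B_{n-1}/s_n)$ forces $\phi_n$ (positive case --- absorb the witness into $B_n$ while maintaining $B_n \in \I^{++}$), or no extension does (negative case --- by the truth lemma $(s_n, B_{n-1}/s_n) \forces \neg\phi_n$ and we set $B_n = B_{n-1}$). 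Apply Lemma~\ref{diagonalizing.A} to obtain $A_\infty \in \I^{++}$ diagonalizing $(B_n)$, and set $p = (a(p_0), A_\infty)$.

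\emph{Verification.} Suppose $p' \leq p$ and $p' \forces \vec u \in U^{x_{\dot G}}_{[\vec u_0]}$. Letting $s = a(p')$ and $a = a(\vec u)$, Lemma~\ref{exploringTx}(\ref{a(u)-subset}) gives $a \subseteq s$, so $(s, a) = (s_n, a_n)$ for some $n$. Since $p'$ itself witnesses $\phi_n$ (take $t = t(\vec u)$) and $p' \leq (s, A_\infty/s)$, a negative decision at step $n$ would imply --- via the literal inequality $(s, A_\infty/s) \leq (s_n, B_n/s_n)$ secured by the strengthening below --- that $p' \forces \neg\phi_n$, contradicting $p' \forces \phi_n$. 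Hence $(s_n, B_n/s_n) \forces \phi_n$, which transfers to $(s, A_\infty/s) = (a(p'), A(p)/a(p'))$ by the same inequality.

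\emph{Main obstacle.} The passage $(s_n, A_\infty/s_n) \leq (s_n, B_n/s_n)$ is the main technical hurdle, since Lemma~\ref{diagonalizing.A} only yields $A_\infty \subseteq^*_{\fin\otimes\fin} B_n$. I resolve this by strengthening the fusion so that at step $n$, beyond deciding $\phi_n$, one commits $A_\infty$ to satisfy $A_\infty(k) \subseteq B_n(k)$ for every $k > \max\dom(s_n)$ and $A_\infty(k) \cap (\max s_n(k), \infty) \subseteq B_n(k)$ for every $k \in \dom(s_n)$. These are finitely many column-wise constraints per step, preserved under subsequent shrinking $B_{m+1} \subseteq B_m$, so they can be maintained over the $\omega$-step fusion while still extracting the diagonal $A_\infty \in \I^{++}$. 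This standard diagonal fusion pattern for Mathias-type forcings also underlies the properness of $\M^{\I}_2$ remarked on after the lemma statement.
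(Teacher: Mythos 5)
Your treatment of openness is fine and is what the paper does. For density, the overall shape is right — enumerate candidate finite configurations, decide each one by shrinking the reservoir, and close off with a diagonal — but there is a genuine gap in the way you close off, and your proposed patch does not repair it.

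The problem is precisely the one you flag yourself: Lemma~\ref{diagonalizing.A} only gives $A_\infty \subseteq^*_{\fin\otimes\fin} B_n$, and you need the literal inequality $(s_n, A_\infty/s_n) \leq (s_n, B_n/s_n)$. Your fix asks to ``commit $A_\infty$'' to $A_\infty(k)\subseteq B_n(k)$ for every $k>\max\dom(s_n)$ and $A_\infty(k)\cap(\max s_n(k),\infty)\subseteq B_n(k)$ for $k\in\dom(s_n)$. Contrary to your parenthetical remark, these are \emph{not} ``finitely many column-wise constraints per step'': the first family ranges over infinitely many columns. Worse, since the enumeration $(s_n,a_n)_n$ must be onto all finite configurations, for each fixed column $k$ there are infinitely many $n$ with $\max\dom(s_n)<k$, so imposing all these commitments forces $A_\infty(k)\subseteq\bigcap_n B_n(k)$ on every column — which can trivially put $A_\infty$ into $\mathcal I$. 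And this is no accident: the reason Lemma~\ref{diagonalizing.A} only delivers $\subseteq^*_{\fin\otimes\fin}$ rather than $\subseteq$ is exactly the $(B_n,C_n)$ alternation used to keep the diagonal in $\mathcal I^{++}$; the piece $B_n\cap C_n$ that gets harvested at stage $n$ need not, and in general will not, be contained in $B_m$ for $m>n$. So you cannot simultaneously run the black-box diagonalization and ask for literal containment.

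What the paper does instead is \emph{not} invoke Lemma~\ref{diagonalizing.A} after the fact: it reproves the diagonalization with the pure-decision step interleaved. The final reservoir $A(q)$ is built directly as $\bigcup_n b^n_n$ in an $\omega\times\omega$-stage Mathias-style fusion, where at each stage $(n,k)$ one first thins the current reservoir $B^k_n$ to some $B^*$ so that all the relevant decisions \eqref{e.B^*} are made, and then extends the finite part $b^k_n$ to $b^{k+1}_n$ \emph{inside} $b^k_n\cup B^*$ (and continues inside $B^*/b^{k+1}_n$). Because the finite pieces are grown inside the already-thinned reservoirs, the final $A(q)$ is literally contained, above the appropriate finite part, in each $B^*$ — this is what replaces the $\subseteq^*$ you get from the black box. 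The $\mathcal I^{++}$-positivity is maintained \emph{during} this construction by the $(B_n,C_n)$ trick, not at the end. If you want to make your argument precise you would need to build $A_\infty$ this way rather than extract it at the end; otherwise the claimed inequality $(s_n,A_\infty/s_n)\leq(s_n,B_n/s_n)$ simply is not available.

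One smaller gap in the positive case: ``absorb the witness into $B_n$'' — if $(b,B)$ with $b\supsetneq s_n$ forces $\phi_n$, you do not automatically get $(s_n,B/s_n)\forces\phi_n$. To pass from a bigger finite part back to $s_n$ you need Lemma~\ref{l.force.intersection}, which you never cite; that lemma (applied with $a=s_n$, $a'=a_n$, noting $a_n\subseteq s_n$) does give what you need, but it should be invoked explicitly.
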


The proof follows the same strategy as Lemma~\ref{diagonalizing.A} (the diagonalization lemma) to build a set in $\mathcal I^{++}$.
While we build this set, we carefully anticipate each of its finite subsets $a$ to see if there is some $t\in T$ and some forcing condition $q \in \M^{\I}_2$
which forces $(a, t)$ to be in
$U^{x_{\dot G}}$. 
If so, we make sure that our final set is contained in $a\cup A(q)$. We succeed as there are only countably many finite $a \subseteq \omega\times\omega$ to consider.
Note though that due to the nature of the proof of Lemma~\ref{diagonalizing.A}, we have to consider each finite $a$ again and again, and the construction potentially takes $\omega\times\omega$ stages.
\begin{proof}
Fix $q_0 \in \M_2^{\I}$ and $\vec u_0 \in U$ such that $q \forces \vec u_0 \in U^{x_{\dot G}}$.
We construct $q \leq q_0$ such that $q \in D(\vec u_0)$.

As in the proof of Lemma~\ref{diagonalizing.A}, we construct sequences $B_0, B_1, \hdots,$ and $C_0, C_1, \hdots$ both of which are possibly finite, such that whenever defined
\begin{itemize}
\item $B_n  \in (\fin\otimes\fin)^{++}$;
\item $C_n \in \A\setminus\{C_i \mid i <n\}$;
\item $B_n\cap C_n \in (\fin\otimes\fin)^{+}$ while for $i<n$, $B_n\cap C_i \in \fin\otimes\fin$.
\end{itemize}
Suppose $B_i$ and $C_i$ have been defined for $i<n$ (this includes the case $n=0$).
In $\omega$-many steps we define a descending sequence of conditions
$(b^k_n, B^k_n)_k$  from $\M^{\I}_2$ and at the end let
\begin{equation}\label{e.def.B_n}
B_n =  \bigcup_{k\in\omega} b^k_n.
\end{equation}
If $n=0$, let $b^0_0 = a(q_0)$ and $B^0_0 = A(q_0)$.
Otherwise, let 
\[
b^0_n = \bigcup_{i,j<n} b^i_j 
\]
and
\[
B^0_n = \big(B^{n-1}_{n-1} \res \dom(b^0_n)\big) \cup B^{n-1}_{n-1} \setminus \bigcup \{ C_i \setdef i<n\}
\]
noting that $B^0_n \in (\fin\otimes\fin)^+$ since $B^{n-1}_{n-1} \in \mathcal I^{++}$ by induction hypothesis.
So $(b^0_n, B^0_n) \in \M^{\I}_2$ and $B^0_n \cap C_i \in \fin\otimes\fin$ for $i<n$.
 
 \medskip
 
Supposing we have already defined $(b^k_n, B^k_n)  \in \M^{\I}_2$ we thin out $B^k_n$ to $B^*\in \mathcal I^{++}$ in finitely many steps such that whenever $a \subseteq b^k_n$ and 
 \begin{equation}\label{e.u}
 ( \exists p' \leq (a(q_0), B^k_n)) (\exists \vec u\in U) \; a(\vec u) ) = a \wedge a(p')  \subseteq b^k_n \wedge p' \forces \vec u \in U^{x_{\dot G}}_{[\vec u_0]}
 \end{equation}
then for some $t' \in T$
  \begin{equation}\label{e.B^*}
(a(p'), B^*) \forces (a(\vec u), t') \in U^{x_{\dot G}}_{[\vec u_0]}.
 \end{equation}
Extend $b^k_n$ to the some finite (we mean finite also in the general case!) set $b^{k+1}_n\subseteq\omega\times\omega$
satisfying
\begin{equation}\label{e.B_n.grows}
b^k_n \sqsubset_2 b^{k+1}_n \subseteq b^k_n \cup B^*
\end{equation}
and let 
\[
B^{k+1}_n = B^*/ b^{k+1}_n.
\]
Assuming we have defined $b^k_n$ for each $k\in \omega$ and letting $B_n$ be defined by \eqref{e.def.B_n},  note that \eqref{e.B_n.grows} ensures that $B_n \in (\fin\otimes\fin)^{++}$.
Should it be the case that $B_n \in \I^{+}$ the construction terminates and we let
\[
q=(a(q_0),B_n).
\]
Otherwise, we may chose $C_n \in \A\setminus\{C_i \mid i< n\}$ such that $C_n \cap  B_n \in (\fin\otimes\fin)^+$ as in Lemma~\ref{diagonalizing.A}
and continue the construction.

\medskip

If the construction does not terminate at any stage $n<\omega$, let
\[
B_\infty= \bigcup_{n\in\omega} b^n_n.
\]
Note that $B_\infty = \bigcup_{k\in\omega} B_k$ and thus since $B_\infty \cap C_k\in(\fin\times\fin)^+$ for each $k\in\omega$,
it must be the case that $B_\infty \in \mathcal I^{++}$ (as in the proof of Lemma~\ref{diagonalizing.A}). 
So  we obtain a condition  in $\M^{\I}_2$ by letting
\[
q=(a(q_0),B_\infty).
\]

\medskip

To see that $q \in D(\vec{u}_0)$, let $p' \leq q$, $\vec{u} \in U$ such that  
$p' \forces \vec u \in U^{x_{\dot G}}_{[\vec u_0]}$ be given.
Let us first assume that the construction did not stop at any stage $n<\omega$ and that $B_\infty$ is defined.
We can find $n>0$ so that $a(p') \subseteq b^{n-1}_{n-1}$. 
Thus, at stage $k=n$ in the construction of $B_{n}$, \eqref{e.u} was satisfied for $a=\vec{u}$, and so \eqref{e.B^*} is also satisfied.
By construction $B_\infty\setminus b_{n-1}^{n-1} \subseteq B^{n}_{n}$. 
Thus any condition below $(a(p'), B_\infty) = (a(p'), A(q))$ is compatible with $(a(p'),B^{n}_{n})$, and so we may replace $B^*$ by $A(q)$ in \eqref{e.B^*},
obtaining
 \begin{equation*}
(\exists t \in T)\; (a(p'), A(q)) \forces \vec u \in U^{x_{\dot G}}_{[\vec u_0]}
 \end{equation*}
and showing that $q \in D(\vec{u})$.
 
 If the construction of $B_0, B_1, \hdots$ terminated with $B_n \in \mathcal I^{++}$, we may find $k$ such that $a(p') \subseteq b^{k-1}_n$ and argue similarly with $B_n$ in place of $B_\infty$.
\end{proof}

\medskip

The proof of the Branch Lemma will crucially depend on the following simple lemma.
It plays the same role as 
Lemma~\ref{l.1D.properties}(\ref{finitepartnegligible})
in that it allows us to change the finite part of a condition while maintaining that something is forced about $U^{x^{\dot G}}$.
\begin{lemma}\label{l.force.intersection}
For any $p\in \M^{\I}_2$, $\vec u \in U$ such that 
$p \forces \vec u \in U^{x_{\dot G}}$, any $a\subseteq a(p)$ and $a'
\subseteq a \cap a(\vec u)$, it holds that \[
(a,A(p)/a) \forces  (a',t(\vec u)) \in U^{x_{\dot G}}.
\]
\end{lemma}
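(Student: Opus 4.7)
The proof extends the finite-part negligibility principle of Lemma~\ref{l.1D.properties}(\ref{finitepartnegligible}) to the 2D setting. The crucial observation is that $(a, A(p)/a)$ and $p = (a(p), A(p))$, while in general incompatible, have ``supports'' differing only modulo $\fin\otimes\fin$: the symmetric difference $(a(p) \cup A(p)) \,\triangle\, (a \cup A(p)/a)$ equals $(a(p)\setminus a) \cup A(p)\restrict\!\bigl((\dom(a(p))\setminus\dom(a))\cap [0,\max \dom(a)]\bigr)$, which lies in $\fin\otimes\fin$ because $\dom(a(p))$ is finite. Since clauses (1) and (2) defining $U^x$ depend on $x$ only modulo $\fin\otimes\fin$, and clause (3) for the weakened pair $(a',t(\vec u))$ is automatic once $a' \subseteq a$ (which is forced into $x_{\dot G}$ by $(a, A(p)/a)$), witnesses should transfer from $p$-generics to $q'$-generics.

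I will prove this by a density argument. Fix $(b, B) \leq (a, A(p)/a)$ arbitrary and set
\[
p' \;=\; (a(p) \cup b,\, B'),\qquad B' := B \cup \bigl(A(p)\restrict(\dom(a(p))\setminus \dom(B))\bigr),
\]
padding $B$ with the columns of $A(p)$ indexed by elements of $\dom(a(p))$ missing from $\dom(B)$. A routine verification shows $p'$ is a condition in $\M^{\I}_2$ with $p' \leq p$: the padded set lies in $\fin\otimes\fin \subseteq \mathcal I$, so $B' \in \mathcal I^{++}$; and the $\sqsubseteq_2$-condition on stems together with the initial-segment conditions on domains can be checked using $a \sqsubseteq_2 b$ and $\dom(a(p))\sqsubseteq \dom(A(p))$.

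Since $p' \leq p \forces \vec u \in U^{x_{\dot G}}$, there is a name $\dot w$ with $p'$ forcing that $\dot w \in [T_{[t(\vec u)]}]$ witnesses $\vec u \in U^{x_{\dot G}}$. Transferring $\dot w$ to a name $\dot w^*$ below $(b,B)$ via the natural map
\[
(c,C) \;\longmapsto\; \bigl(a(p)\cup c,\; C \cup A(p)\restrict(\dom(a(p))\setminus \dom(C))\bigr),
\]
the three defining clauses of $U^{x_{\dot G}}$ transfer for $(a', t(\vec u))$: clause (1), $\pi(\dot w^*) \cap x_{\dot G} \notin \fin\otimes\fin$, survives the $\fin\otimes\fin$-change of $x_{\dot G}$; clause (2), $\dom(a') \subseteq \dom_\infty(\pi(\dot w^*)\cap x_{\dot G})$, follows from $\dom(a') \subseteq \dom(a(\vec u))$ and preservation of $\dom_\infty$ modulo $\fin\otimes\fin$; and clause (3), $a'(k) \subseteq \pi(\dot w^*)(k) \cap x_{\dot G}(k)$ for $k \in \dom(a')$, decomposes as $a'(k) \subseteq a(\vec u)(k) \subseteq \pi(\dot w^*)(k)$ combined with $a'(k) \subseteq a(k) \subseteq x_{\dot G}(k)$, the last inclusion being forced by $(b,B)\leq (a, A(p)/a)$.

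The main obstacle is verifying that the cone map is sufficiently well-behaved: one needs that the resulting pairs are indeed conditions in $\M^{\I}_2$ (the $\sqsubseteq_2$-conditions on stems and the initial-segment conditions on domains being the delicate points), and that the image is cofinal enough in $\M^{\I}_2(\leq p')$ to genuinely pull the name $\dot w$ back to a name $\dot w^*$ below $(b,B)$. This demands some care in the case that $\dom(a)$ fails to be an initial segment of $\dom(a(p))$, in which case $\dom(A(p)/a) \subsetneq \dom(A(p))$ and the padding step is genuinely needed to keep $p'$ a valid condition.
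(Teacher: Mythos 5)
You have correctly identified the two driving ideas — that $(a,A(p)/a)$ and $p$ have ``supports'' that can be related modulo $\fin\otimes\fin$, and that the defining clauses of $U^x$ are $\fin\otimes\fin$-invariant — but the execution has a genuine gap, and you acknowledge as much in your final paragraph. The proposed ``cone map'' $(c,C)\mapsto(a(p)\cup c,\ C\cup A(p)\restrict(\dom(a(p))\setminus\dom(C)))$ is not order-preserving: if $(c,C)\leq(b,B)$ loses a column $k\in\dom(B)\cap\dom(a(p))$ with $k>\max(\dom(a))$ (which the definition of $A(p)/a$ does permit), then the padding on column $k$ jumps from $B(k)$ to the strictly larger $A(p)(k)$, so the image of the stronger condition has a larger pure part and hence is not below the image of the weaker one. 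Without a genuine order-isomorphism, dense embedding, or projection there is no way to ``transfer $\dot w$ to a name $\dot w^*$ below $(b,B)$'' — this step simply is not defined. There is also a second, independent omission: even if one succeeds in obtaining a generic $G^*\supseteq G$ for the larger forcing with $p\in G^*$ and a witness $w\in V[G^*]$, one must still argue that a witness exists in the smaller model $V[G]$ itself, and this requires an absoluteness (tree/well-foundedness) argument that your proof never touches.

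The paper's proof replaces your ad hoc condition $p'$ with a bona fide product decomposition: it exhibits an isomorphism $\phi$ between $\M_2^{\I}(\leq(a,A(p)/a))\times\prod_{j\in I}\M(\leq(a(p)(j),A(p)(j)))$, where $I=\dom(a(p))\setminus\dom(a)$, and $\M_2^{\I}(\leq p)$. Given a generic $G$ containing $(a,A(p)/a)$, one adjoins an auxiliary generic $H$ for the finite product of one-dimensional Mathias forcings and observes that $\phi(G\times H)$ is generic with $p\in\phi(G\times H)$ and $x_G\,\Delta\,x_{\phi(G\times H)}\in\fin\otimes\fin$. This yields a witness in $V[G][H]$, which is then pulled down to $V[G]$ by coding the existential statement as ill-foundedness of a tree $S\in V[G]$ and appealing to absoluteness. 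Your proposal could likely be repaired by replacing the cone map with this product isomorphism and adding the tree/absoluteness argument, at which point it would essentially become the paper's proof; as written it does not establish the conclusion.
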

\begin{proof}
Let $G$ be a generic over $V$ with $(a,A(p)/a) \in G$, and let 
\[
I = \dom(a(p)) \setminus \dom(a).
\] 
Suppose $H$ is $\prod_{j\in I} \M$-generic over $V[G]$ such that $(a(p)(j),
A(p)(j))_{j\in I} \in H$. 
Then $G\times H$ is generic over $V$ for 
\[
\M_2^{\I}\times \prod_{j\in I}\M .
\]
We define a bijection
\begin{align*}
\phi\colon \M_2^{\I}\Big(\leq \big(a',A(p)/a'\big)\Big)\times \prod_{j\in I}\M \Big(\leq \big(a(p)(j), A(p)(j)\big)\Big)
   \to \M_2^{\I}(\leq p)
\end{align*}
by
$((b,B),(c_j,C_j)_{j\in I}) \mapsto (a^*, A^*)$ where
\begin{align*}
a^*(k) &=
\begin{cases}
a(p)(k) \cup \big(b(k)\setminus a(k)\big) & \text{ if } k\in \dom(a);\\
c_k & \text{ if } k\in I; \\
b(k) & \text{ otherwise}.
\end{cases}
\intertext{and}
A^*(k) &=
\begin{cases}
C_k & \text{ if } k\in I; \\
B(k) & \text{ otherwise}.
\end{cases}
\end{align*}

\medskip

Note that $p\in \phi(G\times H)$, so $\vec u\in U^{x_{\phi(G\times H)}}$ in $V[G][H]$.
By definition of $U^{x}$ %
this means that in $V[G]$ we can find $w \in [T_{[t(\vec u)]}]$
so that 
\begin{equation}\label{e.existsw}
(\exists u \in (\fin\otimes\fin)^{++})\;a \subseteq u \subseteq \pi(w) \cap x_{\phi(G\times H)}.
\end{equation}
Since $a'' \subseteq x_{G}$  and since $x_G\Delta x_{\phi(G\times H)} \in \fin\otimes\fin$ we may replace $a$ by $a'$ and then $x_{\phi(G\times H)}$ by $x_{G}$ in \eqref{e.existsw}, and thus  
\begin{equation}\label{e.x-v}
(\exists x \in \pi[T_{[t(\vec u)]}])(\exists u \in (\fin\otimes\fin)^{++})\; a'' \subseteq  u \subseteq \pi(x) \cap x_{G}.
\end{equation}
It is easy to find a tree $S \in V[G]$ such that $[S]$ consists of the (codes for) pairs $(x, u)$ witnessing the two existential quantifiers in \eqref{e.x-v}. 
 Since being
well-founded is absolute between models of $\ZFC$, we conclude \eqref{e.x-v} holds in $V[G]$. 
But \eqref{e.x-v} implies (in fact, is equivalent to) $ (a',t(\vec u)) \in U^{x_{G}}$, so since $G$ was arbitrary,  we have shown that $(a',A/a')\forces (a'',t(\vec u)) \in U^{x_{\dot G}}$.
\end{proof}

With this notation and the lemmas at our disposal, we are ready to prove 
$$
\forces_{\M^{\I}_2} \lvert \pi[T^{x_{\dot G}}] \rvert \leq 1,
$$
i.e., 
the Branch
Lemma~\ref{l.2D.branch}.

\begin{proof}[Proof of the Branch Lemma~\ref{l.2D.branch}]
Assume towards a contradiction that the lemma is false,
whence we may find $p \in \M^{\I}_2$ and a pair of $\M^{\I}_2$-names $\dot w^0$ and $\dot
w^1$ so that
\[
p \forces (\forall i \in \{0,1\}) \; \dot w^i \in [T^{x_{\dot G}}] \wedge  {x_{\dot G}\cap\pi(\dot w^i)}
\notin \fin\otimes\fin
\]
and $p \forces \pi(\dot w^0) \neq \pi(\dot w^1)$.
Then clearly we may also find $(p_0, \vec u^0_0, \vec u^1_0) \in \Gamma$ such that
$\pi(t(\vec u^0_0)) \neq \pi(t(\vec u^1_0))$ ($a(\vec u^i_0)$ plays no
role here).

\begin{claim}\label{c.alternative}
One of the following holds:
\begin{enumerate}
\item\label{alt.horizontal} There is $n^* \in \omega$ and $(p_1, \vec u^0_1, \vec u^1_1)
\leq_\Gamma (p_0, \vec u^0_0, \vec u^1_0)$ in $\Gamma$ such that for any $(p_2, \vec
u^0_2, \vec u^1_2)\leq_\Gamma (p_1, \vec u^0_1, \vec u^1_1)$ from $\Gamma$,
$\dom(a(\vec u^0_2)) \cap \dom(a(\vec u^1_2)) \subseteq n^*$; or
\item\label{alt.vertical} There is $(p_1, \vec u^0_1, \vec u^1_1)  \leq_\Gamma (p_0, \vec
u^0_0, \vec u^1_0)$, $l^* \in \omega$ and $k^* \in \dom(a(\vec u^0_1)) \cap \dom(a(\vec u^1_1))$
such that for any $(p_2, \vec u^0_2, \vec u^1_2)\leq_\Gamma (p_1, \vec u^0_1, \vec u^1_1)$
from $\Gamma$,
\[
a(\vec u^0_2)(k^*)\cap  a(\vec u^1_2)(k^*) \subseteq l^*.
\]
\end{enumerate}
\end{claim}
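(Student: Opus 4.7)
I plan to prove Claim~\ref{c.alternative} by contradiction, using the well-foundedness of $(\Gamma, \prec_\Gamma)$ established just before the claim. Suppose neither alternative~(\ref{alt.horizontal}) nor~(\ref{alt.vertical}) holds. Negating these gives two extension principles available at every $(p_1, \vec u^0_1, \vec u^1_1) \leq_\Gamma (p_0, \vec u^0_0, \vec u^1_0)$ in $\Gamma$: \emph{horizontal growth}, that for every $n^* \in \omega$ some further $\leq_\Gamma$-extension in $\Gamma$ has $\dom(a(\vec u^0)) \cap \dom(a(\vec u^1)) \not\subseteq n^*$; and \emph{vertical growth}, that for every $k^*$ in the current $\dom(a(\vec u^0)) \cap \dom(a(\vec u^1))$ and every $l^* \in \omega$ some further $\leq_\Gamma$-extension in $\Gamma$ has $a(\vec u^0)(k^*) \cap a(\vec u^1)(k^*) \not\subseteq l^*$. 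From these I will recursively build an infinite $\prec_\Gamma$-descending chain $(p_n, \vec u^0_n, \vec u^1_n)_{n \in \omega}$ in $\Gamma$, contradicting well-foundedness.

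The recursive step is as follows. Given $(p_n, \vec u^0_n, \vec u^1_n)$, let $D := \dom(a(\vec u^0_n)) \cap \dom(a(\vec u^1_n))$, which is finite. First invoke horizontal growth with $n^* = \max(D)+1$ (or $n^* = 0$ if $D = \emptyset$) to pick up a new element $k^\dagger > \max(D)$ in the intersection of domains. Then enumerate $D \cup \{k^\dagger\} = \{k_0, \ldots, k_s\}$ and for each $i$ in turn invoke vertical growth at $k_i$, choosing $l^*$ just above $\max\bigl(a(\vec u^0_n)(k_i) \cap a(\vec u^1_n)(k_i)\bigr)$ when $k_i \in D$ and $l^* = 1$ when $k_i = k^\dagger$. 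Let $(p_{n+1}, \vec u^0_{n+1}, \vec u^1_{n+1})$ denote the final condition thus produced; by chaining the individual $\leq_\Gamma$-steps it lies in $\Gamma$ and is $\leq_\Gamma$-below $(p_n, \vec u^0_n, \vec u^1_n)$.

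For $\prec_\Gamma$-strictness I use the elementary observation that $a \sqsubseteq_2 a'$ and $b \sqsubseteq_2 b'$ together imply $a \cap b \sqsubseteq_2 a' \cap b'$, so the intersection $a(\vec u^0) \cap a(\vec u^1)$ extends $\sqsubseteq_2$-ly along any $\leq_\Gamma$-chain. By construction $k^\dagger$ enters $\dom\bigl(a(\vec u^0_{n+1}) \cap a(\vec u^1_{n+1})\bigr)$ strictly above $\max \dom\bigl(a(\vec u^0_n) \cap a(\vec u^1_n)\bigr)$, and the choice of $l^*$ at each $k \in D$ forces the old column intersection at $k$ to be strictly extended in the $\sqsubsetneq$ sense. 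Crucially, later vertical-growth applications cannot spoil earlier gains, because $\leq_\Gamma$-extensions only extend individual domains and individual columns of the $a(\vec u^i)$'s as initial segments, so set-inclusion of old column intersections into new ones is preserved.

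The main obstacle is exactly this bookkeeping---verifying that the iterated vertical-growth steps at $k_0, \ldots, k_s$ compound constructively rather than undoing one another---and it is handled by the monotonicity of $\leq_\Gamma$ just explained. In the general setting $\mathcal J = \bigoplus_{\fin(\phi)}\fin(\phi_k)$ the same construction works, provided horizontal and vertical growth are each iterated a finite number of times so that $\phi(\dom(\cdot))$ and $\phi_{k_i}$ of the column intersection strictly increase; this is possible because conditions in $\M^{\I}_2$ carry $A(p) \in \I^{++}$ with unbounded $\phi$- and columnwise $\phi_k$-submeasure.
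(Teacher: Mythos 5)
Your proof follows the paper's strategy exactly: negate both alternatives, obtain the ``horizontal'' and ``vertical'' extension principles, and iterate these to build an infinite $\prec_\Gamma$-descending chain, contradicting well-foundedness. Where the paper applies the vertical principle once per column of the current intersection and then the horizontal one once for the domain, you reverse the order and also apply vertical growth at the new column $k^\dagger$ produced by horizontal growth; this is a minor reorganization, and in fact it makes explicit that $k^\dagger$ actually lands in $\dom\bigl(a(\vec u^0_{n+1}) \cap a(\vec u^1_{n+1})\bigr)$, a point the paper's one-line description glosses over.

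However, the monotonicity observation you invoke---that $a \sqsubseteq_2 a'$ and $b \sqsubseteq_2 b'$ together imply $a \cap b \sqsubseteq_2 a' \cap b'$---is \emph{false}. Take $a = \{(0,0),(1,0)\}$, $a' = \{(0,0),(0,5),(1,0)\}$, $b = \{(0,3),(1,0)\}$, $b' = \{(0,3),(0,5),(1,0)\}$; then $a \sqsubseteq_2 a'$ and $b \sqsubseteq_2 b'$, but $\dom(a \cap b) = \{1\}$ is not an initial segment of $\dom(a' \cap b') = \{0,1\}$. The columnwise part of your observation is correct (for $n \in \dom(a\cap b)$, one does have $(a\cap b)(n) \sqsubseteq (a'\cap b')(n)$, since everything below $\max(a\cap b)(n)$ is captured by the initial-segment property of each column), but the domain clause can fail: a $\leq_\Gamma$-step can introduce a ``new'' column intersection at an index $n$ with $n \in \dom(a(\vec u^0_1))\cap\dom(a(\vec u^1_1))$ but $n \notin \dom(a(\vec u^0_1)\cap a(\vec u^1_1))$ and $n < \max\dom(a(\vec u^0_1)\cap a(\vec u^1_1))$, spoiling the $\sqsubsetneq$ requirement on domains that $\prec_\Gamma$ needs. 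To be fair, the paper's own proof is equally terse on this point. The issue is repairable: for instance, one can maintain the invariant $a(\vec u^0_n)=a(\vec u^1_n)$ throughout the chain (using that $U^{x_{\dot G}}$ is closed under shrinking the finite part, and that $a(\vec u^i_0)$ ``plays no role'' so may be taken empty initially); then $\dom$ of the intersection coincides with $\dom(a(\vec u^0_n))$, new domain elements only appear above the old maximum, and the $\sqsubsetneq$ growth of domains is automatic. But as written, your justification of $\prec_\Gamma$-strictness leans on the false observation and is incomplete at exactly this step.
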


\begin{proof}[Proof of Claim]
Suppose that both Items~\ref{alt.horizontal} and~\ref{alt.vertical} above
fail; we show that there is a $\prec_\Gamma$-descending sequence in $\Gamma$, which contradicts the
wellfoundedness of $(\Gamma, \prec_{\Gamma})$. 

It suffices to
show that any $(p, \vec u^0, \vec u^1) \leq_\Gamma (p_0, \vec u^0_0, \vec u^1_0)$
has a $\prec_\Gamma$-extension.
That Items~\ref{alt.horizontal} and~\ref{alt.vertical} above
fail means precisely that
\begin{enumerate}[label=(\arabic*')]
\item\label{alt.horizontal.fail} For each $n \in \omega$ and $(p_1, \vec u^0_1, \vec u^1_1)
\leq_\Gamma (p_0, \vec u^0_0, \vec u^1_0)$ in $\Gamma$ there is $n^* > n$ and $(p_2, \vec
u^0_2, \vec u^1_2)\leq_\Gamma (p_1, \vec u^0_1, \vec u^1_1)$ from $\Gamma$ such that
$n \in \dom(a(\vec u^0_2)) \cap \dom(a(\vec u^1_2))$; and
\item\label{alt.vertical.fail} For each $(p_1, \vec u^0_1, \vec u^1_1)  \leq_\Gamma (p_0, \vec
u^0_0, \vec u^1_0)$,  $k^* \in \dom(a(\vec u^0_1)) \cap \dom(a(\vec u^1_1))$ and $l \in \omega$ there is $l^* > l$ and
$(p_2, \vec u^0_2, \vec u^1_2)\leq_\Gamma (p_1, \vec u^0_1, \vec u^1_1)$
from $\Gamma$ such that 
\[
l \in a(\vec u^0_2)(k^*)\cap  a(\vec u^1_2)(k^*).
\]
\end{enumerate}
This means that in finitely many steps, we can extend any $(p, \vec u^0, \vec u^1)\leq_\Gamma (p_0, \vec u^0_0, \vec u^1_0)$ to some $(q, \vec v^0, \vec v^1) \leq_\Gamma (p, \vec u^0, \vec u^1)$ so that 
\[
a(\vec u^0) \cap a(\vec u^1) \sqsubset_2 a(\vec v^0) \cap a(\vec v^1)
\] 
by applying \ref{alt.vertical.fail} once for each vertical in $a(\vec u^0) \cap a(\vec u^1)$ and \ref{alt.horizontal.fail} once for the domain. 
Thus 
$(q, \vec v^0, \vec v^1) \prec_\Gamma (p, \vec u^0, \vec u^1)$.
\end{proof}

\medskip

Finally, having established that one of Items~\ref{alt.horizontal} and~\ref{alt.vertical}
above must hold, we use Lemmas~\ref{l.force.intersection} and \ref{l.force.finite.bit} to
finish the proof of Lemma~\ref{l.2D.branch} by case distinction.

\medskip

\textbf{Case 1:} If Item~\ref{alt.vertical} holds, we may fix  $(p_1, \vec u^0_1, \vec
u^1_1)\in\Gamma$, $l^* \in \omega$ and $k^* \in \dom(a(\vec u^0_1)) \cap \dom(a(\vec u^1_1))$ such that
for any $(p_2, \vec u^0_2, \vec u^1_2)\leq_\Gamma (p_1, \vec u^0_1, \vec u^1_1)$ from
$\Gamma$,
\[
a(\vec u^0_2)(k^*)\cap  a(\vec u^1_2)(k^*) \subseteq l^*.
\]
We may also assume that $p_1 \in D(\vec u^0_1) \cap D(\vec u^1_1)$ (see
Lemma~\ref{l.force.finite.bit}).  We now reach a contradiction:
Define $A \subseteq \omega\times\omega$ by letting $A(k) = A(p_1)(k)$ for each $k\neq k^*$, and letting
\begin{equation*}
A(k^*)=\{ l\in\omega \mid (\exists p \leq p_1) (\exists \vec u \leq_U \vec u^0_1) \; l \in
a(\vec u)(k^*) \wedge p \forces \vec u \in U^{x_{\dot G}}\}.
\end{equation*}
Lemma \ref{exploringTx} ensures that $A \in \I^{++}$.
Let
\[
p^* = (a(p_1), A).
\]
Since $p^* \forces \vec u^1_1 \in U^{x_{\dot G}}$,
we can find $p \leq p^*$, $l \in \omega \setminus l^*$ and $\vec u$ such that
\[
l \in a(\vec u)(k^*) \wedge p \forces  \vec u \in U^{x_{\dot G}}_{[\vec u^1_1]}.
\]
It follows that $l\in A(k^*)$ and so by definition of $A(k^*)$ we can find $p' \leq p_1$
and $\vec u'$ such that
\[
l \in a(\vec u')(k^*) \wedge p' \forces  \vec u' \in U^{x_{\dot G}}_{[\vec u^0_1]}.
\]
Then, as $p,p' \leq p_1$ and $p_1 \in D(\vec u^0_1) \cap D(\vec u^1_1)$,
we can find $\vec u^0$ and $\vec u^1$ such that
\[
l \in a(\vec u^0)(k^*) \wedge (a(p),A(p_1)/a(p)) \forces  \vec u^0 \in U^{x_{\dot G}}_{[\vec u^0_1]}
\]
and
\[
l \in a(\vec u^1)(k^*) \wedge (a(p'),A(p_1)/a(p')) \forces  \vec u^1 \in U^{x_{\dot
G}}_{[\vec u^1_1]}.
\]
Note that $\{(k^*,l)\}\cup a(\vec u_1^i)\subseteq
a(p_1) \initseg a(p) \cap a(p')$ for each $i\in\{0,1\}$. 
Let $a=a(p) \cap a(p')$, let $p_2=(a, A(p_1)/a)$ and let $a^i = a(\vec u_1^i) \cup
\{(k^*,l)\}$ for each $i\in\{0,1\}$. 
By Lemma~\ref{l.force.intersection} we
conclude
\begin{align*}
p_2 \forces (a^i, t(\vec u^i)) \in U^{x_{\dot G}}_{[\vec u_1^i]},
\end{align*}
which contradicts the choice of $(p_1, \vec u_1^0, \vec u_1^1)$ and $l^*$.

\medskip

\textbf{Case 2:} Otherwise, Item~\ref{alt.horizontal} holds and we may fix  $n^* \in \omega$
and $(p_1, \vec u^0_1, \vec u^1_1)  \in\Gamma$ such that for any $(p_2, \vec u^0_2, \vec
u^1_2)\leq_\Gamma (p_1, \vec u^0_1, \vec u^1_1)$ from $\Gamma$,
\begin{align*}
\dom(a(\vec u^0_2)) \cap \dom(a(\vec u^1_2)) \subseteq n^*.
\end{align*}

We now argue entirely analogously to the previous case, but in the domain instead of in
one of the verticals. To this end, set
\[
A'=\{( k,l)\mid (\exists p \leq p_1)(\exists \vec u\leq_U \vec u^0_1)\; ( k, l) \in a(\vec u') \land
p \forces \vec u \in U^{x_{\dot G}}\}.
\]
Note that $A' \subseteq A(p_1)$ and
$A \in \I^{+}$ by Lemma \ref{exploringTx} (\ref{infiniteDomain}). Let $A \subseteq A'$ be
the largest subset satisfying $A \in \I^{++}$.
Letting $p^* = (a(p_1), A)$ we reach a contradiction almost exactly as in the previous case; details are left to the reader.
\renewcommand{\qedsymbol}{{\tiny Lemma \ref{l.2D.branch}.} $\Box$}
\end{proof}


\medskip

\section{Iterated Fubini products}\label{s.alphaD}

In this section we will look at iterated Fubini products of $\fin(\phi)$-ideals.
In order to study these, we will first recursively define sets $M^{\alpha}$:

\begin{definition} \label{domain.fin.alpha}
Set $M^{1} = \omega$.
For a successor ordinal, set $M^{\alpha + 1} = \omega \times M^{\alpha}$. For
$\alpha$ limit ordinal, fix once and for all a sequence $(\alpha_n)_{n\in\omega}
\subseteq \alpha$ which is cofinal in $\alpha$, and set
$M^{\alpha} = \bigcup_{n\in\omega} \{n\} \times M^{\alpha_n}$.
\end{definition}


We will fix some notation concerning the sets $M^{\alpha}$:
\begin{notation}
Let $X \subseteq M^{\alpha}$.
We set 
\[
X(n) = \{ x \in \bigcup_{\beta < \alpha} M_{\beta} \mid (n,x) \in X\}.
\]
For
$\alpha > 1$, we
let as usual 
$
\dom(X) = \{n\in\omega \mid X(n) \neq \emptyset\}.
$
If $(n_0, \dots, n_k) \in \omega^{k+1}$ satisfies that $n_0 \in \dom(X)$
and for every $1 \leq i
< k$ we have $n_i \in \dom(X(n_0)\cdots(n_{i-1}))$ and $n_{k} \in X(n_0)\cdots(n_k)
\subseteq \omega$, we say that $(n_0, \dots, n_{k})$ is a \emph{terminal sequence}.
Any proper initial segment of a terminal sequence in X is called 
a \emph{domain sequence} in $X$. 
 Note that we allow a
domain sequence to be empty, and set $X(\emptyset) = X$. We will often refer to domain
sequences and terminal sequences as vectors, $\vec n = (n_0, \dots, n_k)$, and we will write
$X(\vec n)$ for $X(n_0)\cdots(n_k)$, $\vec n(i)$ for $n_i$, $\vec n\res l$ for $(n_0, \hdots, n_{l-1})$ when $1\leq l\leq k+1$ and $\lh(\vec n)$ for $k+1$, of course setting $\vec n\res 0 = \emptyset$ and $\lh(\emptyset)=0$.
We also set 
\[
\dom_{\alpha}(X) = \{\vec n\res l \mid l < \lh(\vec n)\land\vec n\in  X\}
\] 
i.e., the set of domain sequences in $X$.
We denote by
$\delta_{\alpha}(\vec n)$ the ordinal $\delta \leq \alpha$ such that $X(\vec n) \subseteq
M^{\delta}$. If the origin of the domain sequence $\vec n$ is unambiguous, we will often just
write $\delta(\vec n)$.
\end{notation}

We now define a hierarchy of ideals which complexity-wise lies cofinal in the Borel hierarchy:
\begin{definition}
We define an ideal $\fin^\alpha$ on $M^\alpha$ for $\alpha \in \omega_1\setminus\{0\}$ by recursion as follows:
\begin{itemize}
\item $\fin^1=\fin$.
\item For a successor ordinal $\alpha + 1>1$, set
\begin{align*}
A \in \fin^{\alpha + 1} \Leftrightarrow \{n\in\omega
\mid A(n) \notin \fin^{\alpha}\}\in \fin.
\end{align*}
\item For a limit ordinal $\alpha$ with cofinal sequence $(\alpha_n)_{n\in\omega})$, set
\begin{align*}
A \in \fin^{\alpha} \Leftrightarrow \{n\in\omega \mid
A(n) \notin \fin^{\alpha_n} \}
\in\fin.
\end{align*}
\end{itemize}
\end{definition}

Generalizing the previous definition, we also define iterated Fubini products of a sequence of $F_\sigma$ ideals on $\omega$ (given as the finite part of a submeasure):
\begin{definition}  
We define an ideal
$\fin^{\alpha}(\vec \phi)$ on $M^\alpha$, where $\vec \phi = (\phi_{\beta})_{0<\beta \leq\alpha}$ is a sequence of lsc submeasures on $\omega$ and $\alpha\in\omega_1\setminus\{0\}$. 
The definition is again by recursion on $\alpha$:
\begin{itemize}
\item For $\alpha =1$ and $A \subseteq M^1$ set
\begin{align*}
A \in \fin(\vec \phi) &\Leftrightarrow A \in \fin(\phi_{1}).
\intertext{\item For a successor ordinal $\alpha>1$ and $A \subseteq M^\alpha$ set}
A \in \fin(\vec \phi) &\Leftrightarrow \{n\in\omega
\mid A(n) \notin \fin(\vec \phi\res \alpha)\} \in \fin(\phi_{\alpha}).
\intertext{\item For a limit ordinal $\alpha$  with cofinal sequence $(\alpha_n)_{n\in\omega})$ and $A \subseteq M^\alpha$ set}
A \in \fin(\vec \phi) &\Leftrightarrow \{n\in\omega \mid
A(n) \notin \fin(\phi\res\alpha_n+1) \}
\in\fin(\phi_\alpha).
\end{align*}
\end{itemize}
Clearly $\fin^\alpha=\fin(\vec \phi)$ where for each $\beta$, $\phi_\beta$ is just the counting measure. 
\end{definition}

(One could think of defining yet more general ideals of the form $\fin(\vec \phi)$ on $M^\alpha$ where $\vec \phi=(\phi_s)_{s\in  D(\alpha)}$ is 
an assignment of submeasures to the set $D(\alpha)$ of domain sequences in $M^\alpha$, letting $D(1)=\{\emptyset\}$.
Write 
\[
\vec \phi(n)=(\phi_{n\conc t})_{t \in D(\alpha_n)}, 
\]
where if $\alpha$ is a limit ordinal, $(\alpha_n)_{n\in\omega}$ is its cofinal sequence and if $\alpha$ is a successor, we let $\alpha_n=\alpha-1$. 
We can define $\fin(\vec \phi)$ by recursion on $\alpha$ as follows:
For $\alpha = 1$, let 
$X \in \fin(\vec \phi) \iff \phi_\emptyset(X)<\infty$;
for $\alpha>1$, let 
$X \in \fin(\vec \phi) \iff \phi_\emptyset(\{n\in\omega\mid X(n)\in \fin(\vec \phi(n))\})<\infty$.
We conjecture all our proofs go through.)

\medskip
Since we can view any element in $M^{\alpha}$ as a finite sequence in $\omega$, the set
$M^{\alpha}$ can be identified with a subset of $\omega^{<\omega}$---to be precise, with the set of terminal sequences in $M^{\alpha}$. 
Note that a set $a
\subseteq M^{\alpha}$ is finite if and only if there are finite sets $K_0,\dots\,K_{n-1}$
with $K_i \subseteq \omega$ such that $a \subseteq K_0 \times K_1 \times
\cdots \times K_{n-1}$ under this identification. Furthermore, there is a natural ordering on
$M^{\alpha}$, namely the lexicographical ordering, $\leq_{lex}$, inherited from $\omega^{<\omega}$.
We will also consider several other orderings on $M^{\alpha}$:

\begin{definition}
We recursively define $\sqsubseteq_{\alpha}$ on $M^{\alpha}$ as follows:
\begin{itemize}
\item Set $X \initseg_1 Y$ if and only if $X \initseg Y$, i.e.
if $X$ is an initial segment of $Y$.
\item Set $X \initseg_{\alpha + 1} Y$
if and only if $\dom(X) \initseg \dom(Y)$ and
for every $i\in\dom(Y)$ we have $X(i) \initseg_{\alpha} Y(i)$;
\item For $\alpha$ a limit ordinal with cofinal sequence $(\alpha_n)_{n\in\omega}$, we set
$X \sqsubseteq_{\alpha} Y$ if and only if $\dom(X) \sqsubseteq \dom(Y)$ and for every
$i\in\dom(Y)$ we have $X(i) \sqsubseteq_{\alpha_i} Y(i)$.
\end{itemize}

In order to determine if a set properly extends another set, we need a strict ordering
$\sqsubset_{\alpha}$ on $M^{\alpha}$ to be a version
of $\initseg_{\alpha}$ which is strict at every level.
For the case $\mathcal J = \fin^\alpha$ we make the following definition:
\begin{itemize}
\item Set $X \sqsubset_1 Y$ if and only if $X \sqsubsetneq Y$, i.e.
if $X$ is a proper initial segment of $Y$.
\item Set $X \sqsubset_{\alpha + 1} Y$
if and only if $\dom(X) \sqsubsetneq \dom(Y)$ and
for every $i\in\dom(X)$ we have $X(i) \sqsubset_{\alpha} Y(i)$;
\item For $\alpha$ a limit ordinal with cofinal sequence $(\alpha_n)_{n\in\omega}$, we set
$X \sqsubset_{\alpha} Y$ if and only if $\dom(X) \sqsubsetneq \dom(Y)$ and for every
$i\in\dom(Y)$ we have $X(i) \sqsubset_{\alpha_i} Y(i)$.
\end{itemize}

In the general case of an ideal $\mathcal J=\fin(\vec \phi)$ on $M^\alpha$,
we define $\sqsubset_{\alpha}$ on $M^{\alpha}$ by recursion on $\alpha$ as follows:
\begin{itemize}
\item Set $X \sqsubset_1 Y$ if and only if $X\sqsubseteq Y$ and $\phi_1(X) < \phi_1(Y)$.
\item Set $X \sqsubset_{\alpha + 1} Y$
if and only if $\dom(X) \sqsubseteq \dom(Y)$, 
\[
\phi_{\alpha+1}(\dom(X)) < \phi_{\alpha+1}(\dom(Y)),
\] 
and
for every $i\in\dom(X)$ we have $X(i) \sqsubset_{\alpha} Y(i)$;
\item For $\alpha$ a limit ordinal with cofinal sequence $(\alpha_n)_{n\in\omega}$, we set
$X \sqsubset_{\alpha} Y$ if and only if $\dom(X) \sqsubseteq \dom(Y)$,
$\phi_\alpha(\dom(X)) < \phi_\alpha(\dom(Y))$,
 and for every
$i\in\dom(Y)$ we have $X(i) \sqsubset_{\alpha_i} Y(i)$.
\end{itemize}
As was the case for the previous section, the material of the present section generalizes almost mechanically from $\fin$ to $\fin^\alpha$.
Often this is made possible by of the above definition of $\sqsubset_\alpha$.

\medskip

When defining the $\alpha$-dimensional Mathias forcing notion, we will need an ordering
$<_{\alpha}$ on $M^{\alpha}$ defined as follows:
\begin{itemize}
\item Set $X <_1 Y$ if and only if $\max(X) < \min(Y)$.
\item Set $X <_{\alpha + 1} Y$
if and only if $\dom(X) \sqsubsetneq \dom(Y)$, and for every $i\in\dom(X)$ we have $X(i)
<_{\alpha} Y(i)$.
\item For $\alpha$ a limit ordinal with cofinal sequence $(\alpha_n)_{n\in\omega}$, we set
$X <_{\alpha} Y$ if and only if $\dom(X) \sqsubsetneq \dom(Y)$ and for every
$i\in\dom(Y)$ we have $X(i) <_{\alpha_i} Y(i)$.
\end{itemize}
\end{definition}

We let as usual $(\fin^{\alpha})^+$ denote the co-ideal.

\medskip

The $\alpha$-dimensional forcing notion is now defined as follows:
\begin{definition} Let $(\fin^{\alpha})^{++}$ denote the set of $A \subseteq M^\alpha$
such that for every $\vec n\in \dom_\alpha(A)$ we have $A(\vec n) \notin
\fin^{\delta_{\alpha}(\vec n)}$.
Conditions of $\M_{\alpha}$ are pairs $(a,A)$ where
\begin{enumerate}[label=(\alph*)]
\item $a \subseteq M^{\alpha}$ is finite;
\item\label{r.forcing.alpha} $A \in (\fin^{\alpha})^{++}$;
\item $a <_{\alpha} A$.
\end{enumerate}
We let $(a',A') \leq (a,A)$ if and only if $A'\subseteq A$ and $a \initseg_{\alpha} a'
\subseteq a \cup A$.

\medskip
For the general case, define $\fin(\vec \phi)^{++}$  to be the set of $A \subseteq M^\alpha$ such that
such that for every $\vec n\in \dom_\alpha(A)$ we have $A(\vec n) \notin \fin(\vec \phi\res \delta(\vec n)+1)$.
and replace \ref{r.forcing.alpha} by $A \in \fin(\phi)^{++}$ in the definition of $\M_\alpha$.
\end{definition}

Note that for any $\vec n \in \dom_{\alpha}(a)$, the pair $(a(\vec n), A(\vec n))$ is a
forcing condition in $\M_{\delta_{\alpha}(\vec n)}$. The pair $(\dom(a), \dom(A))$ is a
classical (1-dimensional) Mathias forcing condition. As before, we need a relativized
forcing notion:

\begin{definition}
If $\I^+$ is the co-ideal of an ideal $\I\supseteq\fin^{\alpha}$, then we write
$\I^{++}$ for $\I^{+} \cap (\fin^{\alpha})^{++}$ and we let
$$
\M_{\alpha}^{\I}=\{(a,A)\in \M_{\alpha} \mid A\in\I^{++}\}.
$$
\end{definition}

Note that if $\I=\fin^{\alpha}$ then $\M_{\alpha}^{\I}=\M_{\alpha}$. Note furthermore that
if $A \in \I^+$, then we can always find $B \subseteq A$ such that $B \in \I^{++}$.

\begin{notation}~
\begin{enumerate}
\item
For any $X \in M^{\alpha}$, we define the \textit{generalized infinity domain} by
$\dom_{\alpha}^{\infty}(X) = \{\vec n\in \dom_{\alpha}(X) \mid X(\vec n) \notin
\fin^{\delta_{\alpha}(\vec n)}\}$, and note that $A \in (\fin^{\alpha})^{++}$ if and only
if $\dom_{\alpha}(A) = \dom_{\alpha}^{\infty}(A)$.
\item
Given a filter $G$ on $\M_{\alpha}^{\mathcal I}$, let
$$
x_G=\bigcup\{a \mid(\exists A) (a,A)\in G\}
$$
We will see that for $\M^{\I}_\alpha$-generic $G$, $x_G\in (\fin^{\alpha})^{++}$ holds in $V[G]$.
\item
For a condition $p\in\M^{\I}_{\alpha}$, we write $(a(p), A(p))$ when we want to refer to
its components.
\item
For $(a,A) \in \M^{\I}_{\alpha}$ and $b \subseteq A$ finite, let
\begin{align*}
A/b = \bigcup_{\vec n \in \dom_{\alpha}(A)} A(\vec n) \setminus \big\{x \in M^{\delta_{\alpha}(\vec
n)} \mid \big(\exists \vec m \in b(\vec n)\big) \; x \leq_{lex} \vec m\big\}.
\end{align*}

\item
For $p\in\M^{\I}_{\alpha}$, we let $\M^{\I}_{\alpha}(\leq p) = \{q\in\M^{\I}_{\alpha}
\mid q \leq p\}$.
\end{enumerate}
\end{notation}
\begin{remark}
The definition of $A/b$ was made to guarantee $b <_\alpha A/b$.
Note that $\vec n \in A/b$ if and only if $\vec n \notin b$ and letting $\vec n \res l$ be the longest common initial segment of $\vec n$ with some element of $b$, then there is no $\vec m\in b$ with $\vec n \res l \sqsubseteq \vec m$ and $\vec n(l) < \vec m(l)$.
\end{remark}

\medskip

Following the same strategy as in previous sections, our main pursuit will be a generalization of the Main Proposition \ref{mprop.2D}.

\begin{remark}
Recall that in order to meaningfully talk about $\kappa$-Suslin sets in $\powerset(M^\alpha)$, we identify $M^\alpha$ with $\omega$ (via some fixed arbitrary bijection), sets with their characteristic functions, and in effect, $\powerset(M^\alpha)$ with $2^\omega$ (as described in Section~\ref{s.trees}).
\end{remark}

\begin{assumption}
For the remainder of this article, let $\mathcal J=\fin^\alpha$ where $\alpha \geq 2$ (or more generally, $\mathcal J= \fin(\vec \phi)$.
Suppose $\A \subseteq \powerset(M^\alpha)$ to be a
$\mathcal J$-almost disjoint family which is $\kappa$-Suslin. 
Moreover, fix a tree $T$ on $2\times \kappa$  
such that $\pi[T] = \A$. Finally, let $\I$ be the ideal generated by $\A \cup \fin^\alpha$.
We leave it to the reader to make trivial substitutions to adapt the proofs to the case of $\fin(\vec \phi)$-AD families, but do give details when the proofs differ substantially.
\end{assumption}
Although the proofs in this section work for $\fin(\vec \phi)$ as above we will of notational concern only consider the case where
$\phi_{\beta}$ is the counting measure for $0<\beta \leq \alpha$, i.e., where $\mathcal J=\fin^{\alpha}$. Whenever relevant, we either make an explicit comment or
the reader can substitute $\fin^\alpha$ by $\fin(\vec \phi)$ (but again, do not substitute for the word finite).

\medskip
\begin{mprop}\label{mprop.alphaD}
$\forces_{\M^{\I}_{\alpha}} (\forall y \in \pi[T]) \; y \cap x_{\dot G} \in \fin^{\alpha}$.
\end{mprop}
The Main Proposition will be proved in Section~\ref{s.alphaD.branch} below.
All our results about $\fin^\alpha$ from Theorem~\ref{t.intro.alphaD} follow from the Main Proposition~\ref{mprop.alphaD} as a corollary:
\begin{corollary}
Assuming the Main Proposition~\ref{mprop.alphaD}, Theorem~\ref{t.intro.alphaD} holds.
\end{corollary}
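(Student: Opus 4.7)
The plan is to mirror the three corresponding arguments from Section~\ref{s.1D} (namely Corollaries~\ref{cor.1D.analytic}, \ref{cor.1D.PD}, and \ref{cor.1D.AD}), substituting $\M^{\I}_{\alpha}$ for $\M^{\I}$ throughout and invoking Main Proposition~\ref{mprop.alphaD} in place of Main Proposition~\ref{mprop.1D}. The only ingredients needed beyond the Main Proposition are: absoluteness of the statement ``$x_{\dot G}$ is $\mathcal J$-almost disjoint from every member of $\pi[T]$'' (item~(\ref{catches}) of Lemma~\ref{l.absoluteness}, which applies to any Borel ideal and so to $\mathcal J = \fin^{\alpha}$ or $\fin(\vec\phi)$), the two inner model theory lemmas of Section~\ref{ss.inner.model.theory}, and Solovay's Basis Theorem (Fact~\ref{f.innnermodeltheory.reflection}).

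For part~(1), suppose $\A$ is an analytic infinite $\mathcal J$-MAD family; fix an $\omega$-Suslin tree $T$ with $\pi[T]=\A$ and let $\I$ be the ideal generated by $\A \cup \mathcal J$. By Main Proposition~\ref{mprop.alphaD}, $\forces_{\M^{\I}_{\alpha}} x_{\dot G} \in \mathcal J^{+}$ and $x_{\dot G}$ is $\mathcal J$-almost disjoint from every member of $\pi[T]$; since the latter statement is $\mathbf{\Pi}^1_2$ (in codes for $T$ and $x_{\dot G}$), Shoenfield absoluteness yields a real $x_G \in V$ in any generic extension witnessing the failure of maximality, contradicting that $\A$ was MAD (alternatively, apply absoluteness of the statement upward from $V$ to $V[G]$ to derive a contradiction directly from the existence of $x_G$).

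For part~(2), assuming $\PD$ and a projective $\mathcal J$-MAD family $\A$, apply Lemma~\ref{l.innnermodeltheory.PD} to obtain a model $M \models \ZFC$ containing a tree $T$ with $\pi[T]=\A$ and with $\powerset(\powerset(\omega))^{M}$ countable in $V$. Working inside $M$, form $\I$ generated by $\pi[T] \cup \mathcal J$ and the forcing $\P = \M^{\I}_{\alpha}$; by countability of $\powerset(\powerset(\omega))^{M}$ we may pick $r \in V$ which is $\P$-generic over $M$. Main Proposition~\ref{mprop.alphaD} applied in $M$ gives that $M[r] \models$ ``$r$ is $\mathcal J$-almost disjoint from every member of $\pi[T]$'', and Lemma~\ref{l.absoluteness}(\ref{catches}) transfers this to $V$, contradicting maximality. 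Part~(3) is handled analogously: assuming $V=\eL(\R)+\AD$, the existence of a $\mathcal J$-MAD family is $\mathbf{\Sigma}^2_1$, so by Fact~\ref{f.innnermodeltheory.reflection} there is a $\mathbf{\Sigma}^2_1$ such family, which by Lemma~\ref{l.innnermodeltheory.AD} admits a tree representation inside a model $M$ with $\powerset(\powerset(\omega))^M$ countable in $V$, and the argument of part~(2) goes through verbatim.

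The argument is not expected to present genuine obstacles, since this section is structured precisely so that the Main Proposition carries all the combinatorial content. The only point requiring minor attention is verifying that the ideal $\mathcal J = \fin^{\alpha}$ (respectively $\fin(\vec\phi)$) is Borel and that its Borel definition is absolute between $M$ (or $M[r]$) and $V$, so that the application of Lemma~\ref{l.absoluteness}(\ref{catches}) is legitimate; this is immediate from the recursive definition of $\fin^{\alpha}$ since $\alpha<\omega_1$ and the submeasures $\phi_\beta$ are coded by reals, so the relevant Borel code lies in every model under consideration.
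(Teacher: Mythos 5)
Your proof is correct and follows exactly the approach of the paper, which simply says to substitute $\M^{\I}_{\alpha}$ for $\M^{\I}$ in Corollaries~\ref{cor.1D.analytic}, \ref{cor.1D.PD}, and \ref{cor.1D.AD}; you spell out the details the paper leaves implicit. (One small slip in part~(1): the generic real $x_G$ is of course not in $V$; what Shoenfield absoluteness gives you is that, since ``$\exists y \,(y\in\mathcal J^+ \wedge y$ is $\mathcal J$-AD from every member of $\pi[T])$'' is $\Sigma^1_2$ and true in $V[G]$, it holds in $V$ with \emph{some} witness $y$---or, equivalently and as in the paper, that $\Pi^1_2$-maximality of $\pi[T]$ transfers up to $V[G]$ and is contradicted there by $x_G$.)
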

\begin{proof}
It suffices to replace $\M^{\I}$ by $\M^{\I}_\alpha$ in the proofs of Corollaries~\ref{cor.1D.analytic}, \ref{cor.1D.PD}, and \ref{cor.1D.AD} (just as we did in Corollary~\ref{cor.2D} in the two-dimensional case).
\end{proof}

\medskip

\subsection{Properties of the general higher-dimensional forcing}\label{s.alphaD.properties}
Before we prove the Main Proposition~\ref{mprop.alphaD} we collect the necessary facts about $\M^{\I}_\alpha$.

\begin{lemma}\label{l.alphaD.properties}~
\begin{enumerate}
\item\label{l.alphaD.properties.I} For any $A \in \mathcal I$, $\forces_{\M^{\I}_\alpha} x_{\dot G} \cap \check A \in \fin^\alpha$.
\item\label{decomposition.alphaD}
Let $k\in\omega$. 
The partial order $\M^{\I}_{\alpha +1}$ is
isomorphic to the product
$\M^{\I}_{\alpha + 1}(\leq (\emptyset,A))\times (\M_{\alpha})^k$, where $A=\{\vec n\in M^\alpha\mid \vec n(0)\geq k\}$, and by $(\M_{\alpha})^k$ we mean $k$-fold (side-by-side) product of $\alpha$-dimensional Mathias forcing $\M_{\alpha}$. 
If $\alpha$ is a limit ordinal, $\M^{\I}_{\alpha}$ is isomorphic to $\M^{\I}_{\alpha}(\leq(\emptyset,A))\times (\Pi_{i< k}\M_{\alpha_i})$.
\item\label{alphaD.large.x_G} $\forces_{\M^{\I}_\alpha} x_{\dot G}  \in (\fin^\alpha)^{++}$.
\end{enumerate}
\end{lemma}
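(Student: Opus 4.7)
The plan is to handle the three items in the natural order, modeled closely on the analogous Lemma~\ref{l.2D.properties} from the two-dimensional section. Items (\ref{l.alphaD.properties.I}) and (\ref{decomposition.alphaD}) are essentially structural and should go through with only notational changes, whereas item (\ref{alphaD.large.x_G}) calls for a transfinite induction on $\alpha$ that relies on (\ref{decomposition.alphaD}) and on Lemma~\ref{l.1D.properties}(\ref{1D.large.x_G}).

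For item (\ref{l.alphaD.properties.I}), I would give a direct density argument. Given $A \in \mathcal I$ and a condition $p=(a(p),A(p))$, the set $A(p)\setminus A$ lies in $\mathcal I^+$ (since $A(p) \in \mathcal I^{++}\subseteq \mathcal I^+$), and by the remark following the definition of $\M^{\I}_\alpha$ it can be thinned to some $A''\in\mathcal I^{++}$ with $A''\subseteq A(p)\setminus A$. Then $(a(p),A'')\leq p$ forces $x_{\dot G}\cap\check A\subseteq a(p)$, which is finite, hence in $\fin^\alpha$. For item (\ref{decomposition.alphaD}), the plan is to write down the map
\[
\Phi\big((b,B),((c_i,C_i))_{i<k}\big) \;=\; \Big(\bigcup_{i<k}\{i\}\times c_i\,\cup\,b,\;\; \bigcup_{i<k}\{i\}\times C_i\,\cup\,B\Big)
\]
and verify from the definitions of $<_{\alpha+1}$ and $\initseg_{\alpha+1}$ (and their limit analogues, where $\M_\alpha$ is replaced by $\M_{\alpha_i}$ at slice $i$) that $\Phi$ is an order-isomorphism. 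This step is just bookkeeping; that $A^*\in\mathcal I^{++}$ follows because the $C_i$ are $(\fin^\alpha)^{++}$ by virtue of each $(c_i,C_i)$ being a Mathias condition and $B\in\mathcal I^{++}$ by assumption.

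For item (\ref{alphaD.large.x_G}), I intend a transfinite induction on $\alpha$, with Lemma~\ref{l.1D.properties}(\ref{1D.large.x_G}) as the base case. I must show that $\forces x_{\dot G}(\vec n)\notin\fin^{\delta_\alpha(\vec n)}$ for every $\vec n\in\dom_\alpha(x_{\dot G})$. Non-empty $\vec n$ I would handle by decomposing the forcing via item (\ref{decomposition.alphaD}) at any $k>\vec n(0)$: under the isomorphism $x_{\dot G}(\vec n(0))$ is precisely the generic of the slice-factor, which is $\M_{\alpha-1}$ (successor case) or $\M_{\alpha_{\vec n(0)}}$ (limit case). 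The inductive hypothesis gives that this slice-generic lies in the appropriate $(\fin^{\beta})^{++}$, from which the desired positivity of $x_{\dot G}(\vec n)$ follows by reading off the relevant restriction. The case $\vec n=\emptyset$ reduces, through the recursive definition of $\fin^\alpha$ together with the non-empty case just handled, to showing $\dom(x_{\dot G})\notin\fin(\phi_\alpha)$; this is a density argument in the spirit of Lemma~\ref{l.2D.properties}(\ref{2D.large.x_G}), exploiting the lower semicontinuity of $\phi_\alpha$ to extend $a(p)$ so that $\phi_\alpha(\dom(a(p)))$ grows past any prescribed $n$.

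The main obstacle I anticipate is in item (\ref{alphaD.large.x_G}): one must chase the inductive hypothesis through the decomposition of (\ref{decomposition.alphaD}) while tracking which slices belong to which factor and how the ordinals $\delta_\alpha(\vec n)$ descend along domain sequences. Handling the general ideals $\fin(\vec\phi)$ in a uniform way adds bookkeeping in the density argument for $\dom(x_{\dot G})$, but introduces no genuinely new idea beyond the lsc trick already used in the two-dimensional case.
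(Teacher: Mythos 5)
Your proposal is correct and follows essentially the same route as the paper, which disposes of item (\ref{l.alphaD.properties.I}) as ``an obvious density argument,'' of item (\ref{decomposition.alphaD}) by writing down exactly the map $\Phi$ you describe, and of item (\ref{alphaD.large.x_G}) by saying it follows ``by induction, slightly adapting the general case of the proof of Lemma~\ref{l.2D.properties}(\ref{2D.large.x_G})''; your account merely fills in the details the paper leaves to the reader. The transfinite induction you sketch for (\ref{alphaD.large.x_G})---decomposing at $k > \vec n(0)$ via (\ref{decomposition.alphaD}) and invoking the inductive hypothesis on the unrelativized slice factor, with a lower-semicontinuity density argument at the root---is precisely the intended adaptation of the two-dimensional argument.
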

\begin{proof}
(\ref{l.alphaD.properties.I}) Follows by an obvious density argument.  

\medskip
(\ref{decomposition.alphaD}) First we consider the  successor case. 
Define a map 
\[
\phi\colon \M^{\I}_{\alpha + 1}(\leq (\emptyset,A))\times (\M_{\alpha})^k  \to \M_{\alpha + 1}^{\I}
\]
by
\begin{align*}
\big((b,B),(c_i,C_i)_{i < k}\big) \mapsto
\Big(\bigcup_{i < k} \{i\}\times c_i \cup b, \bigcup_{i < k}\{i\}\times C_i \cup  B \Big).
\end{align*}
For a limit ordinal $\alpha$ the map can be defined in exactly the same way.
Both of these maps are easily seen to be bijective and order preserving.

\medskip
(\ref{alphaD.large.x_G}) This is shown easily by induction, slightly adapting the general case of the proof of \ref{l.2D.properties}(\ref{2D.large.x_G}). We leave this to the reader.
\end{proof}

We shall need a more sophisticated way of decomposing the forcing as a product.

\medskip

Towards this, let us regard $M^\alpha$ and $a$ as trees, ordered by the initial segment relation $\sqsubseteq$.
Given $\vec n \in A$, let us see how we can characterize the ``type'' of $\vec n$ in relation to $a$ with respect to $\leq_{lex}$.

First note that since $a\sqsubseteq_\alpha A$ it is enough to characterize the type of $\vec n\res(\lh(\vec n)-1)$ relative to
the following set of domain sequences
\[
a^*=\big\{\vec n' \res\big(\lh(\vec n')-1\big)\mid \vec n'\in a\big\}
\]
(for if $\vec n$ extends $\vec n^* \in a^*$, $\vec n' <_{lex} \vec n$ for every $\vec n' \in a$ which extends $\vec n^*$).

Let $\vec n_0, \hdots, \vec n_k$ enumerate $a^*$ in lexicographically increasing order,
and let $\vec n_i$ be lexicographically maximal in $a^*$ such that $\vec n_i \leq_{lex} \vec n$.
We then know by $a\sqsubseteq_\alpha A$ that $\vec n$ must have a longer initial segment in common with $\vec n_i$ than it does with $\vec n_{i+1}$, provided $i<k$.

\medskip

Let therefore $\vec m_i$ be the shortest initial segment  of $\vec n_i$ such that $\vec m_i <_{lex} \vec n_{i+1}$ for $i<k$, and let $\vec m_k=\emptyset$.
We have just seen that $\vec m_i \sqsubseteq \vec n$.
Moreover if $j<i$, $\vec m_j \not \sqsubseteq \vec n$ 
(for $\vec m_j <_{lex} \vec n_{j+1} \leq_{lex} \vec n_i$ and so $\vec m_j <_{lex} \vec n$).

\medskip

We have thus shown the following lemma:
\begin{lemma}\label{l.disjoint.union}
Suppose $(a,A) \in \M^\I_\alpha$. Let $\vec n_0, \hdots, \vec n_k$ enumerate 
\[
a^*=\big\{\vec n' \res\big(\lh(\vec n')-1\big)\mid \vec n'\in a\big\}
\] 
in lexicographically ascending order, let $\vec m_k=\emptyset$ and for $i<k$ let $\vec m_i$ be the shortest initial segment of $n_i$ such that $\vec m_i <_{lex} \vec n_{i+1}$ (just as above).

Then for each $\vec n\in A$ there is precisely one $i$ such that $\vec m_i \sqsubseteq \vec n$ and $\vec n_i \leq_{lex} \vec n$
(namely the maximal $i$ such that $\vec n_i \leq_{lex} \vec n$).
\end{lemma}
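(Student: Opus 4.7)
The plan is to show that $i^* := \max\{i \leq k \mid \vec n_i \leq_{lex} \vec n\}$ satisfies both conditions and is the unique such index. Existence of $i^*$ reduces to verifying $\vec n_0 \leq_{lex} \vec n$, which follows by induction on $\alpha$ from the structural consequence of $a <_\alpha A$ that $\dom(a) \sqsubsetneq \dom(A)$, so that $\min \dom(a) = \min \dom(A)$; descending recursively on coordinates, $\vec n$ either extends $\vec n_0$ as an initial segment or strictly lex-dominates it at some coordinate.

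The main step is to show $\vec m_{i^*} \sqsubseteq \vec n$. The case $i^* = k$ is immediate since $\vec m_k = \emptyset$. For $i^* < k$, let $l$ be the first position of disagreement between $\vec n_{i^*}$ and $\vec n_{i^*+1}$, and write $\vec q = \vec n_{i^*} \res l = \vec n_{i^*+1} \res l$, so that $\vec m_{i^*} = \vec q \conc (\vec n_{i^*}(l))$. Using $\vec n_{i^*} \leq_{lex} \vec n <_{lex} \vec n_{i^*+1}$, I would first verify that $\vec n$ agrees with $\vec n_{i^*}$ on positions $0,\ldots,l-1$: a first disagreement at $p<l$ (where $\vec n_{i^*}$ and $\vec n_{i^*+1}$ still agree) would propagate to put $\vec n$ strictly lex-above $\vec n_{i^*+1}$, a contradiction. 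The crux is then to argue $\vec n(l) = \vec n_{i^*}(l)$: since $\vec n_{i^*}$ and $\vec n_{i^*+1}$ are consecutive in $a^*$ and both extend $\vec q$, the values $\vec n_{i^*}(l)$ and $\vec n_{i^*+1}(l)$ are consecutive in $\dom(a(\vec q))$; coupled with $\dom(a(\vec q)) \sqsubsetneq \dom(A(\vec q))$ inherited from $a <_\alpha A$, no strictly intermediate value is allowed for $\vec n(l) \in \dom(A(\vec q))$. The only remaining alternative $\vec n(l) = \vec n_{i^*+1}(l)$ is excluded by iterating the same argument at the deeper prefix $\vec q \conc (\vec n_{i^*+1}(l))$, eventually forcing $\vec n_{i^*+1} \sqsubseteq \vec n$ and hence $\vec n_{i^*+1} \leq_{lex} \vec n$, contradicting the maximality of $i^*$.

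Uniqueness is then straightforward. For $j > i^*$, $\vec n_j \not\leq_{lex} \vec n$ by the choice of $i^*$. For $j < i^*$, suppose $\vec m_j \sqsubseteq \vec n$; since $\vec m_j$ is by construction not itself an initial segment of $\vec n_{j+1}$ while $\vec m_j <_{lex} \vec n_{j+1}$, the sequence $\vec n$ inherits from $\vec m_j$ a first disagreement with $\vec n_{j+1}$ at which $\vec n$ is smaller, giving $\vec n <_{lex} \vec n_{j+1} \leq_{lex} \vec n_{i^*} \leq_{lex} \vec n$, a contradiction. The principal obstacle is executing the recursive descent cleanly in the central step, propagating the constraint $\dom(a(\vec q')) \sqsubsetneq \dom(A(\vec q'))$ at ever-deeper common prefixes $\vec q'$ of $\vec n$ and $\vec n_{i^*+1}$; this requires particular care at limit ordinals $\alpha$, where elements of $a^*$ may have varying length determined by the fixed cofinal sequence.
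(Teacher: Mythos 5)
Your proof is correct and takes the same approach as the paper: take $i^*$ maximal with $\vec n_{i^*}\leq_{lex}\vec n$, show $\vec m_{i^*}\sqsubseteq\vec n$, and rule out smaller $j$ via $\vec m_j<_{lex}\vec n$. The paper's own argument is terser---it simply asserts (by appeal to $a\sqsubseteq_\alpha A$) that $\vec n$ shares a longer prefix with $\vec n_{i^*}$ than with $\vec n_{i^*+1}$---whereas you supply the justification via the consecutivity of $\vec n_{i^*}(l),\vec n_{i^*+1}(l)$ in $\dom(a(\vec q))$ together with $\dom(a(\vec q))\sqsubsetneq\dom(A(\vec q))$ and the recursive descent along deeper prefixes, which is exactly the missing detail.
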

Technical as the previous lemma may be, it allows us to decompose the forcing as a product in a very useful manner.
\begin{lemma}\label{l.decomposition'.alphaD}
Suppose $(a,A) \in \M^\I_\alpha$, and $\vec m_0, \hdots, \vec m_k$ and $\vec n_0, \hdots, \vec n_k$ are defined as in the previous lemma.
Then
$\M^\I_\alpha\big(\leq (a,A)\big)$ is isomorphic to 
\begin{equation}\label{e.product}
 \bigg(\prod_{i < k} \M_{\delta(\vec m_i)}\Big(\leq\big(a(\vec n_i),A_i(\vec m_i)\big)\Big) \bigg)\times \M^{\I}_{\alpha}\Big(\leq\big(a(\vec n_k),A_k\big)\Big) 
\end{equation}
where 
\[
A_i=A\cap\{\vec n\mid \vec m_i \sqsubseteq \vec n\land \vec n_i \leq_{lex} \vec n\}
\]
for each $i\leq k$.
\end{lemma}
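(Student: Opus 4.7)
The plan is to construct an explicit order-isomorphism $\Phi$ between $\M^{\I}_\alpha(\leq(a,A))$ and the product in~\eqref{e.product}. The starting point is the previous Lemma~\ref{l.disjoint.union}, which partitions $A=\bigsqcup_{i\leq k}A_i$, and via the same indexing yields a parallel partition $a=\bigsqcup_{i\leq k}a^{(i)}$ of the finite part: every terminal $\vec n'\in a$ extends a unique $\vec n_i$.

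Next I would introduce, for each $i\leq k$, an embedding $\iota_i$ from pairs of subsets of $M^{\delta(\vec m_i)}$ into pairs of subsets of $M^\alpha$ by prepending the initial segment $\vec m_i$ (for $i=k$ this is essentially the identity, since $\vec m_k=\emptyset$). I then define
\[
\Phi\bigl((p_i)_{i\leq k}\bigr)=\biggl(\bigcup_{i\leq k}\iota_i(a(p_i)),\;\bigcup_{i\leq k}\iota_i(A(p_i))\biggr),
\]
and check three things: \textbf{(i)} $\Phi$ lands in $\M^{\I}_\alpha(\leq(a,A))$---positivity of the big part along each domain sequence follows slot-by-slot from $A(p_i)\in(\fin^{\delta(\vec m_i)})^{++}$ when $i<k$ and from $A(p_k)\in\I^{++}$ in the last slot, while the global $\I^+$ condition is inherited from the last slot alone, and the relation $a\sqsubseteq_\alpha a'\subseteq a\cup A$ reduces to its slot-wise analogue; \textbf{(ii)} $\Phi$ is order-preserving in both directions, since the extension order on $\M^{\I}_\alpha$ restricts to the slot-wise order on the decomposition; \textbf{(iii)} $\Phi$ is surjective, because given any $(a',A')\leq(a,A)$, Lemma~\ref{l.disjoint.union} partitions both $A'\subseteq A$ and the new portion $a'\setminus a\subseteq A$ uniquely into the $k+1$ slots, yielding the preimage tuple.

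The main obstacle I anticipate is explaining the asymmetry between the first $k$ factors (classical $\M_{\delta(\vec m_i)}$, not relativized to $\I$) and the final factor (the relativized $\M^{\I}_\alpha$). This reflects the fact that the slots for $i<k$ are ``bounded'' by the lex-constraint above $\vec m_i$ between $\vec n_i$ and $\vec n_{i+1}$, so the union of any tuple of $++$-conditions automatically lies in $\I^{++}$ precisely because the final, unbounded slot already contributes a set in $\I^{++}$. Once the bookkeeping that matches dimensions via $\iota_i$ is done carefully---in particular verifying that $a(p_i)$, pulled up by $\iota_i$, produces valid $\alpha$-dimensional data compatible with the partition---the remaining checks are routine.
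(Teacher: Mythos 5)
Your proposal follows essentially the same route as the paper, which defines the isomorphism in the opposite direction: the paper's map sends $(b,B)\leq(a,A)$ to the tuple $\big((b\cap A_i)(\vec m_i),(B\cap A_i)(\vec m_i)\big)_{i\leq k}$, splitting a condition along the partition $A=\bigsqcup_{i\leq k}A_i$ from Lemma~\ref{l.disjoint.union}, whereas your $\Phi$ glues a tuple together by prepending $\vec m_i$; these are inverse descriptions of the same bijection. Your bookkeeping (explicitly checking that the finite parts recombine to sit above $a$, and that $\I^{++}$ is inherited from the final, unbounded slot) is a reasonable elaboration of what the paper leaves as ``straightforward to verify,'' so the content and the key ingredient (Lemma~\ref{l.disjoint.union}) coincide.
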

\begin{proof}
The crucial observation is that by Lemma~\ref{l.disjoint.union}, $A$ may be written as a disjoint union
\begin{equation}\label{e.partition}
A = 
\bigcup_{i\leq k} A_i
\end{equation}


\medskip

Define a map $\phi$ from $\M^\I_\alpha\big(\leq (a,A)\big)$ to the forcing in \eqref{e.product}
as follows: For $(b,B)\leq(a,A)$ define 
\begin{equation*}
\phi(b,B)=\big((b\cap A_i)(\vec m_i),(B\cap A_i)(\vec m_i)\big)_{i\leq k}.
\end{equation*}
Using the partition from \eqref{e.partition}, it is straightforward to verify that this map is an isomorphism of partial orders.
\end{proof}

Of course we also have a   
diagonalization
lemma (compare Lemmas~\ref{diagonalization.1D} resp.\ \ref{diagonalizing.A}) for $\M^{\I}_\alpha$.

\begin{lemma}\label{diagonalizing.A.alpha}
Let $(A_k)_{k\in\omega}$ be a sequence from $\I^{++}$ 
satisfying $A_{k+1} \subseteq A_k$ for every
$k\in\omega$. Then there is
$A_{\infty}\in \I^{++}$ such that $A_{\infty} \subseteq^*_{\fin^{\alpha}} A_k$ for every
$k\in\omega$.
\end{lemma}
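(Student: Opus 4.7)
The plan is to closely mirror the construction of the 2-dimensional version, Lemma~\ref{diagonalizing.A}, replacing $\fin\otimes\fin$ by $\fin^\alpha$ throughout and handling the deeper structure by induction on $\alpha$. The base case $\alpha=1$ is simply Lemma~\ref{diagonalization.1D}. For $\alpha\geq 2$ I would build (possibly terminating) sequences $(B_n)$ in $(\fin^\alpha)^{++}$ and $(C_n)$ in $\A$ satisfying the obvious analogues of the conditions from Lemma~\ref{diagonalizing.A}: $B_n\subseteq A_n$, $B_n\subseteq^*_{\fin^\alpha}A_k$ for every $k$, $B_n\cap C_i\in\fin^\alpha$ for $i<n$, and, if stage $n+1$ is reached, $C_n\in\A\setminus\{C_i:i<n\}$ with $B_n\cap C_n\in(\fin^\alpha)^+$. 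If at some stage one already has $B_n\in\I^+$, the construction halts and we set $A_\infty=B_n$, which lies in $\I^{++}$ since $B_n\in(\fin^\alpha)^{++}$ by construction.

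Constructing $B_n$ at a continuing stage is where the higher-dimensional structure requires attention. In the successor case $\alpha=\beta+1$, since $A_{n+k}\in\I^{++}$ and $\bigcup_{i<n}C_i\in\I$, the set $A_{n+k}\setminus\bigcup_{i<n}C_i$ lies in $\I^+\subseteq(\fin^\alpha)^+$; hence the collection of top-level indices $m$ with $(A_{n+k})(m)\setminus\bigcup_{i<n}C_i(m)\in(\fin^\beta)^+$ is infinite. I would pick distinct such indices $m^n_k$, one per $k$, and for each $k$ apply an auxiliary fact --- that every set in $(\fin^\beta)^+$ contains a subset in $(\fin^\beta)^{++}$, a straightforward induction on $\beta$ independent of the main induction --- to produce $B^{n,k}\subseteq(A_{n+k})(m^n_k)\setminus\bigcup_{i<n}C_i(m^n_k)$ with $B^{n,k}\in(\fin^\beta)^{++}$. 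Setting $B_n=\bigcup_k\{m^n_k\}\times B^{n,k}$ yields $B_n\in(\fin^\alpha)^{++}$, automatically gives $B_n\cap C_i=\emptyset$ for $i<n$, and the standard ``only finitely many columns can fail $\subseteq A_k$'' argument delivers $B_n\subseteq^*_{\fin^\alpha}A_k$ for every $k$. The limit case is handled analogously, picking $m^n_k$ so that the appropriate column at dimension $\alpha_{m^n_k}$ remains positive. If $B_n\in\I$ the construction continues: since $B_n$ is covered (modulo $\fin^\alpha$) by finitely many members of $\A$ and $B_n\cap C_i=\emptyset$ for $i<n$, one of those covering members must lie outside $\{C_0,\dots,C_{n-1}\}$ and meet $B_n$ in a set in $(\fin^\alpha)^+$; take it as $C_n$.

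If the construction never halts, set $A_\infty=\bigcup_n B_n^*$ where $B_n^*\subseteq B_n\cap C_n$ is extracted by once more invoking the auxiliary fact that $(\fin^\alpha)^+$ contains a $(\fin^\alpha)^{++}$ subset. Every $\vec m\in\dom_\alpha(A_\infty)$ is a proper initial segment of some terminal sequence living in some $B_n^*$, so $\vec m\in\dom_\alpha(B_n^*)$ and $A_\infty(\vec m)\supseteq B_n^*(\vec m)\notin\fin^{\delta(\vec m)}$; thus $A_\infty\in(\fin^\alpha)^{++}$. Since $A_\infty$ meets infinitely many distinct members of $\A$ in $(\fin^\alpha)^+$, also $A_\infty\in\I^+$, hence $A_\infty\in\I^{++}$. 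Finally $A_\infty\subseteq^*_{\fin^\alpha}A_k$ follows from $B_n\subseteq^*_{\fin^\alpha}A_k$ together with the fact that a finite union of sets in $\fin^\alpha$ is again in $\fin^\alpha$.

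The main technical hurdle is the systematic upgrade from $(\fin^\alpha)^+$ to $(\fin^\alpha)^{++}$: both in building each $B_n$ (where the raw difference $A_{n+k}\setminus\bigcup_{i<n}C_i$ need not be hereditarily positive, only positive at the top level) and in assembling $A_\infty$ from the sets $B_n\cap C_n$, which a priori only lie in $(\fin^\alpha)^+$. Both difficulties are dispatched by the same auxiliary extraction fact, combined with the elementary observation that the hereditary positivity of each piece is inherited by their union. The only cosmetic complication in carrying this out for general $\fin(\vec\phi)$ rather than $\fin^\alpha$ is keeping track of the submeasures $\phi_\beta$ when selecting the $m^n_k$, a substitution which parallels those already made in the previous sections.
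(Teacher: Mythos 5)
Your proposal is correct and, as you intend, follows the same approach as the paper, whose own proof consists solely of the remark that Lemma~\ref{diagonalizing.A} transcribes mechanically with $\fin\otimes\fin$ replaced by $\fin^\alpha$. The two places you single out as requiring care---extracting $(\fin^{\beta})^{++}$-subsets of the reduced slices $(A_{n+k})(m^n_k)\setminus\bigl(\bigcup_{i<n}C_i\bigr)(m^n_k)$, and extracting $(\fin^{\alpha})^{++}$-subsets $B^*_n\subseteq B_n\cap C_n$ before forming $A_\infty$---are precisely where a purely literal transcription would silently break down, and the extraction fact you invoke (every set in $(\fin^{\alpha})^{+}$ contains a subset in $(\fin^{\alpha})^{++}$) is the same device the paper records in the form ``if $A\in\I^+$ then some $B\subseteq A$ lies in $\I^{++}$'' immediately after introducing $\M^{\I}_\alpha$ in Section~\ref{s.alphaD}; so your write-up is a faithful and correct completion of the argument the authors leave to the reader.
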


\begin{proof}
The proof of Lemma~\ref{diagonalizing.A} can be transcribed completely mechanically by replacing $\fin\otimes\fin$ by $\fin^\alpha$ everywhere; we leave this to the reader.
\end{proof}

\medskip

\subsection{The Branch Lemma for general higher dimensions}\label{s.alphaD.branch}

The reader will find that our line of argumentation in this section is remarkably close to that of the previous section;
of course this is only true since the proofs there were written with the general case in mind.

Yet again, the crucial definition is that of an invariant tree, analogous to Definitions~\ref{d.1D.T^x} and \ref{d.2D.tree}.
\begin{definition}\label{d.alphaD.tree}
For $x \subseteq M^{\alpha}$, let
\[
T^x = \{t\in T\mid (\exists w \in \pi[T_{[t]}])\; w \cap x \notin \fin^\alpha\}
\]
\end{definition}
As in Sections~\ref{s.1D.branch} and \ref{s.2D.branch}, it is easy to see that whenever $x \Delta x' \in \fin^\alpha$, $T^x = T^{x'}$.
Moreover Facts~\ref{factsaboutTx}(\ref{factsaboutTx.start})--(\ref{factsaboutTx.end}) hold here as well.

\medskip

We are now ready to state the main lemma of this section.

\begin{blemma}\label{l.alphaD.branch}
$\forces_{\M_{\alpha}^{\I}} |\pi[T^{x_{\dot G}}]| \leq 1$.
\end{blemma}

In keeping with the pattern established in previous sections, we postpone the proof of the Branch Lemma and first give the proof of the Main Proposition~\ref{l.alphaD.branch}, assuming the lemma. 
The proof is verbatim the proof of Main Proposition \ref{mprop.2D} except that we use Lemma~\ref{l.alphaD.properties}(\ref{decomposition.alphaD}) to decompose the forcing; we repeat it for the incredulous reader.
\begin{proof}[Proof of Main Proposition \ref{mprop.alphaD}]
Suppose towards a contradiction that some $p_0\in \M^{\I}_\alpha$ forces that 
there is $A \in \pi[T]^{V[\dot G]}$ with $A \cap x_{\dot G} \notin \fin_\alpha$.
The Branch Lemma~\ref{l.alphaD.branch} lets us choose a name $\dot A$ so that $p_0 \forces \pi[T^{x_{\dot G}}] = \{\dot A\}$.

As in the proof of Main Proposition \ref{mprop.2D} on p.~\pageref{mprop.2D}, we show the following claim:
\begin{claim}\label{c.inV.dimalpha}
There is $q\in \M^{\I}_{\alpha}$ and $A'\in \pi[T]$ such that $q\forces\dot A=\check A'$.
\end{claim}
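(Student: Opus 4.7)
The plan is to argue \emph{verbatim} as in the proof of Claim~\ref{c.inV.dim2}, substituting Lemma~\ref{l.alphaD.properties}(\ref{decomposition.alphaD}) for Lemma~\ref{l.2D.properties}(\ref{decomposition.2D}) and Lemma~\ref{diagonalizing.A.alpha} for Lemma~\ref{diagonalizing.A}. First, it suffices to show that whenever $p \leq p_0$ decides ``$\vec m \in \dot A$'' for some $\vec m \in M^\alpha$, then already $(a(p_0), A(p))$ decides it. Granting this, one picks for each $\vec m \in M^\alpha$ some $A_{\vec m} \subseteq A(p_0)$ witnessing a decision, enumerates them as a descending sequence, and diagonalizes via Lemma~\ref{diagonalizing.A.alpha} to obtain a single $A_\infty$ making $q = (a(p_0), A_\infty)$ decide every instance simultaneously. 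The set of positive decisions gives the desired $A' \in V$ with $q \forces \dot A = \check A'$; the membership $A' \in \pi[T]$ then follows by absoluteness of $\pi[T]$ (Lemma~\ref{l.absoluteness}), since $q \forces \check{A}' \in \pi[T]$.

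For the reduction itself, suppose $p \forces \vec m \in \dot A$ (the complementary case is symmetric). Choose $k \in \omega$ large enough that $\vec n(0) < k$ for every $\vec n \in a(p)$, and let $G \ni (a(p_0), A(p))$ be $\M^\I_\alpha$-generic. Apply Lemma~\ref{l.alphaD.properties}(\ref{decomposition.alphaD}) to write $G = G_0 \times G_1$, where $G_0$ lives in the finite side-by-side product of lower-dimensional Mathias forcings over the first $k$ coordinates (i.e., $(\M_\beta)^k$ if $\alpha = \beta+1$, or $\prod_{i<k}\M_{\alpha_i}$ if $\alpha$ is a limit), and $G_1$ is $\M^\I_\alpha$-generic on the tail $\{\vec n \in M^\alpha \mid \vec n(0) \geq k\}$. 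Because $\dom(x_{G_0}) \subseteq k$ is finite, $x_{G_0} \in \fin^\alpha$ by the recursive definition of the ideal, and hence $x_G \Delta x_{G_1} \in \fin^\alpha$. Consequently $T^{x_G} = T^{x_{G_1}} \in V[G_1]$, and by the Branch Lemma~\ref{l.alphaD.branch} combined with Shoenfield-style absoluteness between $V[G]$ and $V[G_1]$, the interpretation $\dot A^G$ is the unique element of $\pi[T^{x_{G_1}}]$ and in particular lies in $V[G_1]$.

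The final step is to replace $G_0$ with some $G'_0$ (generic over $V[G_1]$) whose restriction to the first $k$ coordinates extends the finite stem of $p$---possible precisely because $k$ was chosen to dominate $\dom(a(p))$, so that the $P_k$-projection of $p$ is compatible with the $P_k$-projection of $(a(p_0),A(p))$. Setting $G' = G'_0 \times G_1$, we have $p \in G'$, so $\vec m \in \dot A^{G'}$; reapplying the absoluteness argument gives $\dot A^{G'} = \dot A^G$, whence $\vec m \in \dot A^G$. Since $G$ was an arbitrary generic containing $(a(p_0), A(p))$, this condition forces $\vec m \in \dot A$, completing the reduction. I expect the main obstacle to be verifying $x_G \Delta x_{G_1} \in \fin^\alpha$ uniformly across the successor and limit cases: this reduces to the observation that any subset of $M^\alpha$ with finite first-coordinate support automatically belongs to $\fin^\alpha$ (resp.\ $\fin(\vec\phi)$) by unwinding the ideal's definition at the outermost level, which is the exact analogue of the observation $x_{G_0} \in \fin\otimes\fin$ employed in dimension two.
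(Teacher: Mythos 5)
Your proposal is correct and follows essentially the same route as the paper: the reduction to showing $(a(p_0),A(p))$ decides $\vec m \in \dot A$ whenever $p$ does, the decomposition $G = G_0 \times G_1$ via Lemma~\ref{l.alphaD.properties}(\ref{decomposition.alphaD}) with $k$ chosen past $\dom(a(p))$, the $\fin^\alpha$-invariance of $T^x$ and absoluteness to pull $\dot A^G$ into $V[G_1]$, and the swap of $G_0$ for a $G'_0$ making $p$ generic — all of this matches the paper's argument. Your closing remark confirming $x_{G_0}\in\fin^\alpha$ explicitly (and hence $x_G \Delta x_{G_1}\in\fin^\alpha$) is a small but correct elaboration of a step the paper leaves implicit.
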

\begin{proof}[Proof of Claim.]
By the generalized diagonalization lemma (Lemma~\ref{diagonalizing.A.alpha}), it suffices to show that if $p \leq p_0$ and
$p$ decides $\vec n\in \dot A$ then in fact $(a(p_0),A(p))$ decides $\vec n\in\dot A$.

So let us assume $p \forces \vec n\in \dot A$ (if $p \forces \vec n\notin \dot A$ the proof is similar).
We must show that for an arbitrary  $\M^{\I}_{\alpha}$-generic $G$ with $(a(p_0),A(p))\in G$,
it holds that $\vec n\in \dot A^{G}$. 

Fix $k$ large enough so that $\dom(a(p))\subseteq k$.
By Lemma~\ref{l.alphaD.properties}(\ref{decomposition.alphaD}) we can decompose $G$ as $G_0 \times G_1$ where $G_0$ is generic for 
$\prod_{i< k}\M^{\fin^{\alpha_i}}_{\alpha_i}$
and $G_1$ is $\M^{\I}_\alpha$-generic. 
As $x_{G} \Delta x_{G_1} \in \fin^\alpha$, $T^{x_{G}} = T^{x_{G_1}} \in V[G_1]$. 

By absoluteness, $\dot A^{G} \in V[G_1]$ and 
$\pi[T^{x_{G}}]=\{\dot A^{G}\}$ holds in both $V[G]$ and $V[G_1]$.

Since $a(p) \subseteq \prod_{i< k} \{i\}\times M^{\alpha_i}$ we can find $G'_0$ which is $(\prod_{i< k}\M_{\alpha_i},V[G_1])$-generic over $V[G_1]$ so that letting $G'=G'_0 \times G_1$, 
$p \in G'$. 
Again by $\fin^\alpha$-invariance of $T^x$ and by absoluteness, $\dot A^{G'} \in V[G_1]$ and $\pi[T^{x_{G_1}}] = \{\dot A^{G'}\}$ and
so $\dot A^{G'}=\dot A^{G}$ and $\vec n\in\dot A^{G}$. 
\renewcommand{\qedsymbol}{{\tiny Claim \ref{c.inV.dimalpha}.} $\Box$}
\end{proof}
Just as in the proof of Main Proposition \ref{mprop.2D} we conclude that $A'\in \mathcal I$ by absoluteness while 
$q \forces x_{\dot G} \cap \check A' \notin\fin_\alpha$, contradicting Lemma~\ref{l.alphaD.properties}(\ref{l.alphaD.properties.I}).
\renewcommand{\qedsymbol}{{\tiny Main Proposition \ref{mprop.alphaD}.} $\Box$}
\end{proof}

\medskip

Gradually working towards a proof of the Branch Lemma~\ref{l.alphaD.branch},
we start by introducing some notation. Set
\begin{align*}
U = \{(a,t) \in \powerset(M^{\alpha}) \times T \mid a \text{ is finite}\}.
\end{align*}
For $\vec u \in U$, we will often write
$\vec u = (a(\vec u), t(\vec u))$. Define an
ordering $\leq_U$ on $U$ by
\begin{align*}
\vec u_1 \leq_U \vec u_0 \Leftrightarrow a(\vec u_1) \sqsupseteq_{\alpha} a(\vec u_0)
\land t(\vec u_1) \sqsupseteq t(\vec u_0).
\end{align*}

Assume for a moment that $G$ is $\M^{\I}_{\alpha}$-generic over $V$ and work in $V[G]$. For a fixed $x \in \powerset(M^{\alpha})$, define a set $U^x \subseteq U$ consisting of those pairs $(a,t) \in U$ such that there is $w \in [T_{[t]}]$ with
\begin{enumerate}
\item $\pi(w) \cap x_G \notin \fin^{\alpha}$,
\item $\dom_{\alpha}(a) \subseteq \dom_{\alpha}^{\infty}(x \cap \pi(w))$ and
\item $(\forall \vec n \in \dom_{\alpha}(a)) \; a(\vec n) \subseteq x(\vec n) \cap
\pi(w)(\vec n)$.
\end{enumerate}
Note that $U^x$ is closed under initial segments with respect to $\leq_U$, and that an
infinite chain through $U^x$ will give a set $A \in\pi[T]$ with a large
intersection with $x$, and a $(\fin_\alpha)^{++}$-subset of this intersection to witness its largeness in
a useful manner.

In analogy to trees, when $\vec u_0\in U$ we again write $U^{x}_{[\vec u_0]}$ for 
$\{\vec u\in U \mid \vec u_0 \leq \vec u\}$.

Finally working in $V$ again, we note the following about $U^{x_{\dot G}}$:

\begin{lemma} \label{exploringTx.alpha} Suppose $(a,A) \forces \vec u \in U^{x_{\dot G}}$.
\begin{enumerate}
\item \label{a(u)-subset.alpha} It holds that $a(\vec u) \subseteq a$ and moreover,
\begin{align*}
(a(\vec u), A) \forces \vec u \in U^{x_{\dot G}}.
\end{align*}
Similarly, if $(a,A) \forces \vec u' \notin U^{x_{\dot G}}$ and $a(\vec u') \subseteq a$ then
\begin{align*}
(a(\vec u), A) \forces \vec u' \notin U^{x_{\dot G}}.
\end{align*}

\item \label{I-subset.alpha} If $A'\subseteq^*_{\I} A$ such that
$(a,A') \in \M_{\alpha}^{\I}$, then also $(a,A') \forces \vec u \in U^{x_{\dot G}}$.

\item \label{infiniteExtension.terminal.alpha} The set
$A' \subseteq M^{\alpha}$ defined by
\begin{align*}
A'=\{\vec n \mid
  (\exists p' \leq (a,A))(\exists \vec u'\leq_U \vec u)\;
  \vec n\in a(\vec u') \land p'\forces \vec u' \in U^{x_{\dot G}}\}
\end{align*}
is not in $\I$.

\item \label{infiniteExtension.nonterminal.alpha} For a non-empty domain sequence $\vec n \in \dom_\alpha(a(\vec u))$, the set
$A_{\vec n} \subseteq M^{\delta_{\alpha}(\vec n)}$ defined by
\begin{align*}
A_{\vec n}=\{\vec m \mid
  (\exists p' \leq (a,A))(\exists \vec u'\leq_U \vec u)\;
  \vec m\in a(\vec u')(\vec n) \land p'\forces \vec u' \in U^{x_{\dot G}}\}
\end{align*}
is in $(\fin^{\delta_{\alpha}(\vec n)})^{+}$.
\end{enumerate}
\end{lemma}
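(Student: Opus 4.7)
My plan is to prove each item in turn, generalizing the proof of Lemma~\ref{exploringTx} from the two-dimensional case, with item (\ref{I-subset.alpha}) being a new general density fact needed in the higher-dimensional setting.

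For item (\ref{a(u)-subset.alpha}), the inclusion $a(\vec u) \subseteq a$ follows from clause (3) of the definition of $U^{x_{\dot G}}$, which demands $a(\vec u) \subseteq x_{\dot G}$, combined with the observation that any single terminal sequence in $A$ can be forced out of $x_{\dot G}$ by a suitable extension (since $A \in \I^{++}$ allows arbitrary thinning). For the \emph{moreover} clause, note first that $a(\vec u) \subseteq a$ and $a <_\alpha A$ together imply $a(\vec u) <_\alpha A$, so $(a(\vec u), A)$ is a valid condition. I will then establish an order isomorphism
\[
h\colon \M_\alpha^\I(\leq (a, A)) \to \M_\alpha^\I(\leq (a(\vec u), A))
\]
sending $(a \cup b, B) \mapsto (a(\vec u) \cup b, B)$ for finite $b \subseteq A$, analogous to Lemma~\ref{l.1D.properties}(\ref{isomorphicPO}). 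For corresponding generics $G$ and $h[G]$, $x_G$ and $x_{h[G]}$ differ only on the finite set $a \setminus a(\vec u)$ and both contain $a(\vec u)$, so $\vec u \in U^{x_G} \Leftrightarrow \vec u \in U^{x_{h[G]}}$; the two conditions thus force $\vec u \in U^{x_{\dot G}}$ equivalently. The case $\vec u' \notin U^{x_{\dot G}}$ is entirely analogous.

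Item (\ref{I-subset.alpha}) follows by a direct density argument. Given $A' \subseteq^*_\I A$, so $A' \setminus A \in \I$, any $(b, B) \leq (a, A')$ can be thinned to $(b, B')$ with $B' \in \I^{++}$ and $B' \subseteq B \cap A$, using that $B \cap A \in \I^+$ and every member of $\I^+$ contains a member of $\I^{++}$. Such refinements lie simultaneously below $(a, A')$ and $(a, A)$ and are dense below $(a, A')$, so $(a, A')$ inherits the forcing statement $\vec u \in U^{x_{\dot G}}$ from $(a, A)$.

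For items (\ref{infiniteExtension.terminal.alpha}) and (\ref{infiniteExtension.nonterminal.alpha}), I mirror the 2D proofs of Lemma~\ref{exploringTx}(\ref{infiniteDomain},\ref{infiniteExtensionInK}), arguing by contradiction. For (\ref{infiniteExtension.terminal.alpha}), suppose $A' \in \I$ and choose $B \in \I^{++}$ with $B \subseteq A \setminus A'$. By item (\ref{I-subset.alpha}), $(a, B) \forces \vec u \in U^{x_{\dot G}}$, providing a name $\dot w$ with $(a, B) \forces \dot w \in [T_{[t(\vec u)]}] \wedge \dot w \cap x_{\dot G} \notin \fin^\alpha$. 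Extend $(a, B)$ to some $p'$ forcing a terminal sequence $\vec m \in (\dot w \cap x_{\dot G}) \setminus a(\vec u)$ into $a(p')$; then some $\vec u' \leq_U \vec u$ witnesses $\vec m \in A'$, contradicting $\vec m \in B \subseteq A \setminus A'$. For (\ref{infiniteExtension.nonterminal.alpha}) the argument is identical, but we use $\vec n \in \dom_\alpha(a(\vec u)) \subseteq \dom_\alpha^\infty(\dot w \cap x_{\dot G})$ to extract a witness $\vec m$ at coordinate $\vec n$, with $B$ chosen so that $B(\vec n) \subseteq A(\vec n) \setminus A_{\vec n}$---this set lies in $(\fin^{\delta_\alpha(\vec n)})^{++}$ precisely when $A_{\vec n} \in \fin^{\delta_\alpha(\vec n)}$, the assumption to be refuted. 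The main obstacle I expect is verifying the \emph{moreover} clause of item (\ref{a(u)-subset.alpha}) carefully, as the definition of $U^x$ is not strictly $\fin^\alpha$-invariant in $x$: the clause $a(\vec u)(\vec n) \subseteq x(\vec n) \cap \pi(w)(\vec n)$ is a pointwise requirement. The required invariance nevertheless holds for the specific modifications arising from the isomorphism above, since only elements of $a \setminus a(\vec u)$ are swapped and $a(\vec u)$ remains in $x_G$ and $x_{h[G]}$ alike.
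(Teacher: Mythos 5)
Your proof follows essentially the same strategy as the paper's. Items (\ref{I-subset.alpha}), (\ref{infiniteExtension.terminal.alpha}), (\ref{infiniteExtension.nonterminal.alpha}) are in substance the paper's thinning/density arguments: the paper's proof of item (\ref{I-subset.alpha}) goes by contradiction rather than by your direct density claim, but the two formulations are the same; and your detour through item (\ref{I-subset.alpha}) in proving item (\ref{infiniteExtension.terminal.alpha}) is unnecessary, since $(a,B)\leq(a,A)$ holds directly once $B\subseteq A$, which is how the paper argues. The main divergence is item (\ref{a(u)-subset.alpha}): the paper dismisses both the inclusion and the \emph{moreover} clause as immediate from the definition, whereas you supply an isomorphism argument $h\colon\M_\alpha^\I(\leq(a,A))\to\M_\alpha^\I(\leq(a(\vec u),A))$ (in the spirit of Lemma~\ref{l.1D.properties}(\ref{isomorphicPO}) and Lemma~\ref{l.force.intersection.alpha}) and justify the invariance of $U^x$ under the resulting finite modification of $x_G$. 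That is a sound line of reasoning, but you should flag one step you assert without justification: \emph{``$a(\vec u)\subseteq a$ and $a<_\alpha A$ together imply $a(\vec u)<_\alpha A$''} is not automatic, because $<_\alpha$ demands $\dom(a(\vec u))\sqsubsetneq\dom(A)$ (a proper initial-segment condition, levelwise), and mere set containment $a(\vec u)\subseteq a$ does not force $\dom(a(\vec u))$ to be an initial segment of $\dom(a)$. One needs in addition that $a(\vec u)\sqsubseteq_\alpha a$; in the intended uses $\vec u$ is built up along $\leq_U$-chains, so this holds, but a careful writeup should make that restriction explicit. (The paper's ``immediate from the definition'' does not do better on this point.) The same concern applies to showing your $h$ maps conditions to conditions: you need $a(\vec u)\sqsubseteq_\alpha a(\vec u)\cup b$, which again requires the $\sqsubseteq_\alpha$-prefix structure, not merely containment.
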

\begin{proof}

(\ref{a(u)-subset.alpha}) Immediate from the definition of $U^{x_{\dot G}}$.

\medskip
(\ref{I-subset.alpha}) Suppose that $(a,A') \notforces \vec u \in U^{x_{\dot G}}$. Then
there is some $(b,B) \leq (a,A')$ such that $(b,B) \forces \vec u \notin U^{x_{\dot G}}$.
Since $A' \setminus A \in \I$, there is some $B' \subseteq B \cap A$ such that $B' \in
\I^{++}$. However, $(b,B') \leq (b,B)$ and $(b,B')\leq (a,A)$, which is a contradiction.

\medskip
(\ref{infiniteExtension.terminal.alpha}) 
Although the proof is practically identical to that of Lemma~\ref{exploringTx}(\ref{infiniteDomain}), we give the details for the reader's convenience. 
Assume to the contrary that $A' \in \mathcal I$. Then $A \setminus A' \in \I^+$, so take
$B \subseteq A\setminus A'$ such that $B\in \I^{++}$ and set $p = (a,B) \in \M^{\I}_\alpha$.
Since $p \forces \vec u \in U^{x_{\dot G}}$ we can find a name $\dot w$ such that
\[
p \forces \dot w \in \pi[T_{[t(\vec u)]}] \land \dot w\cap x_{\dot G} \notin \fin^\alpha.
\]
(As in Lemma~\ref{exploringTx}(\ref{infiniteDomain}) it would suffice if $p\forces T^{x_{\dot G}}\neq\emptyset$).
Thus we can extend $p$ to $p'$ to force some terminal sequence $\vec n$ into $\dot w\cap x_{\dot G}\setminus a(p)$.
But it has to be the case that $\vec n \in a(p')$. Whence $\vec n \in A'$ by definition of $A'$,
contradicting that also $\vec n \in B$  which is disjoint from $A'$.

\medskip
(\ref{infiniteExtension.nonterminal.alpha}) 
The proof is identical that of Lemma~\ref{exploringTx}(\ref{infiniteExtensionInK}) in essence, but differs substantially in notation.  
Assume to the contrary that $A_{\vec n} \in \fin^{\delta(\vec n)}$. 
Then we can find $p \leq (a,A)$ such that $A(p)(\vec n)$ is disjoint from  $A_{\vec n}$.
Since $p \forces \vec u \in U^{x_{\dot G}}$ we can find a name $\dot w$ such that
\[
p \forces \dot w \in \pi[T_{[t(\vec u)]}] \land  \dot w\cap x_{\dot G} \in (\fin^\alpha)^{+}.
\]
and 
\[
p \forces  \dom_\alpha(a(\vec u))\subseteq \dom_\alpha^\infty(\dot w\cap x_{\dot G}).
\]
Therefore $\vec n \in \dom_\alpha^\infty(\dot w\cap x_{\dot G})$ and we can extend $p$ to $p'$ to force a terminal sequence $\vec n \conc \vec n'$ into $\dot w\cap x_{\dot G}\setminus a(p)$.
But as in the proof of the previous item, it has to be the case that $\vec n\conc \vec n' \in a(p')$, whence $\vec n'  \in A_{\vec n}$ by definition of $A_{\vec n}$,
contradicting that also $\vec n' \in A(p')(\vec n)$ which is disjoint from $A_{\vec n}$.
\end{proof}

Define a set $\Gamma$ as follows:
\begin{align*}
\Gamma= \{(p,\vec u^0, \vec u^1) \in \M_{\alpha}^{\I} \mid
(\forall i \in \{0,1\}) p\forces \vec u^i \in U^{x_{\dot G}}\}.
\end{align*}
Define two orderings on $\Gamma$:
\begin{align*}
(p_1, \vec u_1^0, \vec u_1^1) \leq _{\Gamma} (p_0, \vec u_0^0, \vec u_0^1) \Leftrightarrow
p_1 \leq p_0 \land \vec u_1^i \leq_U \vec u_0^i
\end{align*}
for $i\in\{0,1\}$, and
\begin{align*}
(p_1, \vec u_1^0, \vec u_1^1) \prec _{\Gamma} (p_0, \vec u_0^0, \vec u_0^1) \Leftrightarrow
p_1 \leq p_0 \land \left[a(\vec u_0^0) \cap a(\vec u_0^1) \sqsubset_{\alpha} a(\vec u_1^0)\cap a(\vec
u_1^1)\right].
\end{align*}
Note that $\Gamma$ is well-founded with respect to the second ordering, $\prec_{\Gamma}$.
Indeed, suppose towards a contradiction that there is an infinite sequence $(p_0,\vec u_0^0,
u_0^1) \prec_{\Gamma} (p_1,\vec u_1^0, u_1^1) \prec_{\Gamma} \cdots$. Set
\begin{align*}
y^i = \bigcup_{n\in\omega} t(\vec u_n^i)
\end{align*}
for $i\in\{0,1\}$, and
\begin{align*}
A = \bigcup_{n\in\omega} a(\vec u_n^0) \cap a(\vec u_n^1).
\end{align*}
The sequence is $\prec_{\Gamma}$-decreasing and from $\Gamma$, hence $A \in
(\fin^{\alpha})^{++}$ and $A \subseteq \pi(y^0) \cap \pi(y^1)$, contradicting
$\fin^{\alpha}$-almost disjointness of $\pi[T]$.

The following is the analogue of Lemma~\ref{l.force.finite.bit}, saying that $U^{x_{\dot G}}$ can be approximated reasonably well in the ground model. 
Also as for Lemma~\ref{l.force.finite.bit}, a very similar proof shows that $\M^{\I}_\alpha$ is proper.
\begin{lemma}\label{l.force.finite.bit.alpha}
For each $\vec u_0 \in U$ the set $D(\vec u_0)$ is dense and open in
$\M^{\I}_{\alpha}$, where we define $D(\vec u_0)$ to be the set of $p\in \M^{\I}_{\alpha}$
such that for all $p' \leq p$ and any $\vec u \leq_U \vec u_0 \in U$,
\[
\left[\, p' \forces\, \vec u \in U^{x_{\dot G}}_{[\vec u_0]}\, \right] \Rightarrow
(a(p'),A(p) / a(p')) \forces (\exists t \in T) (a(\vec u),t) \in U^{x_{\dot G}}_{[\vec u_0]}.
\]
\end{lemma}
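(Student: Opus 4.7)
The plan is to adapt the proof of Lemma~\ref{l.force.finite.bit} to the $\alpha$-dimensional setting, using the generalized diagonalization Lemma~\ref{diagonalizing.A.alpha} and Lemma~\ref{exploringTx.alpha} (especially items~(\ref{infiniteExtension.terminal.alpha}) and~(\ref{infiniteExtension.nonterminal.alpha})) in place of their $2$-dimensional counterparts. Openness is immediate from the definition, so it suffices to show density. Fix $q_0 \in \M^{\I}_\alpha$; I will construct $q \leq q_0$ with $q \in D(\vec u_0)$.

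The construction mimics Lemma~\ref{diagonalizing.A.alpha}. I build sequences $(B_n)_{n<\beta}$ in $(\fin^\alpha)^{++}$ and $(C_n)_{n<\beta} \subseteq \A$ (with $C_n \notin \{C_i : i<n\}$, $\beta\leq\omega$) so that $B_n \cap C_n \in (\fin^\alpha)^+$ and $B_n \cap C_i \in \fin^\alpha$ for $i<n$, and such that $B_n \subseteq^*_{\fin^\alpha} A_k$ for the sets $A_k$ chosen along the way. Each $B_n$ is itself built in $\omega$ inner stages $b^0_n \sqsubset_\alpha b^1_n \sqsubset_\alpha \cdots$ with $B_n = \bigcup_k b^k_n$ (modifying $b^0_n$ and $B^0_n$ at the start of each $n$-th stage exactly as in Lemma~\ref{l.force.finite.bit} so as to peel off $\bigcup\{C_i : i<n\}$ outside of $\dom_\alpha(b^0_n)$). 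At the end of stage $(n,k)$, before passing to $b^{k+1}_n$, I enumerate the finitely many $a \subseteq b^k_n$ and, for each such $a$, consult the set
\[
A_{(a,\vec u^*)} = \big\{\vec n \in A(q_0) \setminus b^k_n \,\big|\, (\exists p'\leq(a,A(q_0)))(\exists\vec u\leq_U\vec u_0)\; a(\vec u)=a,\ \vec n \in a(p'),\ p'\forces\vec u\in U^{x_{\dot G}}_{[\vec u_0]}\big\}
\]
together with the analogous sets for non-terminal extensions; Lemma~\ref{exploringTx.alpha}(\ref{infiniteExtension.terminal.alpha})–(\ref{infiniteExtension.nonterminal.alpha}) guarantees these are large enough to thin $B^k_n$ to some $B^{*}\in \I^{++}$ with the property that whenever the antecedent is realized by such a pair $(a,\vec u)$, already $(a(p'),B^{*})\forces(a(\vec u),t')\in U^{x_{\dot G}}_{[\vec u_0]}$ for a suitable $t'\in T$. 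I then extend $b^k_n$ to $b^{k+1}_n$ with $b^k_n \sqsubset_\alpha b^{k+1}_n \subseteq b^k_n\cup B^{*}$ and set $B^{k+1}_n = B^{*}/b^{k+1}_n$. Finally, let $q = (a(q_0),B_\infty)$ where $B_\infty=\bigcup_n b^n_n$ (if the outer construction terminates at stage $n$ with $B_n\in\I^+$, take $B_n$ instead).

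To verify $q\in D(\vec u_0)$, let $p'\leq q$ and $\vec u\leq_U\vec u_0$ with $p'\forces\vec u\in U^{x_{\dot G}}_{[\vec u_0]}$. Since $a(p')$ is finite, there is $n$ with $a(p')\subseteq b^{n-1}_{n-1}$, so the pair $(a(p'),\vec u)$ was considered at some inner stage $(n,k)$ of the construction, yielding $t'\in T$ and $B^{*}$ with $(a(p'),B^{*})\forces(a(\vec u),t')\in U^{x_{\dot G}}_{[\vec u_0]}$; since $B_\infty\setminus b^k_n \subseteq B^{k+1}_n \subseteq B^{*}$, every condition below $(a(p'),A(q))$ is compatible with $(a(p'),B^{*})$, and applying Lemma~\ref{exploringTx.alpha}(\ref{I-subset.alpha}) transfers the forced statement to $(a(p'),A(q))$, as required. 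The main obstacle is, as in the $2$-dimensional case, the simultaneous bookkeeping of the outer diagonalization (which ensures $B_\infty\in\I^{++}$ via the $C_i$) and the inner enumeration over finite $a\subseteq b^k_n$; the definitions of $\sqsubset_\alpha$ and $(\fin^\alpha)^{++}$ have been set up precisely so that the thinning step preserves $\I^{++}$-ness uniformly across all levels of the Fubini iteration, and so that the final $B_\infty$ remains in $(\fin^\alpha)^{++}$ because each vertical $A(\vec n)$ is witnessed infinitely often by the telescoping inclusion $b^k_n\sqsubset_\alpha b^{k+1}_n$.
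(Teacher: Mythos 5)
Your overall plan is the right one and matches the paper's: the proof of Lemma~\ref{l.force.finite.bit.alpha} really is just the proof of Lemma~\ref{l.force.finite.bit} adapted, with a two-level diagonalization building $(B_n)$ and $(C_n)$ as in Lemma~\ref{diagonalizing.A.alpha}, an adjusted definition of $B^0_n$ that keeps everything above a domain sequence of $b^0_n$ while peeling off $\bigcup\{C_i : i<n\}$ elsewhere, and a final verification via compatibility with $B^k_n$. The openness remark, the diagonalization bookkeeping, the formula $B_\infty=\bigcup_n b^n_n$, and the closing argument all match what the paper does.

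However, your description of the thinning step (from $B^k_n$ to $B^*$) is not correct, and this is the step that actually carries the content of the lemma. The sets $A_{(a,\vec u^*)}$ you introduce do not appear in the argument and do not help: you quantify over $p'\leq(a,A(q_0))$ when the relevant conditions live below $(a(q_0),B^k_n)$, and you collect terminal sequences in $a(p')$ rather than in $a(\vec u')$. More importantly, Lemma~\ref{exploringTx.alpha}(\ref{infiniteExtension.terminal.alpha})--(\ref{infiniteExtension.nonterminal.alpha}) plays no role in the density argument at all --- those largeness facts are used in the Branch Lemma~\ref{l.alphaD.branch} to build the auxiliary condition $p^*$, not here. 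The thinning in Lemma~\ref{l.force.finite.bit} (and hence in the $\alpha$-dimensional version) is more direct and needs no largeness lemma: for each of the finitely many $a\subseteq b^k_n$ for which a witness $p'\leq (a(q_0),B^k_n)$ with $a(p')\subseteq b^k_n$ and $a(\vec u)=a$ exists, fix such a $p'$, record $t'=t(\vec u)$, and shrink the working set to $A(p')$; this keeps you in $\I^{++}$ automatically because $A(p')$ is the infinite part of a condition in $\M^\I_\alpha$, and then $(a(p'),B^*)\leq(a(p'),A(p'))=p'$ forces the desired statement by downward closure of forcing. Largeness of some auxiliary set is irrelevant; what matters is that $B^*$ is contained in $A(p')$ for each of the finitely many chosen witnesses. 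As written, your thinning step does not produce a $B^*$ with the property you assert, so the verification at the end would not go through without repairing this step.

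One further small point: the conclusion of the lemma is about the condition $(a(p'),A(p)/a(p'))$, not $(a(p'),A(q))$; the restriction to $A(p)/a(p')$ is what makes the pair an element of $\M^\I_\alpha$, and it should be used explicitly in the final step. Your appeal to Lemma~\ref{exploringTx.alpha}(\ref{I-subset.alpha}) there is also unnecessary: the compatibility argument (``every condition below $(a(p'),A(q)/a(p'))$ is compatible with $(a(p'),B^*)$'') already suffices, exactly as in the two-dimensional case.
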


\begin{proof}
The proof from the two-dimensional case, i.e., of Lemma~\ref{l.force.finite.bit} applies exactly as written once we make the following adaptations:
Firstly replace $\fin\otimes\fin$ by $\fin_\alpha$. 
Secondly, replace $\sqsubset_2$ by $\sqsubset_\alpha$. 
Thirdly, adapt the definition of $B^0_n$ as follows:
\[
B^0_n = \{\vec n\in B^{n-1}_{n-1}\mid (\exists \vec n' \in \dom_\alpha(b^0_n))\; \vec n' \subseteq \vec n \}\cup B^{n-1}_{n-1} \setminus \bigcup \{ C_i \setdef i<n\}.
\]
Then $(b^0_n, B^0_n) \in \M^{\I}_\alpha$, and $B^0_n \cap C_i \in \fin^\alpha$ for each $i<n$.
With these changes, the remainder of the argument for Lemma~\ref{l.force.finite.bit} applies verbatim.
\end{proof}

\medskip

The following technical lemma is crucial to the proof of the branch Lemma. It plays the same role as 
Lemma~\ref{l.1D.properties}(\ref{finitepartnegligible}) and Lemma~\ref{l.force.intersection},
giving us some freedom in tampering with the finite parts of conditions while maintaining that something is forced about $U^{x^{\dot G}}$.

\begin{lemma}\label{l.force.intersection.alpha}
Suppose we are given $\vec u \in U$, $(a,A)\in \M^{\I}_{\alpha}$, and $a' \subseteq a$ so that $(a',A/a') \in \M^{\I}_{\alpha}$, and so that the lexicographically maximal element of $a'$ and is also the lexicographically maximal element of $a$ 
Suppose further
$(a,A) \forces \vec u \in U^{x_{\dot G}}$.  Let $a''
\subseteq a' \cap a(\vec u)$ be arbitrary. Then
\begin{equation*}\label{e.shrink-to-a.alpha}
(a',A/a') \forces  (a'',t(\vec u)) \in U^{x_{\dot G}}.
\end{equation*}
\end{lemma}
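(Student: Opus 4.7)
The plan is to follow the proof of Lemma~\ref{l.force.intersection} from the two-dimensional case, using the product decomposition of Lemma~\ref{l.decomposition'.alphaD} in place of the straightforward column-wise decomposition used there. Fix a generic $G$ over $V$ with $(a', A/a') \in G$; we must show that $(a'', t(\vec u)) \in U^{x_G}$ holds in $V[G]$.

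Applying Lemma~\ref{l.decomposition'.alphaD} to both $(a, A)$ and $(a', A/a')$, the hypothesis that $a$ and $a'$ share the same lexicographic maximum forces the last pivot vector $\vec n_k$ in both decompositions to coincide, so the tail factor $\M^{\I}_{\alpha}$ appearing in each decomposition is essentially the same (differing only in finite parts). Pass to $V[G][H]$, where $H$ is mutually generic over $V[G]$ for a product of the extra finite-dimensional factors $\M_{\delta(\vec m_i)}$ occurring in the $(a, A)$-decomposition but not in the $(a', A/a')$-decomposition, chosen to contain the additional initial segments coming from $a \setminus a'$. Combining $G$ and $H$ via the decomposition isomorphism yields $G' \in V[G][H]$ which is $\M^{\I}_{\alpha}$-generic over $V$, contains $(a, A)$, and uses the same tail generic as $G$. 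Because each such extra factor satisfies $\delta(\vec m_i) < \alpha$, the symmetric difference $x_G \Delta x_{G'}$ is contained in a finite union of cones $\{\vec m\} \times M^\delta$ with $\delta < \alpha$; a routine induction on $\alpha$ shows that any subset of such a cone lies in $\fin^\alpha$, so $x_G \Delta x_{G'} \in \fin^\alpha$. Moreover, for each $\vec n \in \dom_\alpha(a'')$ the slice $(x_G \Delta x_{G'})(\vec n)$ lies in $\fin^{\delta(\vec n)}$.

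Using $(a, A) \forces \vec u \in U^{x_{\dot G}}$, in $V[G][H]$ we obtain $w \in [T_{[t(\vec u)]}]$ and $u \in (\fin^\alpha)^{++}$ witnessing $\vec u \in U^{x_{G'}}$. Setting $u' = u \cap x_G$, we have $a'' \subseteq a' \subseteq x_G$ and $a'' \subseteq a(\vec u) \subseteq u$, whence $a'' \subseteq u'$; and the slice-wise $\fin^{\delta(\vec n)}$-bound on $x_G \Delta x_{G'}$ transfers nontriviality of $(x_{G'} \cap \pi(w))(\vec n)$ to that of $(x_G \cap \pi(w))(\vec n)$ for each $\vec n \in \dom_\alpha(a'')$. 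Thus $(a'', t(\vec u), w, u')$ witnesses $(a'', t(\vec u)) \in U^{x_G}$ in $V[G][H]$. The existence of such a witness is coded by ill-foundedness of an auxiliary tree in $V[G]$, and by absoluteness of well-foundedness between $V[G]$ and $V[G][H]$ (exactly as at the end of the proof of Lemma~\ref{l.force.intersection}) the same holds in $V[G]$.

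The principal technical obstacle is verifying the compatibility of the two product decompositions: one must show that the lex-max hypothesis genuinely aligns the tail factors of both decompositions so that the extra elements $a \setminus a'$ are absorbed entirely into lower-dimensional factors, and that the resulting cones of discrepancy contribute only $\fin^{\delta(\vec n)}$-sized slices at each $\vec n \in \dom_\alpha(a'')$ (thereby preserving the largeness conditions defining $U^{x_G}$).
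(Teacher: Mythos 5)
Your proposal is correct and follows essentially the same approach as the paper: decompose both $\M^{\I}_\alpha(\leq(a,A))$ and $\M^{\I}_\alpha(\leq(a',A/a'))$ as products via Lemma~\ref{l.decomposition'.alphaD}, use the shared lex-maximal element to align the tail factor $\P_\infty$, pass between generics for the two conditions by adjoining a mutually generic piece for the extra lower-dimensional factors, and finish by absoluteness of well-foundedness. Your explicit remark that $x_G \Delta x_{G'}$ has $\fin^{\delta(\vec n)}$-small slices at each $\vec n \in \dom_\alpha(a'')$ (so that the largeness requirement in clause (2) of the definition of $U^x$ transfers) is a welcome clarification of a step the paper's version handles more tersely when it says ``we may replace $x_G$ by $x_{G'}$.''
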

\begin{proof}
We will decompose the forcing as a product.
Let $\vec n_0, \hdots, \vec n_k$ and $\vec m_0, \hdots, \vec m_k$ be defined as in Lemma~\ref{l.disjoint.union}.
Then by Lemma~\ref{l.decomposition'.alphaD},
\begin{multline}\label{e.phi}
\M^\I_\alpha\big(\leq (a,A)\big) \cong 
\prod_{i < k} \M_{\delta(\vec m_i)}\Big(\leq\big(a(\vec n_i),A_i(\vec m_i)\big)\Big) \times\\ 
\M^{\I}_{\alpha}\Big(\leq\big(a(\vec n_k),A_k\big)\Big)
\end{multline}
with $A_i$ defined as in the lemma.

Let $D$ consist of those $i < k$ such that 
some element of $a'$ extends $\vec n_i$.
Then writing $A'=A/a'$, Lemma~\ref{l.decomposition'.alphaD} also gives us an isomorphism
\begin{multline}\label{e.phi'}
\M^\I_\alpha\big(\leq (a',A')\big) \cong 
\prod_{i \in D} \M_{\delta(\vec n_i)}\Big(\leq\big(a'(\vec n_i),A'_i(\vec m_i)\big)\Big) \times\\ 
\M^{\I}_{\alpha}\Big(\leq\big(a'(\vec n_k),A'_k\big)\Big)
\end{multline}
with $A'_i$ defined analogously as in the lemma.
We have $A'_i=A_i$ for each $i\in D\cup\{k\}$ and so it is easy to see---e.g., using Lemma~\ref{l.alphaD.properties}(\ref{decomposition.alphaD}), a finite induction, and Lemma~\ref{l.1D.properties}(\ref{isomorphicPO})---that
\begin{equation*}
\M_{\delta(\vec m_i)}\big(\leq(a(\vec n_i),A_i(\vec m_i))\big) \cong \M_{\delta(\vec m_i)}\big(\leq(a'(\vec n_i),A'_i(\vec m_i))\big)
\end{equation*}
for $i\in D$ and
\[
\M^{\I}_{\alpha}\Big(\leq\big(a(\vec n_k),A_k\big)\Big) \cong \M^{\I}_{\alpha}\Big(\leq\big(a'(\vec n_k),A'_k\big)\Big). 
\]
Write
\begin{align*}
\P_{+} =&\prod_{i\in D} \M_{\delta(\vec m_i)}\Big(\leq\big(a(\vec n_i),A_i(\vec m_i)\big)\Big), \\
\P_{-} =&\prod_{i\in D} \M_{\delta(\vec m_i)}\Big(\leq\big(a(\vec n_i),A_i(\vec m_i)\big)\Big), \\ 
\P'_{-}=&\prod_{i\in D} \M_{\delta(\vec m_i)}\Big(\leq\big(a'(\vec n_i),A'_i(\vec m_i)\big)\Big),\\
\P_{\infty} =& \M^{\I}_{\alpha}\Big(\leq\big(a(\vec n),A_k\big)\Big),\\
\P'_{\infty} =& \M^{\I}_{\alpha}\Big(\leq\big(a'(\vec n),A'_k\big)\Big). 
\end{align*}
noting that we have established
\begin{equation}\label{e.isomorphisms1}
\M^\I_\alpha\big(\leq (a',A')\big) \cong  
\P'_{-} \times \P'_\infty  \cong\\
\P_{-} \times \P_\infty
\end{equation}
and
\begin{equation}\label{e.isomorphisms2}
\M^\I_\alpha\big(\leq (a,A)\big) \cong  \P_{+} \times \P_{-} \times \P_\infty
\end{equation}

\medskip

Now finally, let $G'$ be $\M^{\I}_\alpha$-generic over $V$ with $(a',A') \in G'$. 
We must show $(a'',t(\vec u)) \in U^{x_{G'}}$.
Using \eqref{e.isomorphisms1} transform $G'$ into a $\P_{-}\times \P_\infty$ generic $H_{-}\times H_\infty$.
Find a $\P_{+}$-generic $H_{+}$ over $V[H_{-}][H_\infty]$ 
and
let $G$ be the $\M^\I_\alpha$-generic given by $H_{+}\times H_{-} \times H_\infty$ using \eqref{e.isomorphisms2}.
By construction $(a,A) \in G$, whence $(a(\vec u),t(\vec u)) \in U^{x_{G}}$.

\medskip

By definition of $U^{x_G}$ this means that in $V[G]$ we can find $w \in \pi[T_{[t(\vec u)]}]$ 
so that 
\begin{equation}\label{e.existsw.alphaD}
(\exists u \in (\fin^\alpha)^{++})\;a'' \subseteq u \subseteq \pi(w) \cap x_{G}.
\end{equation}
Since $a'' \subseteq x_{G'}$  and since $x_G\Delta x_{G'} \in \fin^\alpha$ we may replace $x_{G}$ by $x_{G'}$ in \eqref{e.existsw.alphaD}, and thus  
\begin{equation}\label{e.x-v.alphaD}
(\exists x \in \pi[T_{[t(\vec u)]}])(\exists u \in (\fin^\alpha)^{++})\; a'' \subseteq  u \subseteq \pi(x) \cap x_{G'}.
\end{equation}
Just as in the two-dimensional case (i.e., the proof of Lemma~\ref{l.force.intersection}) an absoluteness argument easily shows
that \eqref{e.x-v.alphaD} and hence $(a'',t(\vec u)) \in U^{x_{G'}}$ must hold in $V[G']$, proving $(a',A/a')\forces (a'',t(\vec u)) \in U^{x_{\dot G}}$.
\end{proof}

\medskip

After all these preparations, we are finally ready to prove our last and most general instance of the Branch Lemma.

\begin{proof}[Proof of the Branch Lemma \ref{l.alphaD.branch}]
Suppose towards a contradiction we have 
$p\in\M_{\alpha}^{\I}$ and a pair of $\M_{\alpha}^{\I}$-names $\dot
w^0$, $\dot w^1$ such that
\begin{align*}
p \forces (\forall i \in \{0,1\})\; \dot w^i \in \pi[T] \land x_{\dot G}\cap \dot w^i
\notin \fin^{\alpha}
\end{align*}
and $p\forces \dot w^0 \neq \dot w^1$. 
By definition of $\Gamma$ we may find $(p_0, \vec u^0_0, \vec
u^1_0) \in \Gamma$ such that $\pi(t(\vec u^0_0)) \neq \pi(t(\vec u^1_0))$.

\begin{claim}\label{c.alternative.alpha}
There is $(p_1, \vec u^0_1, \vec u^1_1)  \leq_\Gamma (p_0, \vec u^0_0, \vec u^1_0)$, 
a terminal sequence $\vec n^* \in a(\vec u_1^0) \cap a(\vec u_1^1)$, and numbers $l < \lh(\vec n^*)$ and
 and $k^* \in \omega$ such that for any $(p_2, \vec u^0_2, \vec u^1_2)\leq_\Gamma (p_1, \vec u^0_1, \vec u^1_1)$
 and any
\begin{align*}
\vec n \in a(\vec u_2^0) \cap a(\vec u_2^1)
\end{align*}
such that $\vec n^*\res l \sqsubseteq \vec n$, 
we have $\vec n (l) \leq k^*$.
\end{claim}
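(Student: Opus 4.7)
The plan is to argue by contradiction, exploiting the well-foundedness of $(\Gamma, \prec_\Gamma)$. Suppose the claim fails. Then for every $(p_1, \vec u^0_1, \vec u^1_1) \leq_\Gamma (p_0, \vec u^0_0, \vec u^1_0)$, every terminal sequence $\vec n^* \in a(\vec u^0_1) \cap a(\vec u^1_1)$, every $l < \lh(\vec n^*)$, and every $k^* \in \omega$, there exist $(p_2, \vec u^0_2, \vec u^1_2) \leq_\Gamma (p_1, \vec u^0_1, \vec u^1_1)$ and $\vec n \in a(\vec u^0_2) \cap a(\vec u^1_2)$ with $\vec n^* \res l \sqsubseteq \vec n$ and $\vec n(l) > k^*$. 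From this negation I will construct an infinite $\prec_\Gamma$-decreasing chain below $(p_0, \vec u^0_0, \vec u^1_0)$, yielding the desired contradiction.

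It suffices to show that every $(p, \vec u^0, \vec u^1) \leq_\Gamma (p_0, \vec u^0_0, \vec u^1_0)$ admits a $\prec_\Gamma$-extension. Set $C = a(\vec u^0) \cap a(\vec u^1)$; since $C$ is finite, it has only finitely many \emph{internal nodes}, i.e.\ proper initial segments of terminal sequences in $C$. Unwinding the recursive definition of $\sqsubset_\alpha$ (and its $\fin(\vec \phi)$ variant) shows that $C \sqsubset_\alpha C'$ is equivalent to the following combinatorial condition: at every internal node $v$ of $C$, the set of children of $v$ in $C'$ properly extends (respectively, has strictly larger submeasure than) the set of children of $v$ in $C$. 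Crucially, only \emph{existing} internal nodes of $C$ impose constraints; new nodes appearing in $C'$ need not be further refined.

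The extension is built by processing the internal nodes one at a time. Enumerate them as $v_0, \ldots, v_m$, and starting from $(p, \vec u^0, \vec u^1)$, iterate the following step for each $i \leq m$: apply the negation, taking $\vec n^* \in C$ any terminal sequence extending $v_i$, setting $l = \lh(v_i)$, and choosing $k^*$ large enough that the element $\vec n$ produced yields a genuinely new child of $v_i$ at coordinate $l$. In the general $\fin(\vec \phi)$ setting, this may need to be repeated finitely many times at each node in order to raise the relevant submeasure strictly; this is possible because the submeasure is lsc and the corresponding ideal is proper. After all internal nodes have been processed, the final triple $(q, \vec v^0, \vec v^1)$ satisfies $C \sqsubset_\alpha a(\vec v^0) \cap a(\vec v^1)$, hence $(q, \vec v^0, \vec v^1) \prec_\Gamma (p, \vec u^0, \vec u^1)$, as required.

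The main subtlety is to coherently combine the node-by-node growth supplied by the negation so as to realize the full recursive $\sqsubset_\alpha$-condition. The key observation allowing this to go through is that $\sqsubset_\alpha$ only constrains pre-existing internal nodes, so the iteration terminates in boundedly many steps. The argument otherwise follows the template of Claim~\ref{c.alternative} from the two-dimensional case, simply accounting for the tree structure of $M^\alpha$ in place of the rectangular structure of $\omega \times \omega$.
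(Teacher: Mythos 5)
Your proposal is correct and follows essentially the same route as the paper: assume the claim fails, use its negation to build a $\prec_\Gamma$-extension of any given triple below $(p_0,\vec u^0_0,\vec u^1_0)$ by growing the intersection $a(\vec u^0)\cap a(\vec u^1)$ at each relevant node in finitely many steps, then contradict the well-foundedness of $\prec_\Gamma$. Your reformulation of the task as ``add a new child at each pre-existing internal node of $C$'' is a cleaner way of organizing the same finite iteration that the paper runs over pairs $(\vec n, k)$ with $\vec n$ a terminal sequence of $C$ and $k<\lh(\vec n)$, and your observation that $\sqsubset_\alpha$ imposes no constraints on nodes introduced after $C$ is the (implicit) reason the paper's iteration terminates. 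You also flag, more explicitly than the paper does, that in the $\fin(\vec\phi)$ case one must repeat at a node until the submeasure of the children strictly increases; this is a genuine point the paper leaves to the reader, though your one-line justification (``lsc and proper'') is stated rather loosely and would need to be spelled out if one were being fully rigorous about that case.
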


\begin{proof}[Proof of Claim]
We show that if the claim fails, there is a $\prec_\Gamma$-descending sequence in $\Gamma$.
It suffices to show that any $(p_1, \vec u^0_1, \vec u^1_1) \leq_\Gamma
(p_0, \vec u^0_0, \vec u^1_0)$ has a $\prec_\Gamma$-extension.  So let $(p_1, \vec u^0_1,
\vec u^1_1)
\leq_\Gamma (p_0, \vec u^0_0, \vec u^1_0)$ be given.

Since the claim fails, given any terminal sequence $\vec n \in a(u_1^0)\cap a(u_1^1)$, any $k < \lh(\vec n)$, and 
 \[
(p, \vec u^0,\vec u^1)
\leq_\Gamma (p_1, \vec u^0_1, \vec u^1_1)
\]
we can form an extension
 \[
(q, \vec v^0,
\vec v^1)
\leq_\Gamma (p, \vec u^0, \vec u^1)
\]
such that there is $\vec n' \in a(v_1)\cap a(v_1)$
with $\vec n' \res k = \vec n \res k$ and $\vec n' (k) > \vec n(k)$.

In finitely many steps, construct a (finite) descending sequence 
\[
(p_1, \vec u^0_1, \vec u^1_1) \geq_\Gamma (p_2, \vec u^0_2, \vec u^1_2) \geq_\Gamma\hdots \geq_\Gamma (p_m, \vec u^0_m, \vec u^1_m), 
\]
at each step taking an extension of the previous element as just described. 
We can deal with each $\vec n \in a(\vec u_1^0)\cap a(\vec u_1^1)$ and each $k < \lh(\vec n)$,
so that at the end 
\[
a(\vec u_1^1)\cap a(\vec u_1^1) \sqsubset_\alpha a(\vec u_1^m)\cap a(\vec u_1^m)
\]
Thus we have found $(p_m, \vec u^0_m, \vec u^1_m) \prec_\Gamma (p_1, \vec u^0_1, \vec u^1_1)$.
\renewcommand{\qedsymbol}{{\tiny Claim \ref{c.alternative.alpha}.} $\Box$}
\end{proof}

Let $(p_1, \vec u^0_1, \vec u^1_1)  \leq_\Gamma (p_0,
\vec u^0_0, \vec u^1_0)$, $\vec n^* \in a(\vec u_1^0)\cap a(\vec
u_1^1)$, $l<\lh(\vec n^*)$ and $k^* \in \omega$ be as in the claim.
By Lemma \ref{l.force.finite.bit.alpha} and by replacing $p_1$ by a stronger condition if necessary, we may assume that $p_1 \in D(\vec u_1^0) \cap D(\vec u_1^1)$.  

\medskip

\textbf{Case 1:} Assume first that $l =0$.
Let $A' \subseteq M^{\alpha}$ be defined as in Lemma~\ref{exploringTx.alpha}(\ref{infiniteExtension.terminal.alpha}), namely
\begin{align*}
A'=\{\vec n \mid
  (\exists p' \leq p_1)(\exists \vec u'\leq_U \vec u^0_1)\;
  \vec n\in a(\vec u') \land p'\forces \vec u' \in U^{x_{\dot G}}\}
\end{align*}
By Lemma~\ref{exploringTx.alpha}(\ref{infiniteExtension.terminal.alpha}), $A'\in \mathcal I^{+}$. 
Find $A \subseteq A(p_1)$ 
such that $A \in \I^{++}$, and letting $k^{**}=\max(\dom(a(p_1)))$,
\[
A(p_1) \cap \big( \bigcup_{i \in \dom(p_1)}\{i \leq k^*\}\times M^{\delta(i)} \big) \subseteq A
\]
and 
\[
A \cap \big( \bigcup_{i>k^{**}}\{i\}\times M^{\delta(i)} \big) \subseteq A'.
\]
Letting $p^* = (a(p_1),A)$ we obtain a condition in $\M^{\I}_\alpha$ such that $p^* \leq p_1$. 
Since $p^* \forces \vec u^1_1 \in U^{x_{\dot G}}$, and
we can find $p \leq p^*$, $\vec u$, and $\vec n$ with $\vec n(0)  > k^*, k^{**}$ such that
\[
\vec n \in a(\vec u) \wedge p \forces  \vec u \in U^{x_{\dot G}}_{[\vec u^1_1]}.
\]
It follows that $\vec n\in A'$ and so by definition of $A'$ we can find $p' \leq p_1$
and $\vec u'$ such that
\[
\vec n  \in a(\vec u') \wedge p' \forces  \vec u' \in U^{x_{\dot G}}_{[\vec u^0_1]}.
\]
By extending $p,p'$ if necessary, we can assume that $a(p)$ and $a(p')$ have the same lexicographically maximal element.
As $p_1 \in D(\vec u^0_1) \cap D(\vec u^1_1)$ and $p,p' \leq p_1$,
we can find $\vec u^0$ and $\vec u^1$ such that
\[
\vec n \in a(\vec u^0) \wedge (a(p),A(p_1)/a(p)) \forces  \vec u^0 \in U^{x_{\dot G}}_{[\vec u^0_1]}
\]
and
\[
\vec n \in a(\vec u^1) \wedge (a(p'),A(p_1)/a(p')) \forces  \vec u^1 \in U^{x_{\dot
G}}_{[\vec u^1_1]}.
\]
Let $a = a(p) \cap a(p')$ (whose lexicographically maximal element is also that of $a(p)$ as well as that of $a(p')$).
For each $i\in \{0,1\}$ we have 
\[
a(\vec u_1^i) \cup
\{\vec n\} \subseteq a \subseteq a(p), a(p')
\]
and so
by Lemma~\ref{l.force.intersection.alpha}
\begin{align*}
\big(a, A(p_1)/a)\big) \forces (a(\vec u_1^i) \cup
\{\vec n\}, t(\vec u^i)) \in U^{x_{\dot G}}_{[\vec u_1^i]}
\end{align*}
for each $i\in\{0,1\}$. Letting 
\[
p_2 = \big(a, A(p_1)/a\big)
\]
and 
\[
u_2^i = (a(\vec u_1^i) \cup
\{\vec n\}, t(\vec u^i))
\]
for $i \in \{0,1\}$ we obtain
$(p_2, \vec u_2^0, \vec u_2^1) \leq_\Gamma (p_1, \vec u^0_1, \vec u^1_1)$ with $\vec n \in a(\vec u_2^0) \cap a(\vec u^2_1)$ and 
$\vec n(0) > k^*$, which contradicts the choice of $(p_1, \vec u_1^0, \vec u_1^1)$, $\vec n^*$, and $k^*$.

\medskip

\textbf{Case 2:}
In case $l > 0$, let $\vec n = \vec n^* \res l$ and consider the set $A_{\vec n}$ defined as in Lemma~\ref{exploringTx.alpha}(\ref{infiniteExtension.nonterminal.alpha}), namely
\begin{align*}
A_{\vec n}=\{\vec m \mid
  (\exists p \leq p_1)(\exists \vec u\leq_U \vec u^0_1)\;
  \vec m\in a(\vec u)(\vec n) \land p\forces \vec u \in U^{x_{\dot G}}\}.
\end{align*}
Lemma~\ref{exploringTx.alpha}(\ref{infiniteExtension.nonterminal.alpha}) ensures $A_{\vec
n} \in (\fin^{\delta_{\alpha}(\vec n)})^{+}$. 

Let $k^{**}=\max(\{\vec n(l)\mid\vec n\in a(p_1)\})$.
Find $A \subseteq A(p_1)$ 
such that $A \in (\fin^{\delta(\vec n)})^{++}$, 
\[
A(p_1) (\vec n) \cap \{ \vec m \in \omega^{<\omega} \mid \vec m(0) \leq k^{**}\} \subseteq A 
\]
and 
\[
A(\vec n) \cap \{\vec m \in \omega^{<\omega}\mid \vec m(0) > k^{**}\} \subseteq A_{\vec n}.
\]
By choice of $k^{**}$, letting $p^* = (a(p_1),A)$ we obtain a condition in $\M^{\I}_\alpha$, $p^* \leq p_1$. 

\medskip

Since $p^* \forces \vec u_1^1 \in
U^{x_{\dot G}}$ we can find
$p \leq p^*$ and $\vec u \leq_U \vec u_1^1$ such that there exists a terminal sequence $\vec
m \in a(\vec u)$ with $\vec m(l) > k^*, k^{**}$ and such that
\begin{align*}
p\forces \vec u \in T^{x_{\dot G}}_{[\vec u_1^1]}.
\end{align*}
By definition of $A$ we infer $\vec m = \vec n \conc \vec m'$ for some $\vec m'\in A_{\vec n}$, and so we can find $p'\leq p_1$ and $\vec u' \leq_U \vec u_1^0$ such that
\begin{align*}
\vec m \in a(\vec u')
       \land p'\forces \vec u' \in T^{x_{\dot G}}_{[\vec u_1^0]}.
\end{align*}
Using that $p_1 \in D(\vec u_1^0) \cap D(\vec u_1^1)$ and Lemma~\ref{l.force.intersection.alpha},
argue verbatim as in the previous case to construct
$(p_2, \vec u_2^0, \vec u_21) \leq_\Gamma$ with $\vec m \in a(\vec u_2^0) \cap a(\vec u_21)$.
Since
$\vec m(l) > k^*$, this contradicts the choice of $(p_1, \vec u_1^0, \vec u_1^1)$, $\vec n^*$, $l$, and $k^*$.
\renewcommand{\qedsymbol}{{\tiny Branch Lemma \ref{l.alphaD.branch}.} $\Box$}
\end{proof}


\medskip

\section{Postscript: A dichotomy for Borel ideals?}\label{s.postscript}

The results of this paper show that for $\mathcal J$ in  a rather vast class of Borel ideals in $\omega$, one can prove that there are no definable $\mathcal J$-MAD families, under suitable assumptions on either what definable means, or what background theory is adopted.

It is worth noting that it is \emph{not} the case that such a theorem is true for \emph{every} Borel ideal on $\omega$. Indeed, the ideal on $\omega\times\omega$, defined by
$$
\mathcal J=\{x\subseteq \omega\times \omega: (\forall n\in\omega) \{m: (n,m)\in x\}\text{ is finite}\}
$$
clearly admits the $\mathcal J$-MAD family, namely $\{\{i\}\times\omega: i\in\omega\}$.

It remains an interesting open problem if it is possible to characterize the Borel ideals for which an analogue of Theorem \ref{t.intro.alphaD} is true, and for which that type of theorem fails. In other words:

\begin{question}
Is there a dichotomy for Borel ideals on $\omega$ which characterizes when there are/are no definable MAD families with respect to a given Borel ideal?
\end{question}

We note that in the more general setting of finding maximal discrete sets for Borel graphs, a result of Horowitz and Shelah \cite{horowitz-shelah-graphs} shows there is no reasonable dichotomy. It is, however, not clear that the obstruction found there also apply to the special case of $\mathcal J$-mad families, when $\mathcal J$ is a Borel ideal.

\bibliographystyle{amsplain}
\bibliography{MADfamiliesunderPD}

\end{document}